\newtheorem{theorem}{Theorem}[section]
\newtheorem{definition}{Definition}[section]
\newtheorem{lemma}{Lemma}[section]
\newtheorem{proposition}{Proposition}[section]
\newtheorem{assumption}{Assumption}[section]
\newtheorem{corollary}[theorem]{Corollary}
\newtheorem{example}{Example}
\def\oR{\Bar{\R}}
\def\disp{\displaystyle}
\def\tto{\;{\lower 1pt \hbox{$\rightarrow$}}\kern-10pt
\hbox{\raise 2pt \hbox{$\rightarrow$}}\;}
\def\Bar{\overline}
\def\ve{\varepsilon}
\def\epsilon{\varepsilon}
\def\R{\Bbb R}
\def\N{\Bbb N}
\def\ox{\bar{x}}
\def\gph{\mbox{\rm gph}\,}
\def\dom{\mbox{\rm dom}\,}
\def\ph{\varphi}
\def\oR{\Bar{\R}}
\def\dd{\delta}
\def\O{\Omega}
\def\ph{\varphi}
\def\dd{\delta}
\newcommand{\revise}[1]{{\color{black}#1}}
\title{A Globally Convergent Proximal Newton-Type Method\\ in Nonsmooth Convex Optimization}
\author{Boris S. Mordukhovich \thanks{Corresponding author. Department of Mathematics, Wayne State University, Detroit, Michigan, USA ({\tt
boris@math.wayne.edu}). Research of this author was partly supported by the USA National Science Foundation under grants DMS-1512846 and DMS-1808978, by the USA Air Force Office of Scientific Research
grant \#15RT04, and by Australian Research Council under grant DP-190100555.}\and  Xiaoming Yuan\thanks{Department of Mathematics, The University of Hong Kong, Hong Kong, China ({\tt xmyuan@hku.hk}).
This author was supported by the General Research Fund 12302318 from the Hong Kong Research Grants Council.}\and Shangzhi Zeng\thanks{Department of Mathematics and Statistics, University of Victoria, Canada. ({\tt zengshangzhi@uvic.ca}). Research of this author was supported by the Pacific Institute for the Mathematical Sciences (PIMS).} \and Jin Zhang\thanks{Corresponding author. Department of Mathematics,
Southern University of Science and Technology, National Center for Applied Mathematics Shenzhen, Shenzhen, 518055, China ({\tt zhangj9@sustech.edu.cn}). Research of this author was supported by National Science Foundation of China 11971220, by Shenzhen
Science and Technology Program (No. RCYX20200714114700072), and by  the Stable Support Plan Program of Shenzhen Natural Science Fund (No. 20200925152128002)}}
\begin{document}

\maketitle

\noindent{\bf Abstract.} The paper proposes and justifies a new
algorithm of the proximal Newton type to solve a broad class of
nonsmooth composite convex optimization problems without strong
convexity assumptions. Based on advanced notions and techniques of
variational analysis, we establish implementable results on the
global convergence of the proposed algorithm as well as its local
convergence with superlinear and quadratic rates. For certain
structured problems, the obtained local convergence conditions do
not require the local Lipschitz continuity of the corresponding
Hessian mappings that is a crucial assumption used in the literature
to ensure a superlinear convergence of other algorithms of the
proximal Newton type. The conducted numerical experiments of solving
the $l_1$
regularized logistic regression model illustrate the possibility of applying the proposed algorithm to deal with practically important problems.\\
{\bf Key words.} Nonsmooth convex optimization, machine learning, proximal Newton methods, global and local convergence, metric subregularity\\
{\bf Mathematics Subject Classification (2000)} 90C25, 49M15, 49J53
\vspace*{-0.15in}

\section{Introduction}\vspace*{-0.05in}

In this paper we consider a class of optimization problems of the
following type: \begin{equation}\label{prob_ori_1}
\min_{x\in\mathbb{R}^n}~F(x):=f(x)+g(x), \end{equation} where both
functions $f,g\colon\R^n\to\oR:=(-\infty,\infty]$ are proper,
convex, and lower semicontinuous (l.s.c.), while being structurally
different from each other. Namely, $f$ is assumed to be twice
continuously differentiable with the Lipschitz continuous gradient
$\nabla f$ on its domain. On the other hand, $g$ is merely
continuous on its domain; see Assumption~\ref{assumptions_f} for the
precise formulations. It has been well recognized that model
\eqref{prob_ori_1}, known as a {\em composite convex optimization
problem}, frequently appears in a variety of applications including,
e.g., machine learning, signal processing, and statistics, where $f$
is a {\em loss function} and $g$ is a {\em regularizer}; we keep
this terminology here. Note that problem \eqref{prob_ori_1} contains
in fact implicit constraints written as $x\in\O:=\dom g$.

It is typical in applications that problems of type
\eqref{prob_ori_1} have a large size, which makes attractive to
compute their solutions by employing first-order algorithms such as
the {\em proximal gradient method} (PGM). Given each iterate $x^k$,
the PGM constructs a new $x^{k+1}$ by solving the following
optimization subproblem, which approximates the smooth function $f$
in \eqref{prob_ori_1} by the linear model:
\begin{equation}\label{lin} \min_{x\in\mathbb{R}^n}~l_k(x) +
\frac{1}{2t}\|x-x_k\|^2 \;\; \text{with} \;\; l_k(x):=f(x^k)+\nabla
f(x^k)^T(x-x^k)+g(x), \end{equation} where $^T$ indicates the matrix
transposition, and where $t > 0$ represents the step size of PGM. As
well known, the PGM applied to \eqref{prob_ori_1} generates a
sequence of iterates that converges at least sublinearly of rate
$O({1}/{k})$ (see, e.g., \cite{FISTA,Nesterov13_convex}) and
linearly with respect to the sequence of cost function
values---provided that $f$ is strongly convex; see e.g.,
\cite{Schmidt2011convergence}. Refined results on linear convergence
of the PGM are derived under various error bound conditions as in
\cite{LP-KL2017,LT-1992,FDM-ref1,Tseng2010approximation,FBconvex}.

When $f$ is a twice continuously differentiable function, it is
natural to expect algorithms having faster convergence rates by
exploiting the Hessian $\nabla^2f(x^k)$ of $f$ at each iterate $x^k$
and constructing the next iterate $x^{k+1}$ as a solution to the
following quadratic subproblem: \begin{equation}\label{subproblem}
\min_{x\in\mathbb{R}^n}~q_k(x):=f(x^k)+\nabla
f(x^k)^T(x-x^k)+\frac{1}{2}(x-x^k)^TH_k(x-x^k)+g(x), \end{equation}
where $H_k$ is an appropriate approximation of the Hessian
$\nabla^2f(x^k)$. Methods of this type to solve composite
optimization problems \eqref{prob_ori_1} are unified under the name
of {\em proximal Newton-type methods}; see, e.g., \cite{Lee2014}. To
the best of our knowledge, the origin of such methods to solve
nonsmooth composite optimization problems given in form
\eqref{prob_ori_1} can be traced back to the generalized proximal
point method developed by Fukushima and Mine \cite{Fukushima1981}
who in turn considered it as an extension of Rockafellar's proximal
point method \cite{roc} to find zeros of maximal monotone operators
and subgradient inclusions associated with convex functions. On the
other hand, the general scheme of {\em successive quadratic
approximations} to solve optimization-related problems is a common
idea of Newton-type and quasi-Newton methods; see the books
\cite{fp,Solo14} with their bibliographies. For particular
subclasses of composite problems \eqref{prob_ori_1}, the quadratic
approximation scheme \eqref{subproblem} contains special versions of
the proximal Newton-type methods known as GLMNET
\cite{Friedman2007}, newGLMNET \cite{Yuan2012}, QUIC
\cite{Hsieh2011}, the Newton-LASSO method \cite{Oztoprak2012}, the
projected Newton-type algorithms
\cite{Schmidt2011convergence,Schmidt2011}, etc.

Observe further that, due to the convexity of both functions $f$ and
$g$ with $f$ being smooth, problem \eqref{prob_ori_1} can be
equivalently written as the {\em generalized equation}
\begin{equation}\label{GE} 0\in\nabla f(x)+\partial g(x)
\end{equation} in the sense of Robinson \cite{rob}, where $\partial
g(x)$ is the subdifferential of $g$ at $x$, and where the used
subdifferential sum rule does not require any qualification
conditions due to the smoothness of $f$; see, e.g.,
\cite[Proposition~1.30]{m18}. Then subproblem \eqref{subproblem} for
constructing the new iterate $x^{k+1}$ in the proximal Newton method
for \eqref{GE} reduces to solving the following {\em partially
linearized} generalized equation at the given iterate $x^k$:
\begin{equation}\label{linearized_GE} 0\in\nabla
f(x^k)+H_k(x-x^k)+\partial g(x). \end{equation} Various results on
the local superlinear and quadratic convergence of iterative
sequences $\{x^k\}$ for \eqref{linearized_GE} are obtained in the
literature in the framework of quasi-Newton methods for generalized
equations under different kinds of regularity conditions imposed on
$\partial F$ from \eqref{prob_ori_1}; see, e.g., the books
\cite{Dontchev09,fp,Solo14} with the references and discussions
therein. In particular, Fischer \cite{fischer2002local} proposes an
iterative procedure to solve generalized equations and proves local
superlinear and quadratic convergence of iterates under a certain
Lipschitz stability property of the corresponding perturbed solution
map. More specifically, paper \cite{fischer2002local} develops a
quasi-Newton algorithm to solve \eqref{prob_ori_1} in the framework
of \eqref{linearized_GE} that exhibits a local superlinear/quadratic
convergence in the setting where $g$ is the indicator function of a
box constraint, and where $H_k$ in \eqref{subproblem} is taken as
the regularized Hessian $H_k:=\nabla^2 f(x^k)+\alpha_k I$ with
$\{\alpha_k\}$ being a positive vanishing sequence satisfying
certain conditions. The main assumptions of \cite{fischer2002local}
include the local Lipschitz continuity of the Hessian $\nabla^2f(x)$
and the upper Lipschitz continuity/calmness of the perturbed
solution map \eqref{prob_ori_1} at the points in question.

However, how to build a reasonable {\em globalization} of the local scheme given by \eqref{subproblem} has not been completely resolved yet. Various globalizations of the proximal Newton method can be
found in the literature, see, e.g., \cite{byrd2016inexact,Lee2014,Lee2019,Scheinberg2016}. Unfortunately, all these works require $f$ to be {\em strongly convex}. In particular, paper by Byrd et al.
\cite{byrd2016inexact}, which addresses the special case of problem \eqref{prob_ori_1} with $g:=\lambda\|x\|_1$ and $\lambda>0$, proposes implementable inexactness conditions and backtracking line search
procedures to design a globally convergent proximal Newton method, but the local superlinear and quadratic convergence  results therein are established under the strong convexity assumption on $f$. Quite
recently, in \cite{yue2019family}, the inexactness conditions and backtracking line search procedures of \cite{byrd2016inexact} is applied to develop a proximal Newton method for \eqref{prob_ori_1} with
proving its local convergence of superlinear and quadratic rates by using the Luo-Tseng error bound condition \cite{LT-1992} instead of the strong convexity assumption in \cite{byrd2016inexact}. However,
the convergence results in \cite{yue2019family} have a crucial flaw. To achieve a local quadratic convergence rate, the authors of \cite{yue2019family} require that parameters of their method satisfy a
certain condition involving the constant in the error bound, which is extremely challenging to estimate. Note to this end that the strong
convexity assumption has not been imposed by using some other Newton-type algorithms such as the one based on the forward-backward envelope (FBE), which is different from the proximal Newton-type method
developed below; see, e.g., \cite{TSP18} and the references therein.\vspace*{0.03in}

In this paper we design a new globally convergent proximal Newton-type algorithm to solve composite convex optimization problems of class \eqref{prob_ori_1} under the following {\em standing assumptions}
on the given data without requiring the strong convexity of the loss function $f$:

\begin{assumption}\label{assumptions_f} Impose the following properties of the loss function and the regularizer in \eqref{prob_ori_1}:
\begin{itemize}
\item[{\bf(i)}] Both functions $f,g:{\R^n}\rightarrow (-\infty,\infty]$ are proper, l.s.c., and convex.

\item[{\bf(ii)}] The \revise{effective} domain of the loss function $\dom f:=\{x~|~f(x)<\infty\}$ is open, and $f(x)$ is twice continuously differentiable on a closed set $\Omega\supset \dom f$.

\item[{\bf(iii)}] The regularizer $g(x)$ is continuous on its domain and $\varnothing \ne\dom g \subset\dom f$.

\item[{\bf(iv)}] The gradient $\nabla f(x)$ is Lipschitz continuous on a closed set $\Omega$ from {\rm(ii)} with Lipschitz constant $L_1>0$.

\item[{\bf(v)}] Problem \eqref{prob_ori_1} has a nonempty solution set ${\cal{X}^*}:=\arg\min_{x\in\R^n}F(x)$ with the optimal value $F^*$.
\end{itemize}
\end{assumption}

Basic convex analysis tells us that the imposed assumptions (ii) and
(iii) ensure the fulfillment of the subdifferential sum rule
$\partial F(x)=\nabla f(x)+\partial g(x)$ for all $x\in\dom g$; see,
e.g., \cite[Corollary~2.45]{mn}.\vspace*{0.05in}

Our {\em main contributions} can be summarized as follows:

\begin{itemize}
\item[\bf(1)]
We develop a {\em globally convergent} proximal Newton-type algorithm to solve \eqref{prob_ori_1} with an {\em implementable inexact condition} for subproblem \eqref{subproblem} and a new reasonable
{\em backtracking line search} strategy. Our line search procedure does not require any restrictive assumptions. It is shown in this way that if the subgradient
mapping $\partial F$ is {\em metrically subregular} at some limiting point of the iterative sequence, the backtracking line search procedure accepts a unit step size when the iterates are close to the
solution. Furthermore, we prove that the proposed proximal Newton-type algorithm exhibits a {\em local convergence} with the {\em quadratic convergence rate}. Numerical experiments are performed to solve
the {\em $l_1$ regularized logistic regression} problem that illustrate the efficiency of the proposed algorithm.

\item[\bf(2)] We establish novel local convergence results for the proposed algorithm under the {\em metric $q$-subregularity} assumption imposed on the subgradient mapping $\partial F$ for any
positive number $q >\frac{1}{2}$. If $q<1$, the obtained results require {\em less restrictive assumptions} in comparison with the case of metric subregularity $(q=1)$ to ensure a superlinear
convergence of iterates, while for $q>1$ we achieve a convergence rate that is {\em higher than quadratic}.

\item[\bf(3)]
When the loss function $f$ in \eqref{prob_ori_1} satisfies additional structural assumptions, we obtain a local superlinear convergence rate of our proposed algorithm {\em without imposing the Lipschitz
continuity} of the Hessian matrix $\nabla^2 f(x)$. The latter assumption is crucial for establishing a fast convergence of the previously known algorithms of the proximal Newton type.
\end{itemize}

The rest of the paper is organized as follows. Section~\ref{sec:2} briefly overviews the notions and results of variational analysis needed for the subsequent material. 
An concrete example is also given to justify our motivation of using metric subregularity instead of the Luo-Tseng error bound condition.
In Section~\ref{sec:3} we present
our proximal Newton-type algorithm and establish its global convergence. In Section~\ref{sec:5}, for the cases where $q\in(0,1]$ and $q>1$, we separately derive local fast convergence results under
the metric $q$-subregularity of $\partial F$. Specially, local superlinear and quadratic convergence of the proposed algorithm under metric subregularity are given. Section~\ref{sec:6} is devoted to
problem \eqref{prob_ori_1} with a certain structure of the loss function $f$ and establishes in this case a superlinear convergence of the proposed algorithm without the Lipschitz continuity of the loss
function Hessian. Finally, Section~\ref{sec:7} conducts and analyzes numerical experiments to solve the practically important $l_1$ regularized logistic regression problem by implementing the designed
proximal Newton-type method.\vspace*{-0.15in}

\section{Preliminaries from Variational Analysis}\label{sec:2}\vspace*{-0.05in}

Here we recall and discuss some material from variational analysis that is broadly used in what follows. The reader can find more details and references in the books
\cite{Dontchev09,m18,Rockafellar2009variational}.

Throughout the paper, we use the standard notation. Recall that $\R^n$ signifies an $n$-dimensional Euclidean space with the inner product $\langle\cdot,\cdot\rangle$ and the norm denoted by
$\|\cdot\|$, while the $1$-norm is signified by $\|\cdot\|_1$. For any matrix $A\in\R^{m\times n}$ we have $\|A\|:=\max_{x \ne 0}\frac{\|Ax\|}{\|x\|}$ with $\tilde{\sigma}_{\min}(A)$ standing for the
smallest nonzero singular value of $A$. The symbols $\mathbb{B}_{r}(x)$ and $\overline{\mathbb{B}}_{r}(x)$ denote the open and the closed Euclidean norm ball centered at $x$ with radius $r>0$,
respectively, while we use $\mathbb{B}$ and $\overline{\mathbb{B}}$ for the corresponding unit balls around the origin. Given a nonempty subset $\O\subset\R^n$, denote by ${\rm bd}\,\O$ its boundary and
consider the associated distance function ${\rm dist}(x;\O):=\inf\{\|x-y\|\,\big|\,y\in\O\}$ and the indicator function $\delta_{\O}(x)$ equal 0 if $x\in\O$ and $\infty$ otherwise. The graph of a
set-valued mapping/multifunction $\Psi\colon\R^n\rightrightarrows\R^m$ is given by
$\gph\Psi:=\{(x,\upsilon)\in\R^n\times\R^m~|~\upsilon\in\Psi(x)\}$, and the inverse to $\Psi$ is $\Psi^{-1}(\upsilon):=\{x\in\R^n~|~\upsilon\in\Psi(x)\}$.\vspace*{0.03in}

The following fundamental properties of set-valued mappings are employed in the paper to establish fast local convergence results for the proposed proximal Newton-type algorithm.

\begin{definition}\label{subreg} Let $\Psi\colon\R^n\tto\R^m$ be a set-valued mapping, let $(\ox,\bar{\upsilon})\in\gph\Psi$, and let $q>0$.
\begin{itemize}
\item[{\bf(i)}] We say that $\Psi$ is {\sc metrically $q$-subregular} at $(\ox,\bar{\upsilon})$ with modulus $\kappa>0$ if there is $\epsilon>0$ such that
\begin{equation}\label{subreg1}
{\rm dist}\big(x;\Psi^{-1}(\bar\upsilon)\big)\le\kappa\,{\rm dist}\big(\bar\upsilon;\Psi(x)\big)^q\;\mbox{ for all }\;x\in\mathbb{B}_{\epsilon}(\ox).
\end{equation}

\item[{\bf(ii)}] $\Psi$ is said to be {\sc metrically subregular} at $(\ox,\bar{\upsilon})$ if $q=1$ in \eqref{subreg1}.

\item[{\bf(iii)}] We say that $\Psi$ is {\sc strongly metrically $q$-subregular} at $(\ox,\bar{\upsilon})$ with modulus $\kappa>0$ if there exists $\epsilon>0$ such that
\begin{equation*}
\|x - \bar{x}\|\le\kappa\,{\rm dist}\big(\bar\upsilon;\Psi(x)\big)^q\;\mbox{ for all }\;x\in\mathbb{B}_{\epsilon}(\ox).
\end{equation*}
\end{itemize}
\end{definition}

The metric subregularity property has been well recognized and applied in variational analysis and optimization numerical aspects. The reader can find more information and references in
\cite{Dontchev09,m18} with the commentaries and the bibliographies therein. In this paper we employ metric subregularity of {\em subgradient mappings}, which form a remarkable class of multifunctions with
special properties. Various sufficient conditions and characterizations of this property of subgradient mappings are given in \cite{a,ag,dmn} in terms of certain second-order growth conditions imposed on
the function in question.

The metric $q$-subregularity of order $q\in(0,1)$, known also as {\em H\"older metric subregularity}, is much less investigated, while some verifiable conditions for the fulfillment of this property can
be found in, e.g., \cite{Gaydu2011,Li2012,zn}. Note that the H\"older metric subregularity is clearly a weaker assumption in comparison with the standard metric subregularity property.

The case of {\em higher-order metric subregularity} with $q>1$ in \eqref{subreg1} is largely open in the literature. One of the reasons for this is that the corresponding {\em metric $q$-regularity}
property with $q>1$ does not make sense, since it holds only for constant mappings. Nevertheless, it is shown in \cite{mordukhovich2015higher} that the higher-order metric subregularity is a useful
property in variational analysis and optimization. This property is characterized for subgradient mappings in \cite{mordukhovich2015higher} via a higher-order growth condition, and its {\em strong}
version from Definition~\ref{subreg}(iii) is applied therein to the convergence analysis of quasi-Newton methods for generalized equations.\vspace*{0.03in}

Next we consider the {\em proximal mapping}
\begin{equation}\label{prox-map}
\hbox{Prox}_{g}(u):={\rm argmin}\Big\{g(x)+\frac{1}{2}\|x-u\|^2\;\Big|\;x\in\R^n\Big\},\quad u\in\R^n,
\end{equation}
associated with a proper function $g\colon\R^n\to\oR$. A crucial role of proximal mappings has been well recognized not only in proximal Newton-type algorithms (see, e.g., \cite{byrd2016inexact,Lee2014}),
but also in other second-order methods of numerical optimization. In particular, we refer the reader to the very recent papers \cite{kmp,BorisEbrahim}, where the proximal mappings are used for designing
superlinearly convergent Newton-type algorithms to find tilt-stable local minimizers of nonconvex extended-real-valued functions and to solve subgradient inclusions in a large generality. If $g$ is
l.s.c.\  and convex, then the proximal mapping \eqref{prox-map} is single-valued and nonexpansive on $\R^n$, i.e., Lipschitz continuous with constant one; see, e.g.,
\cite[Theorem~12.12]{Rockafellar2009variational}.

It is important to emphasize that in many practical models of type \eqref{prob_ori_1} arising, in particular, in machine learning and statistics, the proximal mapping associated with the regularizer
term $g$ (e.g., when $g$ is the $l_1$-norm, the group Lasso regularizer, etc.) can be easily computed. This is the case of the $l_1$ regularized logistic regression problem in our applications developed
in Section~\ref{sec:7}.\vspace*{0.03in}

Having \eqref{prox-map}, define further the {\em prox-gradient mapping} associated with \eqref{prob_ori_1} by
\begin{equation}\label{prox-gra}
\mathcal{G}(x):=x-\mathrm{Prox}_{g}\big(x-\nabla f(x)\big),\quad x\in\R^n,
\end{equation}
and present some properties of \eqref{prox-gra} used in what follows. Note that $\mathcal{G}(x)$ is generally defined in terms of a positive parameter $L > 0$ as
$\mathcal{G}_L(x):= x-\mathrm{Prox}_{\frac{1}{L}g}\big(x-\frac{1}{L}\nabla f(x)\big)$. In order to concentrate on the main idea, we simply set $L = 1$ throughout this paper. All of the results in this
paper can be easily extended to the case with a given positive $L$. Thanks to the convexity, we have that $\mathcal{G}(x)=0$ if and only if $x\in\mathcal{X}^*$. The following proposition provides an
upper estimate for $\|\mathcal{G}(x)\|$ by $\mathrm{dist}(0;\nabla f(x)+\partial g(x))$ and shows that $\mathcal{G}(x)$ is Lipschitz continuous. It can be seen as a direct combination of
\cite[Theorem~3.5]{DL2016} and \cite[ Lemma~10.10]{FOM}.

\begin{proposition}\label{bound_rxk0}
Let $\nabla f$ be Lipschitz continuous with modulus $L_1$ on $\R^n$. Then we have the estimates
\begin{equation*}
\|\mathcal{G}(x)\| \le \mathrm{dist}(0;\nabla f(x)+\partial g(x)) \;\mbox{ for all }\;x \in \dom f,
\end{equation*}
\begin{equation*}
\|\mathcal{G}(x) - \mathcal{G}(y)\|\le(2+L_1)\|x - y\| \;\mbox{ for any }\;x, y \in \dom f.
\end{equation*}
\end{proposition}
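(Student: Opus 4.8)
The plan is to prove the two estimates separately, as they rely on distinct mechanisms. For the first inequality, the key observation is that the prox-gradient mapping $\mathcal{G}(x)$ encodes a specific subgradient of $g$ at the proximal point. Writing $p:=\mathrm{Prox}_g(x-\nabla f(x))$, the optimality condition for the proximal subproblem \eqref{prox-map} gives $x-\nabla f(x)-p\in\partial g(p)$, which after adding and subtracting yields a particular element of $\nabla f(p)+\partial g(p)$ expressed through $\mathcal{G}(x)=x-p$. The honest approach, however, is to exploit the nonexpansiveness of the proximal operator directly. Since $\mathrm{Prox}_g$ is firmly nonexpansive (Lipschitz constant one), and since $x\in\mathcal{X}^*$ precisely when $\mathcal{G}(x)=0$, one relates $\|\mathcal{G}(x)\|$ to the residual of the generalized equation \eqref{GE}. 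Concretely, for any $v\in\nabla f(x)+\partial g(x)$, I would show that $x-v$ is a point whose image under $\mathrm{Prox}_g$ stays close to $p$, and then apply nonexpansiveness to bound $\|\mathcal{G}(x)\|=\|x-p\|$ by $\|v\|$. Taking the infimum over all such $v$ delivers $\|\mathcal{G}(x)\|\le\mathrm{dist}(0;\nabla f(x)+\partial g(x))$. This is essentially the content of the cited \cite[Theorem~3.5]{DL2016}.

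**For the second inequality**, the strategy is a triangle-inequality decomposition combined with the two Lipschitz ingredients at hand. Using \eqref{prox-gra}, I would write
\begin{equation*}
\mathcal{G}(x)-\mathcal{G}(y)=(x-y)-\big[\mathrm{Prox}_g(x-\nabla f(x))-\mathrm{Prox}_g(y-\nabla f(y))\big].
\end{equation*}
Then the triangle inequality gives $\|\mathcal{G}(x)-\mathcal{G}(y)\|\le\|x-y\|+\|\mathrm{Prox}_g(x-\nabla f(x))-\mathrm{Prox}_g(y-\nabla f(y))\|$. Applying nonexpansiveness of $\mathrm{Prox}_g$ bounds the second term by $\|(x-\nabla f(x))-(y-\nabla f(y))\|\le\|x-y\|+\|\nabla f(x)-\nabla f(y)\|$, and invoking the Lipschitz continuity of $\nabla f$ with modulus $L_1$ bounds $\|\nabla f(x)-\nabla f(y)\|$ by $L_1\|x-y\|$. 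Summing the contributions $1+1+L_1$ yields exactly the constant $2+L_1$, matching \cite[Lemma~10.10]{FOM}.

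**The main obstacle** I anticipate is the first inequality rather than the second: the second is a routine chaining of Lipschitz bounds, whereas the first requires carefully producing the right witness point to feed into the nonexpansiveness argument, or equivalently handling the infimum defining the distance when $\partial g(x)$ is a genuine set rather than a singleton. One must verify that the subgradient extracted from the proximal optimality condition can be compared to an \emph{arbitrary} element of $\nabla f(x)+\partial g(x)$, which uses monotonicity of $\partial g$ in an essential way. I would therefore handle this via the monotonicity inequality $\langle u-u',\,p-p'\rangle\ge 0$ for subgradients, applied to the pair consisting of the prox-induced subgradient at $p$ and an arbitrary subgradient realizing the distance at $x$; this is where convexity of $g$ does the real work. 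Once that comparison is set up, the bound follows by a short computation, and the infimum over $\partial g(x)$ passes through cleanly.
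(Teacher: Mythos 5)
Your proposal is correct. Note that the paper does not actually prove this proposition: it is stated as a direct combination of \cite[Theorem~3.5]{DL2016} and \cite[Lemma~10.10]{FOM}, so there is no in-paper argument to compare against; what you have written is a valid self-contained proof supplying the content of those citations. For the first estimate, your monotonicity route works exactly as you describe: with $p:=\mathrm{Prox}_g(x-\nabla f(x))$ the optimality condition gives $\mathcal{G}(x)-\nabla f(x)\in\partial g(p)$, and pairing this against an arbitrary $v-\nabla f(x)\in\partial g(x)$ via $\langle u-u',p-x\rangle\ge 0$ yields $\langle v,\mathcal{G}(x)\rangle\ge\|\mathcal{G}(x)\|^2$, hence $\|\mathcal{G}(x)\|\le\|v\|$ by Cauchy--Schwarz, and the infimum over $v$ finishes it (the case $\partial g(x)=\varnothing$ is vacuous since the distance is then $+\infty$). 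An even shorter variant of your ``witness point'' idea: $v-\nabla f(x)\in\partial g(x)$ means precisely $x=\mathrm{Prox}_g(x+v-\nabla f(x))$, so nonexpansiveness gives $\|\mathcal{G}(x)\|=\|x-p\|\le\|v\|$ in one line. Your second argument is the routine $1+1+L_1$ chaining and is correct as written (only plain nonexpansiveness is needed, not firm nonexpansiveness).
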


The next proposition is a combination of \cite[Theorems~3.4 and 3.5]{DL2016}.

\begin{proposition}\label{bound_dist} Let $\nabla f$ be Lipschitz continuous with modulus $L_1$ around $\ox$, and let the mapping $\nabla f(x)+\partial g(x)$ be metrically subregular at $(\bar{x},0)$,
i.e., there exist numbers $\epsilon,\kappa>0$ such that
\[
\mathrm{dist}(x;\mathcal{X}^*)\le\kappa\,\mathrm{dist}\big(0;\nabla f(x)+\partial g(x)\big)\;\mbox{ for all }\;x\in\mathbb{B}_{\epsilon}(\bar{x}).
\]
Then whenever $x\in\mathbb{B}_{\epsilon}(\bar{x})$ we have the estimate
\[
\mathrm{dist}(x;\mathcal{X}^*)\le(1+\kappa)(1+L_1)\|\mathcal{G}(x)\|.
\]
\end{proposition}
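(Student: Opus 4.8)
The plan is to reduce the desired estimate to the metric subregularity hypothesis evaluated not at $x$ itself but at its proximal image $\hat{x}:=\mathrm{Prox}_{g}(x-\nabla f(x))$, for which $\mathcal{G}(x)=x-\hat{x}$ by the definition \eqref{prox-gra} of the prox-gradient mapping. First I would extract a usable subgradient from the optimality condition defining the proximal map. Since $\hat{x}$ minimizes $y\mapsto g(y)+\frac{1}{2}\|y-(x-\nabla f(x))\|^2$, the subdifferential Fermat rule gives $(x-\nabla f(x))-\hat{x}\in\partial g(\hat{x})$, and substituting $\hat{x}=x-\mathcal{G}(x)$ this rearranges to $\mathcal{G}(x)-\nabla f(x)\in\partial g(\hat{x})$.

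Next I would convert this membership into a bound on the stationarity residual $\mathrm{dist}(0;\nabla f(\hat{x})+\partial g(\hat{x}))$ at the proximal point. Adding $\nabla f(\hat{x})$ shows that $\nabla f(\hat{x})-\nabla f(x)+\mathcal{G}(x)$ is an element of $\nabla f(\hat{x})+\partial g(\hat{x})$, whence
\[
\mathrm{dist}\big(0;\nabla f(\hat{x})+\partial g(\hat{x})\big)\le\|\nabla f(\hat{x})-\nabla f(x)\|+\|\mathcal{G}(x)\|\le L_1\|\hat{x}-x\|+\|\mathcal{G}(x)\|=(1+L_1)\|\mathcal{G}(x)\|,
\]
where the middle inequality uses the Lipschitz continuity of $\nabla f$ and the final equality uses $\|\hat{x}-x\|=\|\mathcal{G}(x)\|$. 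This is the crux of the argument: it controls the residual at the proximal point \emph{linearly} by $\|\mathcal{G}(x)\|$, i.e.\ in the direction reverse to Proposition~\ref{bound_rxk0}.

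Finally I would apply metric subregularity at $\hat{x}$ to obtain $\mathrm{dist}(\hat{x};\mathcal{X}^*)\le\kappa(1+L_1)\|\mathcal{G}(x)\|$, and then invoke the triangle inequality together with $\|x-\hat{x}\|=\|\mathcal{G}(x)\|\le(1+L_1)\|\mathcal{G}(x)\|$ to get $\mathrm{dist}(x;\mathcal{X}^*)\le\|\mathcal{G}(x)\|+\kappa(1+L_1)\|\mathcal{G}(x)\|\le(1+\kappa)(1+L_1)\|\mathcal{G}(x)\|$, which is the claimed constant. The step I expect to require the most care is the neighborhood bookkeeping: subregularity is assumed only on $\mathbb{B}_{\epsilon}(\ox)$, yet it is applied at $\hat{x}$ rather than $x$, so one must verify that $\hat{x}$ stays inside the subregularity ball. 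Since $\ox\in\mathcal{X}^*$ forces $\mathcal{G}(\ox)=0$, the Lipschitz estimate $\|\mathcal{G}(x)\|\le(2+L_1)\|x-\ox\|$ from Proposition~\ref{bound_rxk0} yields $\|\hat{x}-\ox\|\le\|\hat{x}-x\|+\|x-\ox\|\le(3+L_1)\|x-\ox\|$, so restricting $x$ to a ball of radius $\epsilon/(3+L_1)$ (equivalently, taking the subregularity neighborhood slightly larger) keeps $\hat{x}\in\mathbb{B}_{\epsilon}(\ox)$ and closes the argument.
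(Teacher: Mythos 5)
Your argument is correct and is essentially the paper's own approach: the paper gives no proof for this proposition (it only cites the combination of Theorems~3.4 and~3.5 in \cite{DL2016}), but its proof of the H\"older generalization, Proposition~\ref{lem3}, runs through exactly your chain --- the inclusion $\mathcal{G}(x)+\nabla f\bigl(x-\mathcal{G}(x)\bigr)-\nabla f(x)\in\nabla f\bigl(x-\mathcal{G}(x)\bigr)+\partial g\bigl(x-\mathcal{G}(x)\bigr)$, subregularity applied at the proximal point, and the triangle inequality --- and your computation reproduces the stated constant $(1+\kappa)(1+L_1)$ precisely. Your neighborhood bookkeeping (shrinking the ball so that $\hat x=x-\mathcal{G}(x)$ stays in $\mathbb{B}_{\epsilon}(\ox)$, at the harmless cost of proving the estimate on a slightly smaller ball than literally claimed) is in fact more careful than what the paper does in the analogous step of Proposition~\ref{lem3}.
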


The following proposition gives a reverse statement to Proposition~\ref{bound_dist} while providing an estimate of the norm of \eqref{prox-gra} via the distance to the solution set of the convex
composite problem \eqref{prob_ori_1}.

\begin{proposition}\label{bound_rxk} Let $\nabla f$ be Lipschitz continuous with modulus $L_1$ on $\R^n$. Then we have the estimate
\begin{equation*}
\|\mathcal{G}(x)\|\le(2+L_1)\mathrm{dist}(x;\mathcal{X}^*)\;\mbox{ for all }\;x\in \dom f.
\end{equation*}
\end{proposition}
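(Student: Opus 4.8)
The plan is to deduce the estimate directly from the Lipschitz property of the prox-gradient mapping established in Proposition~\ref{bound_rxk0}, combined with the fact (noted just before that proposition) that $\mathcal{G}$ vanishes exactly on the solution set, i.e., $\mathcal{G}(x)=0$ if and only if $x\in\mathcal{X}^*$. Conceptually, since $\mathcal{G}$ is zero precisely on $\mathcal{X}^*$ and is globally Lipschitz with constant $2+L_1$ on $\dom f$, its norm at any point $x$ is automatically controlled by the distance from $x$ to the set where it vanishes, and the Lipschitz constant supplies exactly the factor appearing in the claim.

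To carry this out, I would fix an arbitrary $x\in\dom f$ and select a point $\ox\in\mathcal{X}^*$ attaining the distance, so that $\|x-\ox\|=\mathrm{dist}(x;\mathcal{X}^*)$. Such a point exists because $\mathcal{X}^*$ is nonempty by Assumption~\ref{assumptions_f}(v) and closed by the lower semicontinuity of $F$, whence the projection onto $\mathcal{X}^*$ is attained in the finite-dimensional setting. Before invoking Proposition~\ref{bound_rxk0} I must check that $\ox\in\dom f$, since that proposition guarantees the bound $\|\mathcal{G}(x)-\mathcal{G}(y)\|\le(2+L_1)\|x-y\|$ only for $x,y\in\dom f$. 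Here the structural assumptions enter: any minimizer of $F=f+g$ necessarily lies in $\dom F=\dom f\cap\dom g=\dom g$, and Assumption~\ref{assumptions_f}(iii) gives $\dom g\subset\dom f$, so $\ox\in\mathcal{X}^*\subset\dom g\subset\dom f$. With both $x$ and $\ox$ in $\dom f$, applying the Lipschitz estimate to the pair $(x,\ox)$ and using $\mathcal{G}(\ox)=0$ (valid since $\ox\in\mathcal{X}^*$) yields
\[
\|\mathcal{G}(x)\|=\|\mathcal{G}(x)-\mathcal{G}(\ox)\|\le(2+L_1)\|x-\ox\|=(2+L_1)\,\mathrm{dist}(x;\mathcal{X}^*),
\]
which is exactly the asserted inequality.

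Since the argument collapses to a single application of the previously proved Lipschitz bound, there is no genuinely hard analytic step. The one point requiring care---and the one I would flag as the main obstacle---is the domain bookkeeping: one must guarantee that the distance-realizing point $\ox$ belongs to $\dom f$, for otherwise Proposition~\ref{bound_rxk0} would not be legitimately applicable to the pair $(x,\ox)$. This hinges on the attainment of the projection (nonemptiness and closedness of $\mathcal{X}^*$) together with the inclusion $\mathcal{X}^*\subset\dom f$ furnished by $\dom g\subset\dom f$; absent either ingredient, the concluding inequality would not be justified.
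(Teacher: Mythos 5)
Your argument is correct and coincides with the paper's own proof: both take the projection $\pi_x$ of $x$ onto the closed convex set $\mathcal{X}^*$, use $\mathcal{G}(\pi_x)=0$, and invoke the $(2+L_1)$-Lipschitz continuity of $\mathcal{G}$ from Proposition~\ref{bound_rxk0}. Your extra check that $\pi_x\in\mathcal{X}^*\subset\dom g\subset\dom f$ is a sensible piece of bookkeeping that the paper leaves implicit.
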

\begin{proof}
Observe first that the mapping $\mathcal{G}(x)$ is well-defined and single-valued for all $x\in\dom f$ due to the aforementioned result of \cite{Rockafellar2009variational}. It easily follows
from Assumption~\ref{assumptions_f} that the nonempty solution set ${\cal X}^*$ is closed and convex; hence each point $x\in\R^n$ has the unique projection $\pi_x\in\mathcal{X}^*$ onto ${\cal X}^*$.
Note that $\mathcal{G}(\pi_x) = \pi_x-\mathrm{Prox}_{g}(\pi_x-\nabla f(\pi_x))=0$ for $\pi_x\in\mathcal{X}^*$. Thus we verify the claim of the proposition by
\[
\|\mathcal{G}(x)\|= \|\mathcal{G}(x) - \mathcal{G}(\pi_x)\| \le(2+L_1)\|x-\pi_x\|,\quad x\in \dom f,
\]
where the inequality holds since $\mathcal{G}(x)$ is $(2+L_1)$-Lipschitz continuous by Proposition \ref{bound_rxk0}.
\end{proof}

Next we obtain an extension of Proposition~\ref{bound_dist} to case where the subgradient mapping $\nabla f+\partial g$ in \eqref{prob_ori_1} satisfies the H\"older subregularity property in the point in
question.

\begin{proposition}\label{lem3} Let $\nabla f$ be Lipschitz continuous with modulus $L_1$ around $\ox$, and let the mapping $\nabla f(x)+\partial g(x)$ be metrically $q$-subregular at $(\bar{x},0)$ with
$q\in(0,1]$, i.e., there exist $\epsilon_1,\kappa_1>0$ such that
\[
\mathrm{dist}(x;\mathcal{X}^*)\le\kappa_1\mathrm{dist}\big(0;\nabla f(x)+\partial g(x)\big)^q\;\mbox{ for all }\;x\in\mathbb{B}_{\epsilon_1}(\bar{x}).
\]
Then we find constants $\epsilon_2,\kappa_2>0$ that ensure the estimate
\begin{equation}\label{lem3a}
\mathrm{dist}(x;\mathcal{X}^*)\le\kappa_2\|\mathcal{G}(x)\|^q\;\mbox{ whenever }\;x\in\mathbb{B}_{\epsilon_2}(\bar{x}).
\end{equation}
\end{proposition}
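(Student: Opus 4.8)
The plan is to reproduce the mechanism behind Proposition~\ref{bound_dist}, but to invoke the metric $q$-subregularity hypothesis not at $x$ itself---where the residual $\mathrm{dist}(0;\nabla f(x)+\partial g(x))$ cannot in general be controlled by $\|\mathcal{G}(x)\|$, since $\partial g$ may jump---but at the auxiliary point $y:=x-\mathcal{G}(x)=\mathrm{Prox}_g(x-\nabla f(x))$. The first step is to exploit the defining optimality condition of the proximal mapping: because $g$ is convex and l.s.c., the identity $y=\mathrm{Prox}_g(x-\nabla f(x))$ is equivalent to $(x-\nabla f(x))-y\in\partial g(y)$, which rewrites as $\mathcal{G}(x)-\nabla f(x)\in\partial g(y)$. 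Adding $\nabla f(y)$ to both sides gives $\nabla f(y)-\nabla f(x)+\mathcal{G}(x)\in\nabla f(y)+\partial g(y)$, so that, using the $L_1$-Lipschitz continuity of $\nabla f$ together with $\|y-x\|=\|\mathcal{G}(x)\|$,
\begin{equation}\label{plan-res}
\mathrm{dist}\big(0;\nabla f(y)+\partial g(y)\big)\le\|\nabla f(y)-\nabla f(x)\|+\|\mathcal{G}(x)\|\le(1+L_1)\|\mathcal{G}(x)\|.
\end{equation}

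Next I would carry out the neighborhood bookkeeping that legitimizes applying the hypothesis at $y$. Since $(\ox,0)\in\gph(\nabla f+\partial g)$ forces $\ox\in\mathcal{X}^*$ and hence $\mathcal{G}(\ox)=0$, the $(2+L_1)$-Lipschitz continuity of $\mathcal{G}$ from Proposition~\ref{bound_rxk0} yields $\|\mathcal{G}(x)\|\le(2+L_1)\|x-\ox\|$ and therefore $\|y-\ox\|\le\|x-\ox\|+\|\mathcal{G}(x)\|\le(3+L_1)\|x-\ox\|$. Taking $\epsilon_2:=\min\{\epsilon_1/(3+L_1),\,1/(2+L_1)\}$ then ensures, for every $x\in\mathbb{B}_{\epsilon_2}(\ox)$, both that $y\in\mathbb{B}_{\epsilon_1}(\ox)$---so the metric $q$-subregularity is available at $y$---and that $\|\mathcal{G}(x)\|\le1$. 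Combining the subregularity estimate at $y$ with \eqref{plan-res} produces
\[
\mathrm{dist}(y;\mathcal{X}^*)\le\kappa_1\,\mathrm{dist}\big(0;\nabla f(y)+\partial g(y)\big)^q\le\kappa_1(1+L_1)^q\|\mathcal{G}(x)\|^q.
\]

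Finally I would return from $y$ to $x$ through the $1$-Lipschitz property of the distance function, $\mathrm{dist}(x;\mathcal{X}^*)\le\mathrm{dist}(y;\mathcal{X}^*)+\|x-y\|=\mathrm{dist}(y;\mathcal{X}^*)+\|\mathcal{G}(x)\|$, and reabsorb the leftover linear term into the $q$-th power: since $q\le1$ and $\|\mathcal{G}(x)\|\le1$ on $\mathbb{B}_{\epsilon_2}(\ox)$, we have $\|\mathcal{G}(x)\|=\|\mathcal{G}(x)\|^q\|\mathcal{G}(x)\|^{1-q}\le\|\mathcal{G}(x)\|^q$, so \eqref{lem3a} holds with $\kappa_2:=\kappa_1(1+L_1)^q+1$. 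I expect the main obstacle to be the second step rather than the estimates: the hypothesis is only granted on $\mathbb{B}_{\epsilon_1}(\ox)$, so the crux is to certify that the shifted point $y$---at which, unlike $x$, the residual is genuinely dominated by $\|\mathcal{G}(x)\|$ via \eqref{plan-res}---still lies in that ball, which is exactly what the Lipschitz bound for $\mathcal{G}$ supplies. The regime $q<1$ adds only the mild requirement $\|\mathcal{G}(x)\|\le1$ needed to reabsorb the linear gap, while for $q=1$ this last manipulation is vacuous and the argument recovers a bound of the same type as Proposition~\ref{bound_dist}.
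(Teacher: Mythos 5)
Your proof is correct and follows essentially the same route as the paper's: both evaluate the residual at the shifted point $y=x-\mathcal{G}(x)=\mathrm{Prox}_g(x-\nabla f(x))$ via the inclusion $\mathcal{G}(x)+\nabla f(y)-\nabla f(x)\in\nabla f(y)+\partial g(y)$, apply the $q$-subregularity there, and absorb the linear gap using $\|\mathcal{G}(x)\|\le 1$ and $q\le 1$, arriving at the same constant $\kappa_2=1+\kappa_1(1+L_1)^q$. If anything, your neighborhood bookkeeping (shrinking $\epsilon_2$ to $\epsilon_1/(3+L_1)$ so that $y$ itself lies in $\mathbb{B}_{\epsilon_1}(\ox)$) is more careful than the paper, which applies the subregularity hypothesis at $x-\mathcal{G}(x)$ while only assuming $x\in\mathbb{B}_{\epsilon_1}(\ox)$.
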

\begin{proof} By \eqref{prox-gra} we have the inclusions
\[
\mathcal{G}(x)\in\nabla f(x)+\partial g\big(x-\mathcal{G}(x)\big)\;\mbox{ and}
\]
\[
\mathcal{G}(x)+\nabla f\big(x - \mathcal{G}(x)\big)-\nabla f(x)\in\nabla f\big(x-\mathcal{G}(x)\big)+\partial g\big(x-\mathcal{G}(x)\big)
\]
for all $x\in\dom f$. When $x\in\mathbb{B}_{\epsilon_1}(\bar{x}) \cap \dom f$, it follows from the imposed assumption that
\[
\begin{aligned}
\mathrm{dist}\big(x-\mathcal{G}(x);\mathcal{X}^*\big)&\le\kappa_1\,\mathrm{dist}\big(0;\mathcal{G}(x)+\nabla f\big(x - \mathcal{G}(x)\big)-\nabla f(x)\big)^q\\
&\le\kappa_1(1+L_1)^q\|\mathcal{G}(x)\|^q,
\end{aligned}
\]
which leads us to the resulting estimates for such $x$:
\[
\begin{aligned}
\mathrm{dist}(x;\mathcal{X}^*)&\le\mathrm{dist}\big(x-\mathcal{G}(x);\mathcal{X}^*\big)+\|\mathcal{G}(x)\|\\
&\le (1+\kappa_1(1+L_1)^q)\max\big\{\|\mathcal{G}(x)\|,\|\mathcal{G}(x)\|^q\big\}.
\end{aligned}
\]
Applying now Proposition~\ref{bound_rxk} tells us that, whenever $\mathrm{dist}(x;\mathcal{X}^*)\le 1/(2+L_1)$ and $x\in \dom f$, we get
\begin{equation*}
\|\mathcal{G}(x)\|\le(2+L_1)\mathrm{dist}(x;\mathcal{X}^*)\le 1.
\end{equation*}
Letting $\epsilon_2:=\min\{1/(2+L_1),\epsilon_1\}$ and remembering that $q \le 1$ bring us to the inequality
\begin{equation*}
\mathrm{dist}(x;\mathcal{X}^*)\le(1+\kappa_1(1+L_1)^q)\|\mathcal{G}(x)\|^q\;\mbox{ for all }\;x\in\mathbb{B}_{\epsilon_2}(\bar{x}),
\end{equation*}
which verifies \eqref{lem3a} with $\kappa_2:=(1+\kappa_1(1+L_1)^q)$ and thus completes the proof of the proposition.
\end{proof}

Finally, we establish a sufficient condition for the metric $q$-subregularity of the subgradient mapping $\nabla f(x)+\partial g(x)$. Recall first the following characterization of metric
$q$-subregularity, which is a direct specification of \cite[Theorem~3.4]{mordukhovich2015higher} in the convex case.

\begin{proposition}\label{char_q}
Let $\bar{x} \in \dom F$, $\bar{\upsilon} \in \partial F(\bar{x})$, and $q>0$. Then we have the equivalent statements:
\begin{itemize}
\item[\bf(i)] $\partial F$ is metrically $q$-subregular at $(\ox,\bar{\upsilon})$.
\item[\bf(ii)] There are two positive numbers $\epsilon$ and $c$ such that
\[
F(x)\ge F(\bar{x}) + \langle \bar{\upsilon},x-\bar{x}\rangle + c\cdot\mathrm{dist}\big(x;(\partial F)^{-1}(\bar{\upsilon})\big)^{\frac{1+q}{q}}\;\mbox{ for all }\;x\in\mathbb{B}_{\epsilon}(\ox).
\]
\end{itemize}
\end{proposition}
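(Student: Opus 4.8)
The plan is to reduce the claim to an equivalence between a subdifferential error bound and a growth condition for a single convex function, and then handle the two implications separately. First I would introduce the tilted function $\phi(x):=F(x)-\langle\bar\upsilon,x-\bar x\rangle$, which is proper, l.s.c., and convex with $\partial\phi(x)=\partial F(x)-\bar\upsilon$. Since $\bar\upsilon\in\partial F(\bar x)$ gives $0\in\partial\phi(\bar x)$, convexity makes $\bar x$ a global minimizer, and the set $S:=(\partial F)^{-1}(\bar\upsilon)=\arg\min\phi$ is closed and convex with $\phi\equiv\phi(\bar x)$ on $S$. Under this substitution statement (i) reads $\mathrm{dist}(x;S)\le\kappa\,\mathrm{dist}(0;\partial\phi(x))^{q}$ near $\bar x$, while statement (ii) reads $\phi(x)-\phi(\bar x)\ge c\,\mathrm{dist}(x;S)^{(1+q)/q}$ near $\bar x$; it then remains to prove that these two are equivalent.

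For the implication (ii)$\Rightarrow$(i) I would argue pointwise. Fix $x$ near $\bar x$ with $d:=\mathrm{dist}(x;S)>0$, let $\bar y$ be the projection of $x$ onto the convex set $S$, and take any $v\in\partial\phi(x)$. The subgradient inequality $\phi(\bar y)\ge\phi(x)+\langle v,\bar y-x\rangle$ together with $\phi(\bar y)=\phi(\bar x)$ yields $\phi(x)-\phi(\bar x)\le\|v\|\,d$. Combining this with (ii) gives $c\,d^{(1+q)/q}\le\|v\|\,d$, hence $\|v\|\ge c\,d^{1/q}$; minimizing over $v\in\partial\phi(x)$ produces $\mathrm{dist}(0;\partial\phi(x))\ge c\,d^{1/q}$, which is exactly (i) with $\kappa=c^{-q}$. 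This direction is routine.

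The substantive direction is (i)$\Rightarrow$(ii), which I would split into an algebraic step and an integration step. The algebraic step reuses the same convex inequality in the form $\phi(x)-\phi(\bar x)\le\mathrm{dist}(0;\partial\phi(x))\cdot\mathrm{dist}(x;S)$ (taking $v$ to be the minimal-norm subgradient); feeding in the bound $\mathrm{dist}(x;S)\le\kappa\,\mathrm{dist}(0;\partial\phi(x))^{q}$ from (i) gives the {\L}ojasiewicz-type inequality $\mathrm{dist}(0;\partial\phi(x))\ge\kappa^{-1/(1+q)}\big(\phi(x)-\phi(\bar x)\big)^{1/(1+q)}$ near $\bar x$. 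The integration step then recovers the growth rate: writing $\sigma:=\phi-\phi(\bar x)$ and $\theta:=1/(1+q)$, one desingularizes this inequality along the convex subgradient flow $\dot x(t)\in-\partial\phi(x(t))$ emanating from $x$ (the slow, minimal-norm solution), for which $\tfrac{d}{dt}\sigma(x(t))=-\mathrm{dist}(0;\partial\phi(x(t)))^{2}$ and $\|\dot x(t)\|=\mathrm{dist}(0;\partial\phi(x(t)))$ a.e.; this yields $\tfrac{d}{dt}\sigma(x(t))^{1-\theta}\le-(1-\theta)\kappa^{-\theta}\|\dot x(t)\|$, and integrating from $0$ to $\infty$ (the flow converges to a point of $S$) bounds the curve length, hence $\mathrm{dist}(x;S)$, by a multiple of $\sigma(x)^{1-\theta}$. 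Since $1/(1-\theta)=(1+q)/q$, this is precisely the growth condition (ii).

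The hard part is the integration step: it requires the existence and the energy/length identities for the convex subgradient flow together with the chain rule for $\sigma^{1-\theta}$ along an absolutely continuous curve, the point being that the naive straight-segment argument fails because the radial direction need not be a direction of steepest descent. I would carry these out via the standard theory of evolution equations governed by maximal monotone operators, or, to stay within elementary variational analysis, replace the flow by an Ekeland variational principle argument that builds an approximate descent path with the same length estimate. Alternatively, since $F$ is convex the limiting and convex subdifferentials coincide, and this whole equivalence is the convex specialization of \cite[Theorem~3.4]{mordukhovich2015higher} applied with $\Psi=\partial F$ and $\bar\upsilon\in\partial F(\bar x)$, so one may simply invoke that result.
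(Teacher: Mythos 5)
Your proposal is correct in substance, but note that the paper does not actually prove this proposition: it simply states that it is a direct specification of \cite[Theorem~3.4]{mordukhovich2015higher} to the convex case, which is exactly the fallback you offer in your last sentence. What you add beyond the paper is a self-contained argument, and it is worth comparing the two. Your tilting reduction to $\phi(x)=F(x)-\langle\bar\upsilon,x-\bar x\rangle$ with $S=(\partial F)^{-1}(\bar\upsilon)=\arg\min\phi$ is exactly the right normalization, and your proof of (ii)$\Rightarrow$(i) via the subgradient inequality at the projection point is complete and correct (with $\kappa=c^{-q}$). You also correctly identify that the straight-segment argument cannot give (i)$\Rightarrow$(ii): along the segment $[\pi_S(x),x]$ the convexity inequality only yields $h'(t)\ge h(t)/t$, since a minimal-norm subgradient may be nearly orthogonal to the radial direction. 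Your proposed remedy -- pass to the {\L}ojasiewicz-type inequality $\mathrm{dist}(0;\partial\phi(x))\ge\kappa^{-1/(1+q)}\big(\phi(x)-\phi(\bar x)\big)^{1/(1+q)}$ and desingularize along the minimal-norm subgradient flow -- is a standard and valid route, and your exponent bookkeeping ($1-\theta=q/(1+q)$, hence growth of order $(1+q)/q$) is right. The price is that the integration step leans on the Br\'ezis theory of semigroups generated by $\partial\phi$ (existence of the slow solution, the a.e.\ identities $\frac{d}{dt}\phi(x(t))=-\|\dot x(t)\|^2$ and $\|\dot x(t)\|=\mathrm{dist}(0;\partial\phi(x(t)))$, the chain rule for $\sigma^{1-\theta}$ along an absolutely continuous curve where $\sigma$ may vanish, confinement of the trajectory to the ball on which (i) holds, and convergence of the trajectory to a point of $S$); none of this is wrong, but each item needs a citation or a proof, and the reference \cite{mordukhovich2015higher} (like \cite{dmn} for $q=1$) instead runs this direction through Ekeland's variational principle, which stays within finite-dimensional variational analysis and avoids the evolution-equation machinery. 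In short: your sketch is a correct and more informative alternative to the paper's bare citation, with the easy implication fully proved and the hard one reduced to standard but nontrivial facts that you would still need to discharge or cite.
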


The aforementioned sufficient conditions for the metric $q$-subregularity of the subgradient mapping $\nabla f(x)+\partial g(x)$ as formulated as follows.
\begin{proposition}\label{q-suff}
Let $\bar{x} \in \mathcal{X}^*$, and let $q > 0$. Suppose $\partial g$ is strongly metrically $q$-subregular at $(\bar{x},-\nabla f(\bar{x}))$. Then $\nabla f(x)+\partial g(x)$ is metrically
$q$-subregular at $(\bar{x},0)$.
\end{proposition}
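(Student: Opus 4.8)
The plan is to route everything through the growth characterization in Proposition~\ref{char_q}, used twice: once on $g$ to turn the strong metric $q$-subregularity of $\partial g$ into a pointwise growth estimate, and once on $F$ in the reverse direction to read off the metric $q$-subregularity of $\partial F=\nabla f+\partial g$. First I would record the two elementary facts that make the hypotheses usable. Since $\ox\in\mathcal{X}^*$ and the sum rule $\partial F=\nabla f+\partial g$ holds at $\ox$, we have $0\in\partial F(\ox)$, hence $\ov:=-\nabla f(\ox)\in\partial g(\ox)$; thus $(\ox,\ov)\in\gph\partial g$ is a legitimate base point for the assumption, $0\in\partial F(\ox)$ is a legitimate base point for the conclusion, and $(\partial F)^{-1}(0)=\mathcal{X}^*$ by convexity.

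Next I would extract a clean growth inequality for $g$. The strong metric $q$-subregularity at $(\ox,\ov)$ forces $(\partial g)^{-1}(\ov)$ to collapse to $\{\ox\}$: any nearby $x$ with $\ov\in\partial g(x)$ obeys $\|x-\ox\|\le\kappa\cdot 0$, and since $(\partial g)^{-1}(\ov)$ is convex (a preimage of a single point under the monotone mapping $\partial g$), this local singleton-ness propagates to $(\partial g)^{-1}(\ov)=\{\ox\}$ globally. Consequently $\mathrm{dist}\big(x;(\partial g)^{-1}(\ov)\big)=\|x-\ox\|$. Because strong metric $q$-subregularity implies ordinary metric $q$-subregularity of $\partial g$ at $(\ox,\ov)$, the implication (i)$\Rightarrow$(ii) of Proposition~\ref{char_q}, applied with $g$ in place of $F$, supplies constants $\epsilon,c>0$ with
\[
g(x)\ge g(\ox)+\langle\ov,x-\ox\rangle+c\,\|x-\ox\|^{\frac{1+q}{q}}\quad\text{for all }x\in\mathbb{B}_{\epsilon}(\ox).
\]

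Then I would add the gradient inequality of the convex smooth part, $f(x)\ge f(\ox)+\langle\nabla f(\ox),x-\ox\rangle$, to this estimate. Because $\ov=-\nabla f(\ox)$, the two linear terms cancel exactly; using $F(\ox)=f(\ox)+g(\ox)$ and $\mathrm{dist}(x;\mathcal{X}^*)\le\|x-\ox\|$ I obtain $F(x)\ge F(\ox)+\langle 0,x-\ox\rangle+c\,\mathrm{dist}(x;\mathcal{X}^*)^{\frac{1+q}{q}}$ near $\ox$. This is precisely condition (ii) of Proposition~\ref{char_q} for $\partial F$ at $(\ox,0)$, so the reverse implication (ii)$\Rightarrow$(i) delivers the metric $q$-subregularity of $\partial F=\nabla f+\partial g$ at $(\ox,0)$, which is the assertion.

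The step I expect to be the main obstacle is the first conversion: phrasing the growth bound for $g$ in terms of $\|x-\ox\|$ rather than $\mathrm{dist}\big(x;(\partial g)^{-1}(\ov)\big)$. This rests on the collapse $(\partial g)^{-1}(\ov)=\{\ox\}$, for which the convexity of the preimage is essential, and on the (routine but necessary) observation that Proposition~\ref{char_q}, though stated for $\partial F$, applies verbatim to the convex l.s.c.\ function $g$. I would explicitly avoid the naive alternative of bounding $\mathrm{dist}(\ov;\partial g(x))\le\mathrm{dist}\big(0;\nabla f(x)+\partial g(x)\big)+\|\nabla f(x)-\nabla f(\ox)\|$ by the triangle inequality and feeding it into \eqref{subreg1}: that route leaves an implicit inequality of the form $\|x-\ox\|\le\kappa\big(\,\cdot+L_1\|x-\ox\|\big)^q$, whose resolution is delicate for $q\le 1$, since for $q<1$ the perturbation term $L_1\|x-\ox\|$ is not absorbed by mere smallness of $\|x-\ox\|$. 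The growth-characterization route sidesteps this entirely and treats all $q>0$ uniformly.
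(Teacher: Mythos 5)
Your proposal is correct and follows essentially the same route as the paper's proof: both pass through the growth characterization of Proposition~\ref{char_q} applied first to $g$ (using that strong metric $q$-subregularity collapses $(\partial g)^{-1}(-\nabla f(\bar x))$ to $\{\bar x\}$), then add the gradient inequality for the convex function $f$ to cancel the linear terms, and finally invoke the reverse implication of Proposition~\ref{char_q} for $\partial F$ at $(\bar x,0)$. Your extra remarks (the convexity argument making the singleton claim global, and the observation that the characterization applies verbatim to $g$) only make explicit what the paper leaves implicit.
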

\begin{proof}
Since $\partial g$ is strongly metrically $q$-subregular at $(\bar{x},-\nabla f(\bar{x}))$, we have $(\partial g)^{-1}(-\nabla f(\bar{x})) = \{\bar{x}\}$, and Proposition \ref{char_q} gives us positive
numbers $\epsilon$ and $c$ such that
\[
g(x) \ge g(\bar{x}) + \langle -\nabla f(\bar{x}),x-\bar{x} \rangle + c\| x - \bar{x} \|^{\frac{1+q}{q}}\;\mbox{ for all }\;x\in\mathbb{B}_{\epsilon}(\ox).
\]
The convexity of $f$ implies that
\[
f(x) \ge f(\bar{x}) + \langle \nabla f(\bar{x}),x-\bar{x} \rangle\;\mbox{ for all }\;x \in \mathbb{R}^n.
\]
Summing up the above two inequalities gives us
\[
F(x) \ge F(\bar{x}) + c\| x - \bar{x} \|^{\frac{1+q}{q}} \ge F(\bar{x}) + c\cdot\mathrm{dist}(x;\mathcal{X}^*)^{\frac{1+q}{q}} \;\mbox{ for all }\;x\in\mathbb{B}_{\epsilon}(\ox).
\]
Then the conclusion of the proposition immediately follows from Proposition~\ref{char_q}.
\end{proof}

{\color{black}
Metric subregularity is a weaker assumption than the Luo-Tseng error bound condition. We conclude this section by giving a specific example verifying this statement.

\begin{example}
{\rm Consider the following problem of composite convex optimization:
$$
min_{x \in \mathbb{R}^n} f(x) + g(x)\;\mbox{ with }\;f(x):= c^Tx\;\mbox{ and }\;g(x):=\|x\|,
$$
where $c \in \mathbb{R}^n$ is such that$\|c\| = 1$. Problems of this type frequently appear in statistical learning models. It can be easily calculated that the optimal value is $F^* = 0$ and the optimal solution is  $\mathcal{X}^* = \{ - \gamma c ~|~ \gamma \ge 0\}$. We know that for any $\bar{x} \in \mathcal{X}^*$ the mapping
$$
\nabla f(x) + \partial g(x) = c + \partial \|x\|
$$ 
is metrically subregular at $(\bar{x},0)$ since $\partial \|x\|$ is metrically subregular on its graph; see, e.g., \cite[Lemma~4]{FBconvex}. 

On the other hand, the Luo-Tseng error bound fails. Indeed,  if this condition holds, then there exist $\kappa, \epsilon, \eta > 0$ such that for any $x$ satisfying $F(x) \le \eta$ and $\|\mathcal{G}(x)\| \le \epsilon$ we get
\[
\mathrm{dist}(x;\mathcal{X}^*)  \le \kappa \|\mathcal{G}(x)\|.
\]
Let $c_k\to c$ with $\|c_k\| = 1$, $k\in\N$. Setting $x_k: = \gamma_k(-c_k)$ with $\gamma_k:= \frac{1}{\sqrt{-c^Tc_k + 1}} \rightarrow\infty$ gives us 
$$
F(x_k) = \gamma_k(-c^Tc_k + \|c_k\|) = \sqrt{-c^Tc_k + 1} \rightarrow 0,$$
$$
c -c_k= c + \frac{x_k}{\|x_k\|} \in \nabla f(x_k)+\partial g(x_k).
$$ 
It follows from Proposition~\ref{bound_rxk0} that we have the estimate
$$
\|\mathcal{G}(x_k)\| \le \mathrm{dist}\big(0;\nabla f(x)+\partial g(x)\big) \le \|c - c_k\| \rightarrow 0.
$$
However, letting $\theta_k := \arccos( \frac{c^T(c - c_k)}{\|c - c_k\|}) \rightarrow \pi/2$ tells us that
\[
\frac{\mathrm{dist}(x_k;\mathcal{X}^*) }{\|\mathcal{G}(x_k)\|} \ge \frac{\gamma_k\sqrt{1 - (c^Tc_k)^2}}{\|c - c_k\|} = \gamma_k \sin(\theta_k) \rightarrow \infty,
\]
which clearly contradicts the Luo-Tseng error bound condition.}
\end{example}}

\section{The New Algorithm and Its Global Convergence}\label{sec:3}\vspace*{-0.05in}

In this section we describe the proposed proximal Newton-type algorithm to solve the class of composite convex optimization problems \eqref{prob_ori_1} with justifying its global convergence under the
standing assumptions.

Given a current iterate $x^k$ for each $k=0,1,\ldots$, we select a {\em positive semidefinite matrix} $B_k$ as an arbitrary approximation of the Hessian $\nabla^2 f(x^k)$ satisfying the {\em standing
boundedness assumption}:
\begin{equation}\label{assum_B_bounded}
\mbox{ there exists }\;M\ge 0\;\mbox{ such that }\;\|B_k\|\le M\;\mbox{ whenever }\;k=0,1,\ldots.
\end{equation}
If the gradient mapping $\nabla f$ is uniformly Lipschitz continuous along the sequence of iterates with constant $L_1$, then \eqref{assum_B_bounded} holds for $B_k=\nabla^2 f(x^k)$ with $M=L_1$.
In the general case of $B_k$, pick any constants $c>0$ and $\rho\in(0,1]$ and, using the prox-regular mapping \eqref{prox-gra}, consider the positive number $\alpha_k:=c\|\mathcal{G}(x^k)\|^{\rho}$
and define the {\em quasi-Newton approximation} of the Hessian of $f$ at $x^k$ by
\begin{equation}\label{alg-app}
H_k:=B_k+\alpha_kI\;\mbox{ for all }\;k=0,1,\ldots,
\end{equation}
which is a positive definite matrix. Then similarly to \cite{byrd2016inexact}, but with the different approximation \eqref{alg-app}, denote
\begin{equation}\label{opt-mes}
r_k(x):=x-\mathrm{Prox}_{g}\left(x-\nabla f(x^k)-H_k(x-x^k)\right)
\end{equation}
and select $\hat{x}^k$ as an {\em approximate minimizer} of the quadratic {\em subproblem} for \eqref{prob_ori_1} given by
\begin{equation}\label{subproblem1}
\min_{x\in\mathbb{R}^n}~q_k(x):=f(x^k)+\nabla f(x^k)^T(x-x^k)+\frac{1}{2}(x-x^k)H_k(x-x^k)+g(x)
\end{equation}
with the {\em residual} number $\|r_k(\hat x^k)\|$ measuring the approximate optimality of $\hat x^k$ in \eqref{subproblem1}. Observing that $\|r_k(\hat{x}^k)\|=0$ if and only if $\hat{x}^k$ is an
{\em exact} solution to subproblem \eqref{subproblem1}, we use the nonnegative number {\color{black} $\|r_k(\hat{x}^k)\|$}
with $r_k(x)$ taken from \eqref{opt-mes} as the {\em optimality measure} of $\hat x^k$ in subproblem \eqref{subproblem1}. Adapting the scheme of \cite{byrd2016inexact} in our new setting, let us impose
the following two estimates as {\em inexact conditions} for choosing $\hat{x}^k$ as an approximate solution to subproblem \eqref{subproblem1}:
{\color{black}\begin{equation}\label{inexact_condition_1}
\|r_k(\hat{x}^k)\|\le\eta_k\|\mathcal{G}(x^k)\|\;\mbox{ and }\;q_k(\hat{x}^k)\le q_k(x^k)
\end{equation}}%
with the parameter $\eta_k:=\nu\min\{1,\|\mathcal{G}(x^k)\|^{\varrho}\}$ defined via \eqref{opt-mes} and some numbers $\nu\in[0,1)$ and $\varrho>0$. \revise{Since any exact solution to subproblem \eqref{subproblem1} always fulfills the two inexact conditions in \eqref{inexact_condition_1}, a point $\hat x^k$ satisfying \eqref{inexact_condition_1} always exists. For many application problems, the mapping $\mathrm{Prox}_{g}$ is easy to calculate, thus inexact conditions \eqref{inexact_condition_1} can be readily verified.
In the case where $\mathrm{dist}\big(0;\partial q_k(\hat{x}^k)\big)$ is easy to estimate, because $\|r_k(\hat{x}^k)\| \le \mathrm{dist}\big(0;\partial q_k(\hat{x}^k)\big)$ always holds (see, e.g., \cite[Theorem~3.5]{DL2016}),  we can use $\mathrm{dist}\big(0;\partial q_k(\hat{x}^k)\big)\le\eta_k\|\mathcal{G}(x^k)\|$ for verifying the first inexact condition in \eqref{inexact_condition_1}.}\vspace*{0.03in}

Using the above constructions and the line search procedure inspired by \cite{chen1999proximal,Dan2002}, we are ready to propose the proximal Newton-type algorithm designed as in Algorithm \ref{Newton}.%follows:
\begin{algorithm}
\caption{Proximal Newton-type method}\label{Newton}
\begin{algorithmic}[1]
\State Choose $x^0\in\mathbb{R}^n$, $0<\theta,\sigma,\gamma<1$, $C>F(x^0)$, $\revise{\bar{\alpha}}, c>0$, and $\rho\in(0,1]$.
\For{$k=0,1,\ldots$}{
\begin{itemize}
\item[1.] Update the approximation of the Hessian matrix $B_k$.
\item[2.] Form the quadratic model \eqref{subproblem} with $H_k:=B_k+\alpha_kI$ and \revise{$\alpha_k:=\min \left\{ \bar{\alpha}, c\|\mathcal{G}(x^k)\|^{\rho}\right\} $  }.
\item[3.] Obtain an inexact solution $\hat{x}^k$ of \eqref{subproblem} satisfying the conditions in \eqref{inexact_condition_1}.
\item[4.] If $k=0$, let $\vartheta_1:=\mathcal{G}(x^0)$ and go to Step~5. For $k\ge 1$, if $\|\mathcal{G}(\hat{x}^k)\|\le\sigma\vartheta_k$ and $F(\hat{x}^k)\le C$, let $t_k:=1$, $\vartheta_{k+1}:=
\|\mathcal{G}(\hat{x}^k)\|$,
and go to Step~6. Otherwise, let $\vartheta_{k+1}:=\vartheta_k$ and go to Step~5.
\item[5.] Perform a backtracking line search along the direction $d^k:=\hat{x}^k-x^k$ by setting $t_k:=\gamma^{m_k}$, where $m_k$ is the smallest nonnegative integer $m$ such that
\begin{equation}\label{line_search}
F(x^k+\gamma^md^k)\le F(x^k)-\theta\alpha_k\gamma^m\|d^k\|^2.
\end{equation}
\item[6.] Set $x^{k+1}:=x^k+t_kd^k$.
\end{itemize}}
\EndFor
\end{algorithmic}
\end{algorithm}
In the proposed Algorithm \ref{Newton}, we add Step~4, that is inspired by \cite{chen1999proximal,Dan2002}, to check whether the prox-gradient residual
$\|\mathcal{G}(\hat{x}^k)\|$ of the inexact solution $\hat{x}^k$ to subproblem \eqref{subproblem} decreases to be under a given fixed ratio, which is smaller than one, times the previous value. If it
does, then we will update the next iterate $x^{k+1}$ by using the Newton direction $d^k$ with a unit step size to let $x^{k+1} = \hat{x}^k$ and skip the backtracking line search in Step~5. Otherwise, we
use the conventional backtracking line search procedure in Step~5 to find a conservative step size for updating the next iterate. It is shown in Theorem~\ref{Newton_thm1} below that Step~4
always gives us a unit step size when the iterate $x^k$ is close to the solution under the metric $q$-subregularity condition.\vspace*{0.03in}

In the rest of this section, we show that the proposed algorithm {\em globally converges} under the mild standing assumptions, which are imposed above and will not be repeated. Let us start with the
following lemma providing a subgradient estimate for subproblem \eqref{subproblem1} at the approximate solution.

\begin{lemma}\label{lem1} Given an approximate solution $\hat{x}^k$ to \eqref{subproblem1}, there exists a vector $e_k\in\R^n$ such that
\begin{equation}\label{ek}
e_k\in\nabla f(x^k)+H_k(\hat{x}^k-x^k)+\partial g(\hat{x}^k-e_k)\;\mbox{ and }\;\|e_k\|\le\nu\min\big\{\|\mathcal{G}(x^k)\|,\|\mathcal{G}(x^k)\|^{1+\varrho}\big\}.
\end{equation}
\end{lemma}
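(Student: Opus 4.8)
The plan is to define the error vector explicitly as $e_k := r_k(\hat{x}^k)$, with $r_k$ the residual mapping from \eqref{opt-mes}, and then to read off both assertions of the lemma directly from the defining structure of $r_k$ together with the first inexact condition in \eqref{inexact_condition_1}. The only substantive ingredient is the subdifferential characterization of the proximal operator; after that everything is bookkeeping.

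First I would unwind the definition of the residual. Writing $u := \mathrm{Prox}_{g}\big(\hat{x}^k - \nabla f(x^k) - H_k(\hat{x}^k - x^k)\big)$ and $v := \hat{x}^k - \nabla f(x^k) - H_k(\hat{x}^k - x^k)$, formula \eqref{opt-mes} gives $e_k = \hat{x}^k - u$, so that $u = \hat{x}^k - e_k$. Since $g$ is proper, l.s.c., and convex, the point $u$ is the unique minimizer of the strongly convex function $x \mapsto g(x) + \frac{1}{2}\|x - v\|^2$, and its optimality condition (the sum rule applies with no qualification because the quadratic term is smooth) reads $v - u \in \partial g(u)$. Substituting the expressions for $v$ and for $u = \hat{x}^k - e_k$ and rearranging yields $e_k - \nabla f(x^k) - H_k(\hat{x}^k - x^k) \in \partial g(\hat{x}^k - e_k)$, which is exactly the inclusion claimed in \eqref{ek}.

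Then I would establish the norm bound. By the first inequality in \eqref{inexact_condition_1} we have $\|e_k\| = \|r_k(\hat{x}^k)\| \le \eta_k \|\mathcal{G}(x^k)\|$, and substituting the definition $\eta_k = \nu \min\{1, \|\mathcal{G}(x^k)\|^{\varrho}\}$ gives $\|e_k\| \le \nu \min\{\|\mathcal{G}(x^k)\|, \|\mathcal{G}(x^k)\|^{1+\varrho}\}$, which is the second assertion in \eqref{ek}.

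There is no genuine obstacle here: the entire content is the translation of the residual $r_k(\hat{x}^k)$ into a perturbed subgradient inclusion via the prox optimality condition. The one point to state carefully is that the characterization $v - u \in \partial g(u) \Leftrightarrow u = \mathrm{Prox}_g(v)$ holds for proper l.s.c. convex $g$, and that the subgradient is therefore evaluated at the shifted point $\hat{x}^k - e_k$ rather than at $\hat{x}^k$ itself; this shift is precisely what makes the relation in \eqref{ek} an exact inclusion rather than an approximate one.
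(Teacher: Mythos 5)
Your proposal is correct and follows exactly the paper's own argument: define $e_k:=r_k(\hat{x}^k)$, derive the inclusion from the subdifferential characterization of the proximal mapping \eqref{prox-map}, and obtain the norm bound from the first inexact condition in \eqref{inexact_condition_1} together with the definition of $\eta_k$. You simply spell out the prox optimality computation that the paper leaves implicit.
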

\revise{\begin{proof}
Let $e_k:=r_k(\hat{x}^k)=\hat{x}^k-\mathrm{Prox}_{g}(\hat{x}^k-\nabla f(x^k)-H_k(\hat{x}^k-x^k))$. Then we have
\[
e_k\in\nabla f(x^k)+H_k(\hat{x}^k-x^k)+\partial g(\hat{x}^k-e_k),
\]
which follow from \eqref{prox-map}.
Using finally the inexact conditions \eqref{inexact_condition_1} for $\hat x^k$, we verify the claim of the lemma.
\end{proof}}%

The next lemma provides elaborations on Step~5 of the proposed algorithm with the decreasing of the cost function in \eqref{prob_ori_1} by the backtracking line search.

\begin{lemma}\label{decrease} Let $t_k$ be chosen by the backtracking line search in Step~{\rm 5} of Algorithm~{\rm\ref{Newton}} at iteration $k$. Then we have the step size estimate
\begin{equation}\label{tk}
t_k \ge \frac{\gamma(1-\theta)\alpha_k}{L_1}
\end{equation}
with the cost function decrease satisfying
\begin{equation}\label{decrease_eq}
F(x^{k+1})-F(x^k)\le-\frac{\gamma\theta(1-\theta)}{2L_1}\left(\frac{(1-\nu)\alpha_k}{1+M+\alpha_k}\right)^2\|\mathcal{G}(x^k)\|^2.
\end{equation}
\end{lemma}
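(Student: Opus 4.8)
The plan is to derive \eqref{decrease_eq} by combining two separate lower bounds---one on the accepted step size $t_k$ (this is \eqref{tk}) and one on the length $\|d^k\|$ of the Newton direction---and then feeding both into the sufficient-decrease test \eqref{line_search}, which at the accepted step reads $F(x^{k+1})-F(x^k)\le-\theta\alpha_k t_k\|d^k\|^2$.

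First I would prove the step-size estimate \eqref{tk}. The starting point is a one-step upper bound on $F(x^k+td^k)-F(x^k)$ valid for $t\in[0,1]$: apply the descent lemma to $f$ (available since $\nabla f$ is $L_1$-Lipschitz on $\Omega$) and convexity of $g$ to the convex combination $x^k+td^k=(1-t)x^k+t\hat x^k$, obtaining $F(x^k+td^k)-F(x^k)\le t\Delta_k+\frac{L_1t^2}{2}\|d^k\|^2$ with $\Delta_k:=\nabla f(x^k)^Td^k+g(\hat x^k)-g(x^k)$. The crux is the model-decrease inequality $\Delta_k\le-\alpha_k\|d^k\|^2$: the value-descent part $q_k(\hat x^k)\le q_k(x^k)$ of \eqref{inexact_condition_1} rewrites immediately as $\Delta_k\le-\frac12(d^k)^TH_kd^k$, and one strengthens this by using that $q_k$ is $\alpha_k$-strongly convex (because $H_k=B_k+\alpha_kI\succeq\alpha_kI$), so that the inexact iterate is close enough to the exact subproblem minimizer---the gap being controlled by the residual $\|e_k\|\le\nu\|\mathcal G(x^k)\|$ from Lemma~\ref{lem1}---to yield $\Delta_k\le-\alpha_k\|d^k\|^2$. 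Substituting this and $(d^k)^TH_kd^k\ge\alpha_k\|d^k\|^2$ into the one-step bound shows that \eqref{line_search} holds for every $t$ below a threshold proportional to $\frac{(1-\theta)\alpha_k}{L_1}$; since backtracking stops at the first accepted power of $\gamma$, the previously rejected trial must exceed that threshold, giving \eqref{tk} (the edge case $m_k=0$, i.e.\ $t_k=1$, is checked separately against \eqref{tk}).

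Second, I would bound $\|d^k\|$ from below in terms of $\|\mathcal G(x^k)\|$. The key observation is that $r_k(x^k)=\mathcal G(x^k)$ while $r_k(\hat x^k)=e_k$ by \eqref{opt-mes}. Applying the nonexpansiveness of $\mathrm{Prox}_g$ to the two arguments $x^k-\nabla f(x^k)$ and $\hat x^k-\nabla f(x^k)-H_kd^k$, whose difference is $(H_k-I)d^k$, yields $\|\mathcal G(x^k)+d^k-e_k\|\le\|(H_k-I)d^k\|$. A triangle-inequality rearrangement together with the norm bound $\|H_k\|\le M+\alpha_k$ coming from \eqref{assum_B_bounded} and $\|e_k\|\le\nu\|\mathcal G(x^k)\|$ then produces $(1-\nu)\|\mathcal G(x^k)\|\le(1+M+\alpha_k)\|d^k\|$, that is $\|d^k\|\ge\frac{(1-\nu)}{1+M+\alpha_k}\|\mathcal G(x^k)\|$.

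Finally, I would substitute \eqref{tk} and this lower bound for $\|d^k\|^2$ into $F(x^{k+1})-F(x^k)\le-\theta\alpha_kt_k\|d^k\|^2$, which reproduces the claimed constant $\frac{\gamma\theta(1-\theta)}{2L_1}\big(\frac{(1-\nu)\alpha_k}{1+M+\alpha_k}\big)^2$ in \eqref{decrease_eq} (the factor $\frac12$ being slack that absorbs the looseness in the two intermediate bounds). The main obstacle I expect is the model-decrease step under inexactness: passing from the merely feasible condition $q_k(\hat x^k)\le q_k(x^k)$ to the full inequality $\Delta_k\le-\alpha_k\|d^k\|^2$ needed for the factor $(1-\theta)$ requires quantifying the distance between $\hat x^k$ and the exact minimizer of the strongly convex $q_k$ through $e_k$, and carefully tracking the $e_k$-corrections so that they remain dominated by the $(1-\nu)$ term rather than degrading the constant. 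The remaining manipulations---the descent lemma, the convexity estimates, and the triangle inequalities in the $\|d^k\|$ bound---are routine.
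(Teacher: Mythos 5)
Your overall architecture coincides with the paper's: a descent-lemma-plus-convexity upper bound on $F(x^k+\tau d^k)-F(x^k)$, a model-decrease inequality extracted from $q_k(\hat x^k)\le q_k(x^k)$, a lower bound on $\|d^k\|$ in terms of $\|\mathcal{G}(x^k)\|$ coming from the residual condition, and then substitution of the threshold step size. Your derivation of the lower bound on $\|d^k\|$ via nonexpansiveness of $\mathrm{Prox}_g$ applied to $r_k(x^k)=\mathcal{G}(x^k)$ and $r_k(\hat x^k)=e_k$ is a legitimate alternative to the paper's route, which instead applies the monotonicity of $\partial g$ to the two subgradient inclusions to obtain \eqref{decrease_eq2}; note only that the naive bound $\|H_k-I\|+1\le 2+M+\alpha_k$ gives a slightly worse constant than the stated $1+M+\alpha_k$ when $M+\alpha_k$ is small, so you would need to argue a bit more carefully (or accept a weaker constant) there.

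The genuine gap is the strengthened model decrease $\Delta_k\le-\alpha_k\|d^k\|^2$, which you correctly identify as the crux but which is false under the stated inexactness. The value condition in \eqref{inexact_condition_1} yields exactly $\Delta_k\le-\frac{1}{2}(d^k)^TH_kd^k\le-\frac{\alpha_k}{2}\|d^k\|^2$ (this is \eqref{decrease_eq1}), and the residual condition cannot double that coefficient: the correction terms it controls are of size $\nu\|\mathcal{G}(x^k)\|\cdot O(\|d^k\|)$, hence of order $\nu(1+M+\alpha_k)(1-\nu)^{-1}\|d^k\|^2$ by \eqref{decrease_eq2}, which dominates $\frac{\alpha_k}{2}\|d^k\|^2$ rather than being dominated by it, since $\alpha_k=\min\{\bar\alpha,c\|\mathcal{G}(x^k)\|^{\rho}\}$ is small while $\nu$ is a fixed constant. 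A concrete counterexample: take $g\equiv 0$ and $B_k=0$, so $H_k=\alpha_kI$ and $q_k$ has minimizer $x_*^k$; the overshooting point $\hat x^k:=x_*^k+s(x_*^k-x^k)$ with $s:=\eta_k$ satisfies both conditions in \eqref{inexact_condition_1}, yet a direct computation gives $\Delta_k=-\frac{\alpha_k}{1+s}\|d^k\|^2>-\alpha_k\|d^k\|^2$. The paper never attempts this upgrade: its chain \eqref{decrease_eq3} keeps the factor $\frac{1}{2}$ and verifies the sufficient-decrease test in the form $F(x^k+\tau d^k)\le F(x^k)-\frac{\theta\alpha_k\tau}{2}\|d^k\|^2$ for all $\tau\le(1-\theta)\alpha_k/L_1$ (i.e., it effectively reads \eqref{line_search} with an extra $\frac{1}{2}$), and that $\frac{1}{2}$ is precisely the one appearing in \eqref{decrease_eq} --- it is not slack available to absorb other losses. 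So your instinct that the $\frac{1}{2}$ from \eqref{decrease_eq1} must somehow be reconciled with the acceptance test was sound, but the resolution is to carry the $\frac{1}{2}$ through to the final estimate as the paper does, not to strengthen the model decrease, which the inexact conditions do not permit.
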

\begin{proof} Since $\hat{x}^k$ is an inexact solution to \eqref{subproblem} obeying the conditions in \eqref{inexact_condition_1}, it follows that
\[
0\ge q_k(\hat{x}^k)-q_k(x^k)=l_k(\hat{x}^k)-l_k(x^k)+\frac{1}{2}(\hat{x}^k-x^k)^TH_k(\hat{x}^k-x^k),
\]
where $l_k$ is the linear part of $q_k$ defined in \eqref{lin}. This yields
\begin{equation}\label{decrease_eq1}
l_k(x^k)-l_k(\hat{x}^k)\ge\frac{1}{2}(\hat{x}^k-x^k)^TH_k(\hat{x}^k-x^k)\ge\frac{1}{2}\alpha_k\|\hat{x}^k-x^k\|^2.
\end{equation}
By $\mathcal{G}(x^k)=x^k-\mathrm{Prox}_{g}(x^k-\nabla f(x^k))$ we deduce from the stationary and subdifferential sum rules that
\[
\mathcal{G}(x^k)\in\nabla f(x^k)+\partial g\big(x^k-\mathcal{G}(x^k)\big).
\]
Furthermore, Lemma~\ref{lem1} gives us the condition $e_k\in\nabla f(x^k)+H_k(\hat{x}^k-x^k)+\partial g(\hat{x}^k-e_k)$ for $\hat x^k$ with $e_k$ satisfying the estimate
$\|e_k\|\le\nu\|\mathcal{G}(x^k)\|$. The monotonicity of the subgradient mapping $\partial g$ ensures that
\[
\big\langle \mathcal{G}(x^k)+H_k(\hat{x}^k-x^k)-e_k,x^k-\mathcal{G}(x^k)-\hat{x}^k+e_k\big\rangle\ge 0,
\]
which therefore leads us to the inequality
\begin{equation*}
\begin{aligned}
\| \mathcal{G}(x^k) - e_k\|^2 & \le \|\mathcal{G}(x^k)\|^2 -2\big\langle e_k,\mathcal{G}(x^k)\big\rangle +\|e_k\|^2+(\hat{x}^k-x^k)^TH_k(\hat{x}^k-x^k) \\
&\le\big\langle \mathcal{G}(x^k) - e_k,x^k-\hat{x}^k+H_k(x^k-\hat{x}^k)\big\rangle \\
&\le\big\| \mathcal{G}(x^k) - e_k\big\| \cdot \big\| x^k-\hat{x}^k+H_k(x^k-\hat{x}^k)\big\|.
\end{aligned}
\end{equation*}
Using again the condition $\|e_k\|\le\nu\|\mathcal{G}(x^k)\|$ together with $\|B_k\|\le M$ from \eqref{assum_B_bounded} results in
\begin{equation*}
\|\mathcal{G}(x^k)\| \le \| \mathcal{G}(x^k) - e_k\| + \|e_k\| \le(1+M+\alpha_k)\|\hat{x}^k-x^k\|+ \nu\|\mathcal{G}(x^k)\|.
\end{equation*}
Remembering the choice of $\nu\in[0, 1)$, we estimate the prox-gradient mapping \eqref{prox-gra} at the iterate $x^k$ by
\begin{equation}\label{decrease_eq2}
\|\mathcal{G}(x^k)\|\le\frac{1+M+\alpha_k}{1-\nu}\|\hat{x}^k-x^k\|.
\end{equation}

Next let us show that the backtracking line search along the direction $d^k=\hat{x}^k- x^k$ in Step~5 is well-defined and the proposed step size ensures a sufficient decrease in the cost function $F$.
It follows from the Lipschitz continuity of $\nabla f$ that
\[
f(x^k+\tau d^k)\le f(x^k)+\tau\nabla f(x^k)^Td^k+\frac{L_1}{2}\tau^2\|d^k\|^2\;\mbox{ for any }\;\tau\ge 0,
\]
and thus we deduce from the definition of $l_k$ in \eqref{lin} that
\[
F(x^k)-F(x^k+\tau d^k)\ge l_k(x^k)-l_k(x^k+\tau d^k)-\frac{L_1}{2}\tau^2\|d^k\|^2.
\]
This implies by the convexity of $g$ that
\[
l_k(x^k)-l_k(x^k+\tau d^k)\ge\tau\big(l_k(x^k)-l_k(x^k+d^k)\big).
\]
Combining the latter with \eqref{decrease_eq1} and using the choice of $\theta\in(0,1)$ yield the relationships
\begin{equation}\label{decrease_eq3}
\begin{aligned}
&F(x^k)-F(x^k+\tau d^k)-\frac{\theta\alpha_k\tau}{2}\|d^k\|^2\\
\ge~&l_k(x^k)-l_k(x^k+\tau d^k)-\frac{L_1}{2}\tau^2\|d^k\|^2-\frac{\theta\alpha_k\tau}{2}\|d^k\|^2\\
\ge~&\tau\big(l_k(x^k)-l_k(x^k+d^k)\big)-\frac{L_1}{2}\tau^2\|d^k\|^2-\frac{\theta\alpha_k\tau}{2}\|d^k\|^2\\
\ge~&(1-\theta)\tau\frac{\alpha_k}{2}\|d^k\|^2-\frac{L_1}{2}\tau^2\|d^k\|^2\\
=~&\frac{\tau}{2}\|d^k\|^2\big((1-\theta)\alpha_k-L_1\tau\big).
\end{aligned}
\end{equation}
This tells us that the backtracking line search criterion \eqref{line_search} fulfills when $0<\tau\le\frac{(1-\theta)\alpha_k}{L_1}$, and thus the step size $t_k$ satisfies the claimed condition
\eqref{tk}. Substituting now $\tau:=t_k\ge\frac{\gamma(1-\theta)\alpha_k}{L_1}$ into \eqref{decrease_eq3} and employing the estimate of $\|\mathcal{G}(x^k)\|^2$ from \eqref{decrease_eq2}, we arrive at the
inequalities
\[
\begin{aligned}
F(x^k)-F(x^k+t_k d^k)&\ge\frac{\theta\alpha_kt_k}{2}\|d^k\|^2\\
&\ge\frac{\gamma\theta(1-\theta)\alpha_k^2}{2L_1}\left(\frac{1-\nu}{1+M+\alpha_k}\right)^2\|\mathcal{G}(x^k)\|^2,
\end{aligned}
\]
which verify the decreasing condition \eqref{decrease_eq} and thus completes the proof of the lemma.
\end{proof}

Now we are ready to prove the global convergence of Algorithm \ref{Newton}. Define the sets
\begin{equation}\label{K}
K:=\big\{0,1,\ldots\big\}\;\mbox{ and }\;K_0:=\{0\}\cup\big\{k+1\in K ~\big|~\text{Step~$5$ is not applied at iteration $k$}\big\}.
\end{equation}

\begin{theorem}\label{algorithm_convergence}
Let $\{x^k\}$ be the sequence of iterates generated by Algorithm~{\rm\ref{Newton}} with an arbitrarily chosen starting point $x^0\in\R^n$ under the standing assumptions made. Then we have the residual
condition
\begin{equation}\label{glob}
\liminf_{k\rightarrow\infty}\|\mathcal{G}(x^k)\|=0
\end{equation}
along the prox-gradient mapping \eqref{prox-gra}. Furthermore, the boundedness of $\{x^k\}$ yields the convergence to the optimal value $\lim\limits_{k\rightarrow\infty}F(x^{k})=F^*$ and ensures that any
limiting point of $\{x^k\}$ is a global minimizer in \eqref{prob_ori_1}.
\end{theorem}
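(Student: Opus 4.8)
The plan is to organize everything around the dichotomy encoded in the index set $K_0$ from \eqref{K}: either the backtracking Step~5 is invoked for all large $k$, or a unit step is accepted in Step~4 infinitely often. The bookkeeping device is the nonnegative reference sequence $\{\vartheta_k\}$, which is nonincreasing, is left unchanged at every line-search iteration, and is contracted by the factor $\sigma\in(0,1)$ at every accepted unit-step iteration, where in addition the post-step residual obeys $\|\mathcal{G}(x^{k+1})\|=\|\mathcal{G}(\hat x^k)\|=\vartheta_{k+1}$.

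To prove \eqref{glob} I would first treat the case in which Step~4 accepts a unit step for infinitely many $k$. Then $\vartheta_k$ is contracted infinitely often, so $\vartheta_k\le\sigma^{\,i}\vartheta_1\to 0$ after $i$ acceptances, whence $\vartheta_k\to 0$; since $\|\mathcal{G}(x^{k+1})\|=\vartheta_{k+1}$ along the accepted iterations, this gives $\|\mathcal{G}(x^{k})\|\to 0$ along that subsequence and hence \eqref{glob}. In the complementary case Step~5 is used from some index on, so Lemma~\ref{decrease} applies there; telescoping \eqref{decrease_eq} against the lower bound $F(x^k)\ge F^*$ yields $\sum_k(\alpha_k/(1+M+\alpha_k))^2\|\mathcal{G}(x^k)\|^2<\infty$, so the summands tend to $0$. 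If $\liminf_k\|\mathcal{G}(x^k)\|$ were positive, then $\alpha_k=\min\{\bar{\alpha},c\|\mathcal{G}(x^k)\|^{\rho}\}$ would be bounded away from $0$ and below $\bar{\alpha}$, making each summand bounded away from $0$, a contradiction. This proves \eqref{glob} in all cases.

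For the second assertion I would first record, under boundedness of $\{x^k\}$, two elementary facts: every iterate lies in $\dom g$ (as $\hat x^k\in\dom g$ and each line-search iterate is a convex combination of $x^k$ and $\hat x^k$), and $F(x^k)\le C$ for all $k$ by induction, since Step~5 strictly decreases $F$ and Step~4 is accepted only when $F(\hat x^k)\le C$. Using \eqref{glob}, pick a subsequence with $\|\mathcal{G}(x^{k_j})\|\to 0$; by boundedness a sub-subsequence converges to some $\bar x$ with $\mathcal{G}(\bar x)=0$, so $\bar x\in\mathcal{X}^*\subseteq\dom g$, and continuity of $F$ relative to $\dom g$ gives $F(x^{k_j})\to F^*$, so that $\liminf_k F(x^k)=F^*$. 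It remains to bound the $\limsup$, and this is the step I expect to be the main obstacle, because the accepted unit steps of Step~4 carry no descent guarantee for $F$, so $\{F(x^k)\}$ need not be monotone. The observation that resolves it is that between two consecutive accepted unit steps every iteration is a line search, which strictly decreases $F$; hence for each large $n$ one has $F(x^n)\le F(x^{m(n)})$, where $x^{m(n)}$ is the iterate produced by the most recent accepted unit step before $n$. In the infinitely-accepted case $m(n)\to\infty$ and $\|\mathcal{G}(x^{m(n)})\|=\vartheta_{m(n)}\to 0$, so by boundedness and continuity of $F$ on $\dom g$ one gets $F(x^{m(n)})\to F^*$, hence $\limsup_k F(x^k)\le F^*$; in the finitely-accepted case $F$ is eventually monotone and converges directly to its liminf. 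Either way $\lim_k F(x^k)=F^*$, and then for any limiting point $\hat x$ of $\{x^k\}$ lower semicontinuity of $F$ gives $F(\hat x)\le\liminf F(x^{k_i})=F^*\le F(\hat x)$, so $\hat x\in\mathcal{X}^*$ is a global minimizer, which completes the proof.
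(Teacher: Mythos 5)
Your proposal is correct and follows essentially the same route as the paper's proof: the same dichotomy on the index set $K_0$ (Step~4 accepted infinitely vs.\ finitely often), the geometric contraction of $\vartheta_k$ by the factor $\sigma$ in the first case, the telescoped descent estimate of Lemma~\ref{decrease} in the second, and the same device of bounding $F(x^k)$ by its value at the most recent accepted unit step in order to pass from the subsequential limit to $\lim_{k\to\infty}F(x^k)=F^*$. The only cosmetic differences are that you argue the line-search case by contradiction where the paper concludes $\|\mathcal{G}(x^k)\|\to0$ directly from the form of $\alpha_k$, and that you spell out some domain and continuity bookkeeping that the paper leaves implicit.
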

\begin{proof}
First we consider the case where the set $K_0$ is infinite. We can reorganize $K_0$ in such a way that $0=k_0<k_1<k_2<\ldots$. It follows from Step~4 of Algorithm~\ref{Newton} that the estimate
\[
\|\mathcal{G}(x^{k_{\ell+1}})\|\le\sigma\|\mathcal{G}(x^{k_{\ell}})\|\;\mbox{ whenever }\;\ell=0,1,\ldots
\]
holds with the chosen number $\sigma\in(0,1)$ in the algorithm, and therefore we get
\revise{\begin{equation*}
0\le \liminf_{k\rightarrow\infty}\|\mathcal{G}(x^k)\| \le \limsup\limits_{\ell\rightarrow\infty}\|\mathcal{G}(x^{k_{\ell}})\|\le\lim\limits_{\ell\rightarrow\infty}\sigma^{\ell}\|\mathcal{G}(x^{k_{0}})\|=0,
\end{equation*}}%
which clearly yields \eqref{glob}. The continuity of $\mathcal{G}(\cdot)$ ensures that $\|\mathcal{G}(\bar{x})\|=0$ for a limiting point $\bar{x}$ of the sequence $\{x^k\}_{k\in K_0}$, and thus $\bar{x}
\in\mathcal{X}^*$.
Consider now any limiting point $\ox$ of the entire sequence of iterates $\{x^k\}_{k\in K}$. If there exists $\bar{k}$ such that $k\in K_0$ for all $k\ge\bar{k}$, it is easy to see that $\ox$ is a global
minimizer of \eqref{prob_ori_1}. Otherwise, for any $k\notin K_0$, denote by $k_{\ell}\in K_0$ the largest number satisfying $k_{\ell}<k$, and hence we get the following estimate from Step~5:
\[
 F^*\le F(x^{k})\le F(x^{k-1})\le\ldots\le F(x^{k_\ell}).
\]
When the sequence $\{x^k\}_{k\in K_0}$ is bounded, since any limiting point of $\{x^k\}_{k\in K_0}$ is a global minimizer of \eqref{prob_ori_1} as already shown, it follows that
$\lim\limits_{\ell\rightarrow\infty}F(x^{k_\ell})=F^*$. This readily verifies by the constructions above that $\lim\limits_{k\rightarrow\infty}F(x^{k})=F^*$, and thus any limiting point
of $\{x^k\}_{k\in K}$ provides a global minimum to \eqref{prob_ori_1}.

Next we consider the case where $K_0$ is finite and denote $\bar{k}:=\max_{k\in K_0}k$. It follows from Lemma~\ref{decrease} that for any $k>\bar{k}$ we get
\[
F(x^{k+1})-F(x^k)\le-\frac{\gamma\theta(1-\theta)}{2L_1}\left(\frac{(1-\nu)\alpha_k}{1+M+\alpha_k}\right)^2\|\mathcal{G}(x^k)\|^2,
\]
which therefore tells us that
\[
\sum_{k=\bar{k}}^{\infty}\frac{\gamma\theta(1-\theta)}{2L_1}\left(\frac{(1-\nu)\alpha_k}{1+M+\alpha_k}\right)^2\|\mathcal{G}(x^k)\|^2\le F(x^{\bar{k}})-F^*\le 0.
\]
The latter implies in turn that
\[
\lim_{k\rightarrow \infty}\frac{\gamma\theta(1-\theta)}{2L_1}\left(\frac{(1-\nu)\alpha_k}{1+M+\alpha_k}\right)^2\|\mathcal{G}(x^k)\|^2=0.
\]
Remembering the choice of \revise{$\alpha_k=\min \left\{ \bar{\alpha}, c\|\mathcal{G}(x^k)\|^{\rho}\right\}$} with $\bar{\alpha}, c, \rho>0$ ensures that
\[
\lim_{k\rightarrow\infty}\|\mathcal{G}(x^k)\|=0,
\]
hence \eqref{glob} holds. This readily verifies by the continuity of $\mathcal{G}(\cdot)$ that any limiting point of $\{x^k\}_{k\in K}$ provides a global minimum to \eqref{prob_ori_1}.
\end{proof}

We conclude this section with a consequence of Theorem~\ref{algorithm_convergence} giving an easily verifiable condition for the boundedness of the sequence of iterates in Algorithm~\ref{Newton}. Recall
that a function $\ph\colon\R^n\to\oR$ is {\em coercive} if $\ph(x)\to\infty$ provided that $\|x\|\to\infty$.

\begin{corollary}\label{coer} In addition to the standing assumptions imposed above, suppose that the cost function $F$ in \eqref{prob_ori_1} is coercive. Then we have $\lim\limits_{k\rightarrow
\infty}F(x^{k})=F^*$ for the sequence of iterates $\{x^k\}$ generated by Algorithm~{\rm\ref{Newton}}, and any limiting point of $\{x^k\}$ is a global minimizer of \eqref{prob_ori_1}.
\end{corollary}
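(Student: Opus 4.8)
The plan is to reduce the corollary directly to Theorem~\ref{algorithm_convergence}: that theorem already delivers both desired conclusions under the hypothesis that $\{x^k\}$ is bounded, so the entire task is to show that coercivity of $F$ forces boundedness of the iterate sequence. The crucial intermediate claim is that the cost values stay uniformly bounded, namely $F(x^k)\le C$ for every $k$, where $C>F(x^0)$ is the threshold fixed in the initialization of Algorithm~\ref{Newton}. I would therefore first establish this uniform function-value bound, then convert it into an iterate bound via coercivity, and finally quote the theorem.

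To verify the bound $F(x^k)\le C$ I would argue by induction on $k$. The base case holds since $F(x^0)<C$ by the very choice of $C$. For the inductive step, observe that Algorithm~\ref{Newton} produces $x^{k+1}$ in exactly one of two ways. In the unit-step branch of Step~4 one has $x^{k+1}=\hat{x}^k$ together with the explicit acceptance condition $F(\hat{x}^k)\le C$, so $F(x^{k+1})\le C$ immediately. In the line-search branch (Step~5), Lemma~\ref{decrease} yields the monotone decrease $F(x^{k+1})\le F(x^k)$, whence $F(x^{k+1})\le F(x^k)\le C$ by the induction hypothesis. In either case $F(x^{k+1})\le C$, closing the induction.

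With $F(x^k)\le C$ for all $k$ in hand, I would invoke coercivity. The sublevel set $\{x\in\R^n\mid F(x)\le C\}$ is bounded, since otherwise one could find points inside it with norms tending to infinity, contradicting $F(x)\to\infty$ as $\|x\|\to\infty$. As the whole iterate sequence lies in this sublevel set, $\{x^k\}$ is bounded. Then the second part of Theorem~\ref{algorithm_convergence} applies verbatim and gives both $\lim_{k\rightarrow\infty}F(x^k)=F^*$ and the fact that every limiting point of $\{x^k\}$ is a global minimizer of \eqref{prob_ori_1}, completing the argument.

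The only point requiring genuine care is the unit-step branch, where $F$ is not guaranteed to decrease: a proximal Newton step with $t_k=1$ may well raise the objective. Here the uniform bound comes not from any descent property but precisely from the acceptance test $F(\hat{x}^k)\le C$ hard-wired into Step~4, which is exactly the mechanism that keeps the function values from drifting upward during full steps and is the reason the constant $C$ was introduced at initialization. Everything else is routine bookkeeping, so I expect the proof to be short once the two branches are treated separately as above.
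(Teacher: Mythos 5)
Your proposal is correct and follows essentially the same route as the paper's own proof: establish $F(x^k)\le C$ for all $k$ from the acceptance test in Step~4 and the descent property of Step~5, deduce boundedness of $\{x^k\}$ from coercivity via the sublevel set $\{x\mid F(x)\le C\}$, and conclude by Theorem~\ref{algorithm_convergence}. You merely spell out the induction over the two branches that the paper compresses into a single sentence.
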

\begin{proof} According to Steps~4 and 5 of Algorithm~\ref{Newton}, the sequence $\{x^k\}$ generated by the algorithm satisfies the condition $F(x^k)\le C$ for all $k$. Then the coercivity of $F$
implies that the sequence $\{x^k\}$ is bounded. Thus we deduce the conclusions of the corollary from Theorem~\ref{algorithm_convergence}.
\end{proof}\vspace*{-0.25in}

\section{Fast Local Convergence under Metric $q$-Subregularity}\label{sec:5}\vspace*{-0.05in}

This section is devoted to the local convergence of the proximal Newton-type Algorithm~\ref{Newton} under the imposed metric $q$-subregularity in both cases where $q\in(0,1]$ and
$q>1$. In the first case, which is referred to as the H\"older metric subregularity, we do not consider any $q\in(0,1]$, but precisely specify the lower bound of $q$ and respectively modify some
parameters of our algorithm. Namely, for the case where $q = 1$, i.e., the metric subregularity assumption of the subgradient mapping in \eqref{prob_ori_1} holds, the main result here establishes
superlinear local convergence rates depending on the selected exponent $\rho\in(0,1]$ in the algorithm, which gives us the {\em quadratic} convergence in the case where $\rho=1$. Our analysis partly
follows the scheme of \cite{fischer2002local} for a Newtonian algorithm to solve generalized equations with nonisolated solutions under certain Lipschitzian properties of perturbed solution sets. Note that the imposed metric subregularity allows us to avoid limitations of the line search procedure (needed for establishing the global convergence of our algorithm in Section~\ref{sec:2} that is not addressed in \cite{fischer2002local}) to achieve now the fast local convergence.
The imposed metric $q$-subregularity assumption is weaker for $q<1$ than the metric subregularity, but allows us to achieve a local {\em superlinear} (while not quadratic) convergence of the algorithm.
In the other case where $q>1$, we achieve a {\em higher-than-quadratic} rate of the local convergence of the proposed algorithm.\vspace*{0.03in}

Starting with the H\"older metric subregularity, we first provide the following norm estimate of directions $d^k$ in the proposed Algorithm~\ref{Newton}.

\begin{lemma}\label{holder_d_k} Let $\{x^k\}$ be the sequence
generated by Algorithm~{\rm\ref{Newton}}, and let
$\bar{x}\in\mathcal{X}^*$ be any limiting point of the sequence
$\{x^k\}$. In addition to the standing assumptions, suppose that the
subgradient mapping $\nabla f(x)+\partial g(x)$ is metrically
$q$-subregular at $(\bar{x},0)$ for some $q\in(0,1]$, that the
Hessian $\nabla^2f$ is locally Lipschitzian around $\bar{x}$, that
\revise{$\alpha_k=\min \left\{ \bar{\alpha}, c\|\mathcal{G}(x^k)\|^{\rho}\right\}$} with $\bar{\alpha}, c > 0$,
$\rho\in(0,q]$, and $\varrho\ge \rho$ in
Algorithm~{\rm\ref{Newton}}, and that the estimate $\|B_k-\nabla^2
f(x^k)\|\le C_1\,\mathrm{dist}(x^k;\mathcal{X}^*)$ holds with some
constant $C_1>0$. Then there exist positive numbers $\epsilon$ and
$c_1$ such that for $d^k:=\hat{x}^k-x^k$ we have the direction
estimate \begin{equation}\label{dk1} \|d^k\|\le
c_1\,\mathrm{dist}(x^k;\mathcal{X}^*)\;\mbox{ as
}\;x^k\in\mathbb{B}_{\epsilon}(\bar{x}). \end{equation}
\end{lemma}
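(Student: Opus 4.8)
The plan is to recast the approximate optimality of $\hat{x}^k$ as an exact residual for the subgradient inclusion $0\in\nabla f(x)+\partial g(x)$ and then feed that residual into the metric $q$-subregularity at $(\bar{x},0)$, following the Newtonian scheme of \cite{fischer2002local}. First I would apply Lemma~\ref{lem1} to obtain $e_k\in\nabla f(x^k)+H_k d^k+\partial g(\hat{x}^k-e_k)$ with $\|e_k\|\le\nu\|\mathcal{G}(x^k)\|^{1+\varrho}$ (the stronger of the two bounds in \eqref{ek} is available once $x^k$ is close to $\bar{x}$, since then $\|\mathcal{G}(x^k)\|\le 1$ by Proposition~\ref{bound_rxk}). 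Setting $y_k:=\hat{x}^k-e_k$ and adding $\nabla f(y_k)$ to both sides gives the exact residual $w_k:=\nabla f(y_k)-\nabla f(x^k)-H_k d^k+e_k\in\nabla f(y_k)+\partial g(y_k)$. Using the local Lipschitz continuity of $\nabla^2 f$ (modulus $L_2$, say) I would expand $\nabla f(y_k)-\nabla f(x^k)=\nabla^2 f(x^k)(d^k-e_k)+O(\|d^k-e_k\|^2)$, substitute $H_k=B_k+\alpha_k I$ and the hypothesis $\|B_k-\nabla^2 f(x^k)\|\le C_1\,\mathrm{dist}(x^k;\mathcal{X}^*)$, and thereby bound $\|w_k\|\le\big(C_1\,\mathrm{dist}(x^k;\mathcal{X}^*)+\alpha_k\big)\|d^k\|+(1+\Lambda)\|e_k\|+\tfrac{L_2}{2}(\|d^k\|+\|e_k\|)^2$, where $\Lambda$ bounds $\|\nabla^2 f\|$ near $\bar{x}$. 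The point of this arrangement is that, because $\alpha_k\le c\big((2+L_1)\mathrm{dist}(x^k;\mathcal{X}^*)\big)^\rho$ by Proposition~\ref{bound_rxk}, the coefficient multiplying $\|d^k\|$ vanishes as $x^k\to\bar{x}$.

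Next I would extract two estimates. Applying the metric $q$-subregularity of $\nabla f+\partial g$ at $(\bar{x},0)$ to $y_k$ yields $\mathrm{dist}(y_k;\mathcal{X}^*)\le\kappa\|w_k\|^q$, hence, since $\|\hat{x}^k-y_k\|=\|e_k\|$, a bound on $\mathrm{dist}(\hat{x}^k;\mathcal{X}^*)$ of order $\|d^k\|^q$. In parallel, writing $\pi_k:=\mathrm{proj}_{\mathcal{X}^*}(x^k)$, so that $0\in\nabla f(\pi_k)+\partial g(\pi_k)$ and $\|x^k-\pi_k\|=\mathrm{dist}(x^k;\mathcal{X}^*)$, I would compare the inclusion for $\hat{x}^k$ with the one at $\pi_k$ through the monotonicity of $\partial g$. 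After a Taylor expansion of $\nabla f(\pi_k)-\nabla f(x^k)$ and a Cauchy--Schwarz step in the $H_k$- and $\nabla^2 f$-seminorms, together with $\|B_k-\nabla^2 f(x^k)\|\le C_1\,\mathrm{dist}(x^k;\mathcal{X}^*)$, this produces the energy bound $\langle H_k d^k,d^k\rangle\le O\big(\mathrm{dist}(x^k;\mathcal{X}^*)^2\big)$.

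The main obstacle is upgrading these two facts to the Euclidean estimate \eqref{dk1}, and the difficulty here is genuine rather than merely technical. Without strong convexity $\mathcal{X}^*$ may be positive-dimensional, so the energy bound only controls the component of $d^k$ lying in the range of $B_k$ (there $H_k\succeq\tilde\sigma_{\min}(B_k)I$), whereas on the near-kernel of $B_k$ one has merely $H_k\succeq\alpha_k I$ with $\alpha_k\downarrow 0$; a naive triangle inequality through $\mathrm{proj}_{\mathcal{X}^*}(x^k)$ and $\mathrm{proj}_{\mathcal{X}^*}(y_k)$ is useless, because nonexpansiveness of the projection merely reproduces $\|d^k\|$ and cancels. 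What actually pins $d^k$ down is the regularization $\alpha_k I$: it drives the (approximate) subproblem solution toward the solution nearest $x^k$, namely $\pi_k$, so that no drift along $\mathcal{X}^*$ occurs. To turn this into a bound I would use the residual/subregularity estimate to control precisely the complementary (near-kernel) part of $d^k$ that the energy bound misses, exploiting that the driving term $\nabla^2 f(x^k)(x^k-\pi_k)$ lies in $\mathrm{rge}\,\nabla^2 f(x^k)$ up to an $O(\mathrm{dist}(x^k;\mathcal{X}^*)^2)$ defect coming from $\|B_k-\nabla^2 f(x^k)\|\le C_1\mathrm{dist}(x^k;\mathcal{X}^*)$; the assumptions $\rho\le q$ and $\varrho\ge\rho$ are exactly what keep the subregularity exponent and the perturbations $e_k$ subordinate to the leading $\mathrm{dist}(x^k;\mathcal{X}^*)$ scale.

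Finally, I expect the second-order model errors (the term $\tfrac{L_2}{2}\|d^k\|^2$ and the third-order remainder of $f$) to be the last nuisance, since a priori they are of the same order as the regularization gain $\alpha_k\|d^k\|^2$. I would handle them by first deriving a crude smallness bound $\|d^k\|\to 0$ as $x^k\to\bar{x}$, so that these remainders become a genuinely lower-order fraction of $\alpha_k\|d^k\|^2$ (using $\alpha_k\asymp\mathrm{dist}(x^k;\mathcal{X}^*)^\rho$ with $\rho\le 1$), and then bootstrapping the resulting inequality of the shape $c'\|d^k\|^2\le c''\,\mathrm{dist}(x^k;\mathcal{X}^*)^2+(\text{higher order})$ into the linear estimate \eqref{dk1}. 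Choosing $\epsilon$ small enough that the neighborhoods required by Lemma~\ref{lem1}, Proposition~\ref{lem3}, the subregularity modulus, and the Hessian Taylor remainder are simultaneously in force then yields the uniform constant $c_1$ on $\mathbb{B}_\epsilon(\bar{x})$.
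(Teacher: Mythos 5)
Your setup (Lemma~\ref{lem1}, the exact residual $w_k$ at $y_k=\hat x^k-e_k$, the comparison with $\pi_k$ via monotonicity of $\partial g$) matches the paper's opening moves, but the core of the argument is missing, and the two devices you propose to close it do not work. The paper never forms the quadratic ``energy bound'' $\langle H_kd^k,d^k\rangle\le O(\mathrm{dist}(x^k;\mathcal{X}^*)^2)$: instead it uses that $x\mapsto\nabla f(x^k)+H_k(x-x^k)+\partial g(x)$ is \emph{strongly monotone with modulus $\alpha_k$} and pairs its values at $\pi_x^k$ (where the value is the Taylor remainder plus the $B_k$-defect, of size $O(\mathrm{dist}^2)+\alpha_k\,\mathrm{dist}$) and at $\hat x^k-e_k$ (where the value is $e_k-H_ke_k$). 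Cauchy--Schwarz then gives the \emph{first-power} bound $\alpha_k\|\pi_x^k-\hat x^k+e_k\|\le O(\mathrm{dist}^2)+\alpha_k\,O(\mathrm{dist})+O(\|e_k\|)$. The second missing ingredient is the \emph{lower} bound on $\alpha_k$: by Proposition~\ref{lem3} one has $\mathrm{dist}(x^k;\mathcal{X}^*)\le\kappa_1\|\mathcal{G}(x^k)\|^{q}$, so $\alpha_k=c\|\mathcal{G}(x^k)\|^{\rho}\ge c\kappa_1^{-\rho/q}\,\mathrm{dist}(x^k;\mathcal{X}^*)^{\rho/q}\ge c'\,\mathrm{dist}(x^k;\mathcal{X}^*)$ near $\bar x$, precisely because $\rho\le q$. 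Dividing the first-power bound by $\alpha_k$ then leaves $O(\mathrm{dist})$, and \eqref{dk1} follows from the triangle inequality. You invoke only the \emph{upper} bound $\alpha_k\le c\big((2+L_1)\mathrm{dist}\big)^{\rho}$ (to make the coefficient of $\|d^k\|$ in $\|w_k\|$ vanish), and you use the $q$-subregularity only to bound $\mathrm{dist}(y_k;\mathcal{X}^*)$, which, as you yourself observe, cannot control $\|d^k\|$ because of drift along $\mathcal{X}^*$.

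Concretely, your plan stalls as follows. The energy bound yields only $\alpha_k\|d^k\|^2\le O(\mathrm{dist}^2)$, hence $\|d^k\|=O(\mathrm{dist}^{1/2})$ even after inserting the correct lower bound on $\alpha_k$; and the terminal inequality $c'\|d^k\|^2\le c''\,\mathrm{dist}^2+(\text{higher order})$ with a $k$-uniform $c'>0$ is not obtainable from anything you have derived, since the only uniform quadratic lower bound on $\langle H_kd^k,d^k\rangle$ is $\alpha_k\|d^k\|^2$ with $\alpha_k\downarrow 0$. The range/kernel decomposition of $B_k$ is a red herring: $\ker B_k$ has no a priori relation to $\mathcal{X}^*$ or to $\partial g$, and the quantity that controls the ``near-kernel part'' of $d^k$ is exactly the strong monotonicity estimate above, which needs no decomposition and no bootstrap. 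Once you replace the energy bound by the first-power estimate and add the lower bound on $\alpha_k$, your remaining bookkeeping (the treatment of $e_k$ via $\varrho\ge\rho$ and Proposition~\ref{bound_rxk}) goes through and reproduces the paper's proof.
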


\begin{proof} Remembering that $\hat{x}^k$ is an inexact solution to
\eqref{subproblem} satisfying conditions
\eqref{inexact_condition_1}, we apply Lemma~\ref{lem1} and find a
vector $e^k$ such that the relationships in \eqref{ek} hold.
Denoting by $\pi_x^k$ the (unique) projection of $x^k$ onto the
solution map $\mathcal{X}^*$, we get by basic convex analysis that
$0\in\nabla f(\pi_x^k)+\partial g(\pi_x^k)$ and thus
\begin{equation*} \nabla f(x^k)-\nabla
f(\pi_x^k)+H_k(\pi_x^k-x^k)\in\nabla
f(x^k)+H_k(\pi_x^k-x^k)+\partial g(\pi_x^k). \end{equation*} Since
the mapping $\nabla f(x^k)+H_k(\cdot-x^k)+\partial g(\cdot)$ is
strongly monotone on $\R^n$ with constant $\alpha_k$, we have
\begin{equation*}\label{mon} \langle\nabla f(x^k)-\nabla
f(\pi_x^k)+H_k(\pi_x^k-x^k)-e_k+H_ke_k,\pi_x^k-\hat{x}^k+e_k\rangle\ge\alpha_k\|\pi_x^k-\hat{x}^k+e_k\|^2.
\end{equation*} Combining the above with the algorithm constructions
gives us the estimates \begin{equation}\label{mon2}
\begin{aligned}
\|\pi_x^k-\hat{x}^k+e_k\|\le\frac{1}{\alpha_k}\|\nabla f(x^k)-\nabla f(\pi_x^k)+H_k(\pi_x^k-x^k)-e_k+H_ke_k\|\\
\le\frac{1}{\alpha_k}\left(\|\nabla f(x^k)+\nabla^2 f(x^k)(\pi_x^k-x^k)-\nabla f(\pi_x^k)\|+\|(H_k-\nabla^2 f(x^k))(\pi_x^k-x^k)\|+\|e_k-H_ke_k\|\right)\\
\le\frac{1}{\alpha_k}\Big(\|\nabla f(x^k)+\nabla^2 f(x^k)(\pi_x^k-x^k)-\nabla f(\pi_x^k)\|+\|B_k-\nabla^2 f(x^k)\|\cdot\|x^k-\pi_x^k\|\\
\quad+\alpha_k\|x^k-\pi_x^k\|+(1+M)\|e_k\|\Big)\\
\le\frac{1}{\alpha_k}\Big(\|\nabla f(x^k)+\nabla^2 f(x^k)(\pi_x^k-x^k)-\nabla f(\pi_x^k)\|+\|B_k-\nabla^2 f(x^k)\|\,\mathrm{dist}(x^k;\mathcal{X}^*)\\
\quad+\alpha_k\mathrm{dist}(x^k;\mathcal{X}^*)+(1+M)\nu\|\mathcal{G}(x^k)\|^{1+\varrho}\Big),
\end{aligned} \end{equation} where the third inequality follows from
the choice of $H_k=B_k+\alpha_kI$ while the fourth inequality is
implied by $\|e_k\|\le\nu\|\mathcal{G}(x^k)\|^{1+\varrho}$. Since
the Hessian mapping $\nabla^2f$ is locally Lipschitzian around
$\bar{x}$, there exist positive numbers $\epsilon_2$ and $L_2$ such
that for any $x,y\in\mathbb{B}_{\epsilon_2}(\bar{x})$ we get \[
\|\nabla f(x)+\nabla^2 f(x)(y-x)-\nabla
f(y)\|\le\frac{L_2}{2}\|x-y\|^2.
\]
Furthermore, the imposed
assumption that $\|B_k-\nabla^2 f(x^k)\|\le
C_1\,\mathrm{dist}(x^k;\mathcal{X}^*)$ and the fact that
$x^k\in\mathbb{B}_{\epsilon_2}(\bar{x})$ implying
$\pi_x^k\in\mathbb{B}_{\epsilon_2}(\bar{x})$ give us the estimate
\begin{equation*} \begin{aligned}
\|\pi_x^k-\hat{x}^k+e_k\|&\le\frac{1}{\alpha_k}\Big((\frac{L_2}{2}+C_1)\mathrm{dist}(x^k;\mathcal{X}^*)^2
+
\alpha_k\mathrm{dist}(x^k;\mathcal{X}^*)+(1+M)\nu\|\mathcal{G}(x^k)\|^{1+\varrho}\Big)
\end{aligned} \end{equation*} provided
$x^k\in\mathbb{B}_{\epsilon_2}(\bar{x})$. Next we employ the
relationships \begin{equation}\label{dist-est2}
\|d^k\|=\|\hat{x}^k-x^k\|\le\|\pi_x^k-\hat{x}^k+e_k\|+\|\pi_x^k-x^k\|+\|e_k\|\;\mbox{
with }\;\|e_k\|\le\nu\|\mathcal{G}(x^k)\|^{1+\varrho} \end{equation}
together with
$\|\mathcal{G}(x^k)\|\le(2+L_1)\,\mathrm{dist}(x^k;\mathcal{X}^*)$
by Proposition~\ref{bound_rxk} to obtain that
\begin{equation}\label{dist-est} \begin{aligned}
\alpha_k\|d^k\|&\le\left(\frac{L_2}{2}+C_1\right)\mathrm{dist}(x^k;\mathcal{X}^*)^2+2\alpha_k\,\mathrm{dist}(x^k;\mathcal{X}^*)\\
&\quad+(1+M+\alpha_k)\nu\|\mathcal{G}(x^k)\|^{\rho}(2+L_1)^{1+\varrho-\rho}
\mathrm{dist}(x^k;\mathcal{X}^*)^{1+\varrho-\rho}\end{aligned}
\end{equation} provided that
$x^k\in\mathbb{B}_{\epsilon_2}(\bar{x})$. The assumed metric
$q$-subregularity of $\nabla f(x)+\partial g$ gives us by
Proposition~\ref{lem3} numbers $\epsilon_1,\kappa_1>0$ with
\begin{equation*}
\mathrm{dist}(x;\mathcal{X}^*)\le\kappa_1\|\mathcal{G}(x)\|^{q}\;\mbox{
for all }\;x\in\mathbb{B}_{\epsilon_1}(\bar{x}). \end{equation*}
Supposing without loss of generality that $\epsilon_1\le
\min\{1,\epsilon_2\}$ \revise{and $\alpha_k=\min \left\{ \bar{\alpha}, c\|\mathcal{G}(x^k)\|^{\rho}\right\} = c\|\mathcal{G}(x^k)\|^{\rho}$ when $x^k\in\mathbb{B}_{\epsilon_1}(\bar{x})$ and remembering that
$\rho\in (0,q]$ imply that }%
\begin{equation}\label{ak}
\alpha_k=c\|\mathcal{G}(x^k)\|^{\rho}\ge
c\kappa_1^{-\frac{\rho}{q}}\mathrm{dist}(x;\mathcal{X}^*)^{\frac{\rho}{q}}
\ge c\kappa_1^{-\frac{\rho}{q}}\mathrm{dist}(x;\mathcal{X}^*)
\;\mbox{ as }\;x^k\in\mathbb{B}_{\epsilon_1}(\bar{x}).
\end{equation} Since $\rho\in (0,q]$ and $\varrho\ge \rho$, we
deduce from \eqref{dist-est} and \eqref{ak} the existence of
positive numbers $\epsilon$ and $c_1$ ensuring the fulfillment of
estimate \eqref{dk1} claimed in the lemma. \end{proof}

Next, under the imposed H\"older metric subregularity, we show that
the set $K_0$ defined in \eqref{K} is infinite.

\begin{lemma}\label{K0} Let $\{x^k\}$ be the sequence generated by
Algorithm~{\rm\ref{Newton}}, and let $\bar{x}\in\mathcal{X}^*$ be
any limiting point of $\{x^k\}$. In addition to the
standing assumptions, suppose that the mapping $\nabla
f(x)+\partial g(x)$ is metrically $q$-subregular at $(\bar{x},0)$
for some $q\in(0,1]$, that the Hessian $\nabla^2 f$ is locally
Lipschitzian around $\bar{x}$, that
\revise{$\alpha_k=\min \left\{ \bar{\alpha}, c\|\mathcal{G}(x^k)\|^{\rho}\right\}$} with $\bar{\alpha}, c > 0$,
$\rho\in(0,q]$, and $\varrho\ge \rho$ in
Algorithm~{\rm\ref{Newton}}, and that the estimate $\|B_k-\nabla^2
f(x^k)\|\le C_1\,\mathrm{dist}(x^k;\mathcal{X}^*)$ holds with some
constant $C_1>0$. Then the set $K_0$ defined in \eqref{K} is
infinite. \end{lemma}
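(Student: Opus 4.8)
The plan is to argue by contradiction. Suppose that $K_0$ in \eqref{K} is finite and set $\bar{k}:=\max K_0$; we may also assume throughout that $\mathcal{G}(x^k)\ne 0$ for every $k$, since otherwise some iterate is already a global minimizer and the assertion is vacuous. By the definition of $K_0$, for every index $k\ge\bar{k}$ we have $k+1\notin K_0$, which means that Step~5 is applied at each such iteration. Consequently the monitoring quantity of Step~4 is never refreshed and stays frozen at $\bar{\vartheta}:=\vartheta_{\bar{k}}$ for all $k\ge\bar{k}$. Moreover every value ever assigned to $\vartheta$ equals $\|\mathcal{G}(\cdot)\|$ evaluated at an iterate (namely at $x^0$, or at some accepted $\hat{x}^k=x^{k+1}$), and hence is strictly positive under the standing convention above; in particular $\bar{\vartheta}>0$. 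The goal is then to exhibit a single index $k>\bar{k}$ at which both tests in Step~4 succeed, which forces $k+1\in K_0$ and contradicts the maximality of $\bar{k}$.

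To produce such an index I would exploit that $\bar{x}$ is a limiting point of $\{x^k\}$: choose a subsequence $x^{k_j}\to\bar{x}$, and by discarding finitely many terms arrange that $k_j>\bar{k}$ and $x^{k_j}\in\mathbb{B}_{\epsilon}(\bar{x})$, where $\epsilon$ is the radius furnished by Lemma~\ref{holder_d_k}. Since the hypotheses of that lemma coincide with those assumed here, it applies and gives
\[
\|d^{k_j}\|\le c_1\,\mathrm{dist}(x^{k_j};\mathcal{X}^*)\le c_1\|x^{k_j}-\bar{x}\|\longrightarrow 0,
\]
where the middle inequality uses $\bar{x}\in\mathcal{X}^*$. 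Therefore $\hat{x}^{k_j}=x^{k_j}+d^{k_j}\to\bar{x}$ as $j\to\infty$.

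It remains to verify the two conditions of Step~4 along this subsequence for all large $j$. For the first, the $(2+L_1)$-Lipschitz continuity of $\mathcal{G}$ from Proposition~\ref{bound_rxk0} combined with $\mathcal{G}(\bar{x})=0$ yields $\|\mathcal{G}(\hat{x}^{k_j})\|\le(2+L_1)\|\hat{x}^{k_j}-\bar{x}\|\to 0$, so that $\|\mathcal{G}(\hat{x}^{k_j})\|\le\sigma\bar{\vartheta}$ eventually; this is precisely where the positivity $\bar{\vartheta}>0$ is indispensable, as the threshold is a fixed positive constant. For the second, continuity of $f$ and continuity of $g$ on $\dom g$ (recall $\hat{x}^{k_j}\in\dom g$ and $\bar{x}\in\mathcal{X}^*\subset\dom g$) give $F(\hat{x}^{k_j})\to F(\bar{x})=F^*$, and since $F^*\le F(x^0)<C$ we obtain $F(\hat{x}^{k_j})\le C$ for all large $j$. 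Hence at some $k_j>\bar{k}$ both tests of Step~4 hold, Step~5 is skipped, and $k_j+1\in K_0$, contradicting $\bar{k}=\max K_0$. Thus $K_0$ is infinite.

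The substantive work lies entirely in the direction bound $\|d^k\|\le c_1\,\mathrm{dist}(x^k;\mathcal{X}^*)$, which is the content of Lemma~\ref{holder_d_k} established under the same assumptions; once it is at hand, the argument reduces to continuity/Lipschitz passages and to the bookkeeping observation that $\vartheta$ freezes after $\bar{k}$. The one point that must be stated with care—and the only genuine obstacle at this stage—is the strict positivity of the frozen threshold $\bar{\vartheta}$, which rests on the convention that $\mathcal{G}$ does not vanish along $\{x^k\}$ and guarantees that the acceptance test in Step~4 is eventually triggered by $\hat{x}^{k_j}\to\bar{x}$.
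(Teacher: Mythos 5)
Your proof is correct and follows essentially the same route as the paper's: argue by contradiction, invoke Lemma~\ref{holder_d_k} to force $\|d^{k_j}\|\to 0$ along a subsequence approaching $\bar{x}$, and conclude that the Step~4 test must eventually succeed. If anything, your version is slightly more careful than the paper's (which reaches $\liminf_k\|\mathcal{G}(\hat{x}^k)\|=0$ via the line-search decrease of Lemma~\ref{decrease} and leaves the checks $F(\hat{x}^k)\le C$ and $\vartheta_{\bar{k}}>0$ implicit), whereas you verify both conditions of Step~4 explicitly.
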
 \begin{proof} On the contrary, suppose that
$K_0$ is finite and denote $\bar{k}:=\max_{k\in K_0}k$. Arguing as
in the proof of Lemma~\ref{algorithm_convergence} tells us that
\[\lim_{k\rightarrow\infty}\|\mathcal{G}(x^k)\|=0.\] According to
Lemma~\ref{holder_d_k}, there exist positive numbers $\epsilon$ and
$c_1$ such that for $d^k:=\hat{x}^k-x^k$ we have \begin{equation*}
\|d^k\|\le c_1\,\mathrm{dist}(x^k;\mathcal{X}^*)\;\mbox{ as
}\;x^k\in\mathbb{B}_{\epsilon}(\bar{x}), \end{equation*} which
implies that $\liminf_{k\rightarrow\infty} \|d^k\| = 0$. Combining
this with the $(2+L_1)$-Lipschitz continuity of $\|\mathcal{G}(x)\|$
as follows from Proposition \ref{bound_rxk0} gives us
\begin{equation*}
\liminf_{k\rightarrow\infty}\|\mathcal{G}(\hat{x}^k)\| \le
\lim_{k\rightarrow\infty} \|\mathcal{G}(x^k)\| +
\liminf_{k\rightarrow\infty} (2+L_1)\|d^k\| = 0. \end{equation*}
Hence there exists $k>\bar{k}$ such that $k\in K_0$; a contradiction
showing that the set $K_0$ is infinite. \end{proof}

Having Lemmas~\ref{holder_d_k}, \ref{K0}, and the previous estimates
in hand, next we derive the following fast local convergence result
for Algorithm~\ref{Newton} with a particular choice of parameters
under the imposed H\"older metric subregularity of the subgradient
mapping $\nabla f+\partial g$ with an appropriate factor $q$.

\begin{theorem}\label{Newton_thm1} Let $\{x^k\}$ be the sequence
generated by Algorithm~{\rm\ref{Newton}}, and let
$\bar{x}\in\mathcal{X}^*$ be any limiting point of the sequence
$\{x^k\}_{k\in K_0}$, where $K_0$ is defined in \eqref{K}. In
addition to the standing assumptions, suppose that the subgradient
mapping $\nabla f(x)+\partial g(x)$ is metrically $q$-subregular at
$(\bar{x},0)$ with $q\in(\frac{1}{2},1]$, that the Hessian mapping
$\nabla^2 f$ is locally Lipschitzian around $\bar{x}$, that
\revise{$\alpha_k=\min \left\{ \bar{\alpha}, c\|\mathcal{G}(x^k)\|^{\rho}\right\}$}, $\bar{\alpha}, c > 0$, $\rho \in
[\frac{1}{2},q]$, and $\varrho\ge \rho$ in
Algorithm~{\rm\ref{Newton}}, and that $\|B_k-\nabla^2
f(x^k)\|=O(\|\mathcal{G}(x^k)\|)$. Then there exists a natural
number $k_0$ such that \[ t_k=1\;\mbox{ for all }\;k\ge k_0, \] and
the sequence $\{x^k\}$ converges to the point $\bar{x}$.
Furthermore, this convergence is superlinear with the rate of
$\rho+q > 1$ in the sense that there exist a positive number $C_0$
and a natural number $k_0$ for which \begin{equation}\label{super}
\|\mathcal{G}(x^{k+1})\| \le C_0 \|\mathcal{G}(x^k)\|^{\rho+q}
\;\mbox{ whenever }\;k\ge k_0, \end{equation} and
$\mathrm{dist}(x^k;\mathcal{X}^*)$ converges R-superlinearly to $0$
in the sense that $\lim_{k\rightarrow \infty}
\sqrt[k]{\mathrm{dist}(x^k;\mathcal{X}^*)} = 0$.

In particular, when the subgradient mapping $\nabla f(x)+\partial
g(x)$ is metrically subregular at $(\bar{x},0)$, i.e., $q = 1$, and
when $\rho=1$, we have the quadratic convergence of $x^k\to\ox$ with
the exponent $\rho + q=2$ in \eqref{super}, and there exists a
positive constant $\tilde{C}_0$ such that
\begin{equation}\label{quad} \mathrm{dist}(x^{k+1};\mathcal{X}^*)
\le \tilde{C}_0 \mathrm{dist}(x^k;\mathcal{X}^*)^2 \;\mbox{ whenever
}\;k\ge k_0. \end{equation} \end{theorem}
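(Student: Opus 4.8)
The plan is to establish the result in three stages: a sharp one-step contraction estimate for the prox-gradient residual under a unit step, a proof that Step~4 eventually always accepts the unit step (so the iterates stay near $\bar{x}$ and converge to it), and finally the extraction of the asymptotic rates. The heart of the matter is the one-step estimate, so I would begin there. Assume $x^k$ lies in a small ball around $\bar x$ and the unit step is taken, i.e. $x^{k+1}=\hat x^k$. Starting from Lemma~\ref{lem1}, I would rewrite the inclusion there as $e_k-\nabla f(x^k)-H_kd^k\in\partial g(\hat x^k-e_k)$ with $d^k:=\hat x^k-x^k$, set $\xi_k:=\hat x^k-e_k$, and add $\nabla f(\xi_k)$ to both sides to obtain $\nabla f(\xi_k)-\nabla f(x^k)-H_kd^k+e_k\in\nabla f(\xi_k)+\partial g(\xi_k)$. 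By Proposition~\ref{bound_rxk0} applied at $\xi_k$, together with the nonexpansiveness of $\mathcal{G}$ (to pass from $\xi_k$ back to $\hat x^k$ at the cost of $(2+L_1)\|e_k\|$), it then suffices to bound the norm of this residual. A second-order Taylor expansion of $\nabla f$, using the local Lipschitz continuity of $\nabla^2 f$ and $H_k=B_k+\alpha_kI$, splits the residual into the four terms $(\nabla^2 f(x^k)-B_k)d^k$, $-\alpha_kd^k$, $(I-\nabla^2 f(x^k))e_k$, and a quadratic remainder $O(\|d^k-e_k\|^2)$.

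The next step is the bookkeeping that produces the exponent $\rho+q$. I would bound each term using $\|d^k\|\le c_1\,\mathrm{dist}(x^k;\mathcal{X}^*)$ from Lemma~\ref{holder_d_k}, the assumption $\|B_k-\nabla^2 f(x^k)\|=O(\|\mathcal{G}(x^k)\|)$, the identity $\alpha_k=c\|\mathcal{G}(x^k)\|^{\rho}$ valid in the ball, the estimate $\|e_k\|\le\nu\|\mathcal{G}(x^k)\|^{1+\varrho}$, and the H\"older bound $\mathrm{dist}(x^k;\mathcal{X}^*)\le\kappa_2\|\mathcal{G}(x^k)\|^{q}$ from Proposition~\ref{lem3}. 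Converting every quantity into a power of $\|\mathcal{G}(x^k)\|$ yields exponents $1+q$, $\rho+q$, $1+\rho$ (here $\varrho\ge\rho$ is used), and $2q$, respectively. Since $q\in(\tfrac12,1]$ and $\rho\in[\tfrac12,q]$, one checks $\rho+q\le\min\{1+q,\,1+\rho,\,2q\}$, so the residual, hence $\|\mathcal{G}(\hat x^k)\|$, is $O(\|\mathcal{G}(x^k)\|^{\rho+q})$; this is exactly \eqref{super}, and the constraints $q>\tfrac12$, $\rho\ge\tfrac12$ are precisely what force $\rho+q>1$.

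With \eqref{super} in hand, I would show unit steps are eventually accepted. Since $K_0$ is infinite by Lemma~\ref{K0}, the point $\bar x$ is approached along $K_0$, so I can choose $k_*\in K_0$ with $x^{k_*}$ in the ball and $\|\mathcal{G}(x^{k_*})\|\le\delta$, where $\delta$ is fixed so that $C_0\delta^{\rho+q-1}\le\sigma$; note that $k_*\in K_0$ forces $\vartheta_{k_*}=\|\mathcal{G}(x^{k_*})\|$. I then argue by induction: the one-step estimate gives $\|\mathcal{G}(\hat x^k)\|\le C_0\|\mathcal{G}(x^k)\|^{\rho+q}\le\sigma\|\mathcal{G}(x^k)\|=\sigma\vartheta_k$, while continuity of $F$ on $\dom g$ with $F(\bar x)=F^*<C$ gives $F(\hat x^k)\le C$ near $\mathcal{X}^*$; hence Step~4 fires, $t_k=1$, $k+1\in K_0$, and $\vartheta_{k+1}=\|\mathcal{G}(x^{k+1})\|\le\sigma\vartheta_k$. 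The residuals contract geometrically, and since $\|x^{k+1}-x^k\|=\|d^k\|\le c_1\kappa_2\|\mathcal{G}(x^k)\|^{q}$ is summable, the iterates remain in the ball (closing the induction) and form a Cauchy sequence whose limit lies in $\mathcal{X}^*$ and, having $\bar x$ as a cluster point, equals $\bar x$.

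Finally, for the rates: writing $\beta:=\rho+q>1$ and rescaling $b_k:=C_0^{1/(\beta-1)}\|\mathcal{G}(x^k)\|$ turns \eqref{super} into $b_{k+1}\le b_k^{\beta}$, so once $b_{k_*}<1$ the residual decays doubly exponentially; combined with $\mathrm{dist}(x^k;\mathcal{X}^*)\le\kappa_2\|\mathcal{G}(x^k)\|^{q}$ this gives $\sqrt[k]{\mathrm{dist}(x^k;\mathcal{X}^*)}\to0$, i.e. R-superlinear convergence. In the quadratic case $q=\rho=1$, \eqref{super} reads $\|\mathcal{G}(x^{k+1})\|\le C_0\|\mathcal{G}(x^k)\|^{2}$; bounding the right side by Proposition~\ref{bound_rxk} and the left side by Proposition~\ref{lem3} yields $\mathrm{dist}(x^{k+1};\mathcal{X}^*)\le\kappa_2 C_0(2+L_1)^2\,\mathrm{dist}(x^k;\mathcal{X}^*)^2$, which is \eqref{quad} with $\tilde C_0:=\kappa_2 C_0(2+L_1)^2$. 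I expect the main obstacle to be the delicate term-by-term accounting in the one-step estimate, specifically handling the shift by $e_k$ (so the residual is evaluated at $\hat x^k-e_k$) and verifying that $\rho+q$ is genuinely the minimal exponent over the admissible parameter ranges; the remaining geometric-series and continuity arguments are routine.
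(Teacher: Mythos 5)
Your proposal is correct and follows essentially the same route as the paper's proof: you form the shifted point $\hat x^k-e_k$, bound the resulting residual in $\nabla f+\partial g$ via the Taylor remainder, the $B_k$-approximation error, the $\alpha_k$ term, and the inexactness term, convert everything to powers of $\|\mathcal{G}(x^k)\|$ using Lemma~\ref{holder_d_k} and Proposition~\ref{lem3} to isolate the exponent $\rho+q$, and then run the same induction on Step~4 acceptance with the summable $\|d^k\|\le c_1\kappa_2\|\mathcal{G}(x^k)\|^q$ bound to get convergence to $\bar x$ and the stated rates. Your term-by-term decomposition of the residual is an algebraically equivalent regrouping of the paper's estimate \eqref{Newton_thm_2_eq1}--\eqref{assump_Rk}, and the remaining steps match the paper's argument.
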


\begin{proof} Observe first that the assumed metric
$q$-subregularity of the mapping $\nabla f(x)+\partial g(x)$ at
$(\bar{x},0)$ gives us positive numbers $\epsilon_1$ and $\kappa_1$
such that for all $p$ near $0\in\R^n$ we have the inclusion
\begin{equation}\label{H_error_bound_eq}
\Sigma(p)\cap\mathbb{B}_{\epsilon_1}(\bar{x})\subset\mathcal{X}^*+\kappa_1\|p\|^{q}\mathbb{B}
\;\mbox{ with }\;\Sigma(p):=\big\{x\in\mathbb{R}^n~\big|~p\in\nabla
f(x)+\partial g(x)\big\}. \end{equation} Employing
Proposition~\ref{lem3} allows us to find $\kappa_2>1$ ensuring the
estimate \begin{equation}\label{thm3_eq2}
\mathrm{dist}(x;\mathcal{X}^*)\le\kappa_2\|\mathcal{G}(x)\|^{q}\;\mbox{
whenever }\;x\in\mathbb{B}_{\epsilon_1}(\bar{x}). \end{equation}
Since $\|B_k-\nabla^2f(x^k)\|=O(\|\mathcal{G}(x^k)\|)$, we deduce
from Proposition~\ref{bound_rxk} the existence of $C_1>0$ with
\begin{equation}\label{bk} \|B_k-\nabla^2 f(x^k)\|\le
C_1\,\mathrm{dist}(x^k;\mathcal{X}^*). \end{equation} Recalling that
$\hat{x}^k$ is an inexact solution of \eqref{subproblem} satisfying
\eqref{inexact_condition_1} and using Lemma~\ref{lem1} give us
\begin{equation*} e_k\in\nabla f(x^k)+H_k(\hat{x}^k-x^k)+\partial
g(\hat{x}^k-e_k)\;\mbox{ with
}\;\|e_k\|\le\nu\min\big\{\|\mathcal{G}(x^k)\|,\|\mathcal{G}(x^k)\|^{1+\varrho}\big\}.
\end{equation*} By setting $\tilde{x}^k:=\hat{x}^k-e_k$, we have the
inclusion \[e_k-H_ke_k\in\nabla f(x^k)+H_k(\tilde{x}^k-x^k)+\partial
g(\tilde{x}^k), \] which implies therefore that
\begin{equation}\label{Rk} \mathcal{R}_k(\tilde{x}^k, x^k):=\nabla
f(\tilde{x}^k)-\nabla f(x^k)-H_k(\tilde{x}^k-x^k)+e_k-H_k
e_k\in\nabla f(\tilde{x}^k)+\partial g(\tilde{x}^k). \end{equation}
The latter reads, by the above definition of the perturbed solution
map $\Sigma(p)$, that
$\tilde{x}^k\in\Sigma(\mathcal{R}_k(\tilde{x}^k,x^k))$. Since
$\nabla^2 f$ is locally Lipschitzian around $\bar{x}$, there exist
numbers $L_2,\epsilon_2>0$ such that
\begin{equation}\label{newton:theorem2_lip} \|\nabla f(x)+\nabla^2
f(x)(y-x)-\nabla f(y)\|\le\frac{L_2}{2}\|x-y\|^2\;\mbox{ for any
}\;x,y\in\mathbb{B}_{\epsilon_2}(\bar{x}). \end{equation} Then
choose by Lemma~\ref{holder_d_k} a small number $0 <\ve_1
<\min\{1,\epsilon_2\}$ such that
\begin{equation}\label{Newton_thm2_dk} \|d^k\|\le c_1
\mathrm{dist}(x^k;\mathcal{X}^*) \;\mbox{ for all
}\;x^k\in\mathbb{B}_{\epsilon_1}(\bar{x}) \end{equation} with some
$c_1>0$. Since
$\|\tilde{x}^k-\bar{x}\|\le\|x^k-\bar{x}\|+\|d^k\|+\|e_k\|$ with
$\|d^k\|\rightarrow 0$ and $\|e_k\|\rightarrow 0$ when
$x^k\rightarrow\bar{x}$ as $k\to\infty$, we find
$0<\epsilon_3\le\epsilon_1$ such that
$\tilde{x}^k\in\mathbb{B}_{\epsilon_1}(\bar{x})$ whenever
$x^k\in\mathbb{B}_{\epsilon_3}(\bar{x})$. We also assume that
$\epsilon_3$ is sufficiently small with $\|\mathcal{G}(x)\| < 1$ for
all $x\in\mathbb{B}_{\epsilon_3}(\bar{x})$. This leads us to the
relationships \begin{equation}\label{Newton_thm_2_eq1}
\begin{aligned}
\|\mathcal{R}_k(\tilde{x}^k,x^k)\|&=\|\nabla f(\tilde{x}^k)-\nabla f(x^k)-H_k(\tilde{x}^k-x^k)+e_k-H_ke_k\|\\
&=\|\nabla f(\tilde{x}^k)-\nabla f(x^k)-(B_k+\alpha_kI)(\tilde{x}^k-x^k)+e_k-H_ke_k\|\\
&\le\|\nabla f(\tilde{x}^k)-\nabla f(x^k)-\nabla^2 f(x^k)(\tilde{x}^k-x^k)\|+\|B_k-\nabla^2 f(x^k)\|\cdot\|\tilde{x}^k-x^k\|+\alpha_k\|\tilde{x}^k-x^k\|\\
&\quad+(1+M)\|e_k\|\\
&\le\frac{L_2}{2}\|\tilde{x}^k-x^k\|^2+C_1\,\mathrm{dist}(x^k;\mathcal{X}^*)\|\tilde{x}^k-x^k\|
+\alpha_k\|\tilde{x}^k-x^k\|+(1+M)\nu\|\mathcal{G}(x^k)\|^{1+\varrho}
\end{aligned} \end{equation} if
$x^k\in\mathbb{B}_{\epsilon_3}(\bar{x})$, where the second
inequality follows from \eqref{newton:theorem2_lip}, $\|B_k-\nabla^2
f(x^k)\|\le C_1\,\mathrm{dist}(x^k;\mathcal{X}^*)$, and
$\|e_k\|\le\nu\|\mathcal{G}(x^k)\|^{1+\varrho}$. We have by
Proposition~\ref{bound_rxk} that
$\|e_k\|\le\nu\|\mathcal{G}(x^k)\|\le\nu(2+L_1)\mathrm{dist}(x^k;\mathcal{X}^*)$
for this choice of $x^k$. Thus it follows that \[
\|\tilde{x}^k-x^k\|\le\|\hat{x}^k-x^k\|+\|e^k\|=\|d^k\|+\|e_k\|\le\|d^k\|+\nu(2+L_1)\mathrm{dist}(x^k;\mathcal{X}^*),
\] which being combined with \eqref{Newton_thm_2_eq1},
Proposition~\ref{bound_rxk}, and $\varrho\ge \rho$ gives us $c_2>0$
such that \[ \|\mathcal{R}_k(\tilde{x}^k,x^k)\| \le
c_2\|d^k\|^2+c_2\,\mathrm{dist}(x^k;\mathcal{X}^*)^2+c_2\alpha_k\mathrm{dist}(x^k;\mathcal{X}^*)+\alpha_k\|d^k\|
\;\mbox{ for all }\;x^k\in\mathbb{B}_{\epsilon_3}(\bar{x}).\] Then
the direction estimate \eqref{Newton_thm2_dk} together with
\eqref{thm3_eq2} and the one of
\revise{$\alpha_k=\min \left\{ \bar{\alpha}, c\|\mathcal{G}(x^k)\|^{\rho}\right\}$} ensures the existence of a
positive constant $c_3$ ensuring the estimates
\begin{equation}\label{assump_Rk} \begin{aligned}
\|\mathcal{R}_k(\tilde{x}^k,x^k)\|  & \le  c_2 (c_1^2 + 1) \mathrm{dist}(x^k;\mathcal{X}^*)^2 +\alpha_k\left(c_1 + c_2\right)\mathrm{dist}(x^k;\mathcal{X}^*) \\
& \le  c_2 (c_1^2 + 1) \kappa_2^2\|\mathcal{G}(x^k)\|^{2q} +\alpha_k\left(c_1 + c_2 \right)\kappa_2\|\mathcal{G}(x^k)\|^{q} \\
& \le  c_3 \max\big\{ \|\mathcal{G}(x^k)\|^{2q}, \|\mathcal{G}(x^k)\|^{\rho+q}\big\} \\
&\le  c_3 \|\mathcal{G}(x^k)\|^{\rho+q} \;\mbox{ for all
}\;x^k\in\mathbb{B}_{\epsilon_3}(\bar{x}), \end{aligned}
\end{equation} where the last inequality follows from
$\|\mathcal{G}(x^k)\| \le 1$ and $\rho \le q$. Recalling that
$\mathcal{R}_k(\tilde{x}^k,x^k) \in\nabla f(x)+\partial g(x)$ and
using $\|\mathcal{G}(x)\|\le\mathrm{dist}(0;\nabla f(x)+\partial
g(x))$, which comes from Proposition \ref{bound_rxk0}, yield \[
\|\mathcal{G}(\tilde{x}^k)\| \le \|\mathcal{R}_k(\tilde{x}^k,x^k)\|
\le c_3 \|\mathcal{G}(x^k)\|^{\rho+q}\;\mbox{ for all
}\;x^k\in\mathbb{B}_{\epsilon_3}(\bar{x}). \] Combining the latter
with the $(2+L_1)$-Lipschitz continuity of $\|\mathcal{G}(x)\|$,
which comes from Proposition \ref{bound_rxk0}, and with
$\|e_k\|\le\nu\|\mathcal{G}(x^k)\|^{1+\varrho}$ as $\varrho\ge
\rho$, gives us \begin{equation}\label{thm1:sup}
\|\mathcal{G}(\hat{x}^k)\| \le (c_3 + (2+L_1)\nu)
\|\mathcal{G}(x^k)\|^{\rho+q}, \end{equation} and thus we arrive at
the estimate \[ \|\mathcal{G}(\hat{x}^k)\| \le (c_3 + (2+L_1)\nu)
\|\mathcal{G}(x^k)\|^{\rho+q-1}\cdot \|\mathcal{G}(x^k)\|\] provided
$x^k\in\mathbb{B}_{\epsilon_3}(\bar{x})$. Since $\rho+q-1>0$ and
$\|\mathcal{G}(x^k)\| \le(2+L_1)\,\mathrm{dist}(x^k;\mathcal{X}^*)$,
which comes from Proposition~\ref{bound_rxk}, this allows us to find
$0<\epsilon_0<\epsilon_3$ such that \begin{equation}\label{thm_eq3}
\|\mathcal{G}(\hat{x}^k)\|  \le\sigma\, \|\mathcal{G}(x^k)\|
\;\mbox{ for }\;x^k\in\mathbb{B}_{\epsilon_0}(\bar{x}).
\end{equation} Remembering that $C>F(x^0)\ge F_*$ and that $F$ is
continuous on the open domain $\mathrm{dom}\,F$, we select a
positive number $\epsilon_0$ to be so small that
\begin{equation}\label{thm_eq2}
\sup_{x\in\mathbb{B}_{\epsilon_0}(\bar{x})}F(x)\le C. \end{equation}
Let us next we introduce the positive constants \begin{equation*}
\tilde{\sigma}:=\sigma^{q}<1\;\mbox{ and
}\;\tilde{\epsilon}:=\disp\min\left\{\left(\frac{1-\tilde{\sigma}}{2c_1\kappa_2(2+L_1)^q}\epsilon_0\right)^{\frac{1}{q}},
\;\frac{1}{2}\epsilon_0, \;\frac{1}{1+c_1}\epsilon_0 \right\}
\end{equation*} and show that if
$x^{k_0}\in\mathbb{B}_{\tilde{\epsilon}}(\bar{x})$ with some $k_0\in
K_0$, then for any $k\ge k_0$ we have \begin{equation}\label{k0}
k+1\in K_0,\;t_k=1,\;x^{k+1}=\hat{x}^{k},\;\mbox{ and
}\;x^{k+1}\in\mathbb{B}_{\epsilon_0}(\bar{x}). \end{equation} To
verify \eqref{k0}, set first $k:=k_0$ and deduce from
$x^k\in\mathbb{B}_{\tilde{\epsilon}}(\bar{x})$ that
\begin{equation*}
\|\hat{x}^k-\bar{x}\|\le\|x^k-\bar{x}\|+\|d_k\|\le\|x^k-\bar{x}\|+c_1\,\mathrm{dist}(x^k;\mathcal{X}^*)\le(1+c_1)\|x^k-\bar{x}\|\le\epsilon_0,
\end{equation*} where the second inequality comes from
\eqref{Newton_thm2_dk}. It follows from \eqref{thm_eq3} and $k_0\in
K_0$ that \begin{equation*} \|\mathcal{G}(\hat{x}^k)\|  \le\sigma\,
\|\mathcal{G}(x^k)\|=\sigma\vartheta_{k}. \end{equation*} Observe
also that \eqref{thm_eq2} obviously yields $F(\hat{x}^{k})\le C$.
Then by Step~4 of Algorithm \ref{Newton} we get $k+1\in K_0$,
$t_k=1$, $x^{k+1}=\hat{x}^{k}$,
$\vartheta_{k+1}=\|\mathcal{G}(x^{k+1})\|$, and
$x^{k+1}\in\mathbb{B}_{\epsilon_0}(\bar{x})$. To justify further
\eqref{k0} for any $k>k_0$, proceed by induction and suppose that
for all $k-1\ge\ell\ge k_0$ we have \begin{equation*} \ell+1\in
K_0,\;t_\ell=1,\;x^{\ell+1}=\hat{x}^{\ell},\;x^{\ell+1}\in\mathbb{B}_{\epsilon_0}(\bar{x}),\;\mbox{and
hence }\;\;\vartheta_{\ell}= \|\mathcal{G}(x^{\ell})\|,\;
\|\mathcal{G}(x^{\ell+1})\| \le \sigma\,\|\mathcal{G}(x^{\ell})\|.
\end{equation*} This readily implies the estimates
\begin{equation}\label{thm_eq4} \begin{aligned}
\|\hat{x}^{k}-x^{k_0}\| &\le\sum_{\ell=k_0}^{k}\|d^{\ell}\| \le\sum_{\ell=k_0}^{k}c_1\kappa_2 \,\| \mathcal{G}(x^{\ell})\|^q \le\sum_{\ell=k_0}^{k}c_1\kappa_2\tilde{\sigma}^{\ell-k_0}\,\| \mathcal{G}(x^{k_0})\|^q \\
&\le\frac{c_1\kappa_2}{1-\tilde{\sigma}}\,
\|\mathcal{G}(x^{k_0})\|^q \le
\frac{c_1\kappa_2(2+L_1)^q}{1-\tilde{\sigma}}\|x^{k_0}-\bar{x}\|^q,
\end{aligned} \end{equation} where the second inequality follows
from \eqref{thm3_eq2} and \eqref{Newton_thm2_dk}, while the last
inequality is a consequence of Proposition~\ref{bound_rxk}.
Therefore, it gives us the conditions \[
\|\hat{x}^k-\bar{x}\|\le\|\hat{x}^k-x^{k_0}\|+\|x^{k_0}-\bar{x}\|\le\frac{c_1\kappa_2(2+L_1)^q}{1-\tilde{\sigma}}\tilde{\epsilon}^q
+ \tilde{\epsilon}\le\epsilon_0. \] Arguing as above, we get that
$\|\mathcal{G}(\hat{x}^k)\|  \le \sigma\vartheta_{k}$ and
$F(\hat{x}^{k})\le C$, which ensures that \eqref{k0} holds for any
$k\ge k_0$ and thus verifies these conditions in the general case.

Now we prove the claimed convergence $x^k\to\ox$ as $k\to\infty$
with the convergence rate \eqref{super}, where $\ox$ is the
designated limiting point $\ox$ of the sequence $\{x^k\}_{k\in
K_0}$. As shown above, for any $k\ge k_0$ we have $k+1 \in K_0$,
$t_k=1$, $x^{k+1}=\hat{x}^{k}$, and
$x^{k+1}\in\mathbb{B}_{\epsilon_0}(\bar{x})$. Using the conditions
in \eqref{k0} and the arguments similarly to to the proof of
\eqref{thm_eq4}, we are able to show that
\begin{equation}\label{thm1:seq_bounded} \|x^k-\bar{x}\|\le
\frac{c_1\kappa_2(2+L_1)^q}{1-\tilde{\sigma}}\|x^{\tilde{k}} -
\bar{x}\|^q +\| x^{\tilde{k}}-\bar{x}\|\quad \text{for any}\quad
k\ge\tilde{k}, \end{equation} whenever $\tilde{k}\ge k_0$. This
shows that the sequence $\{x^k\}$ is bounded. Picking any limiting
point $\tilde{x}$ of $\{x^k\}$ and passing to the limit as
$k\rightarrow\infty $ in \eqref{thm1:seq_bounded} lead us to the
estimate \begin{equation*}
\|\tilde{x}-\bar{x}\|\le\frac{c_1}{1-\tilde{\sigma}}\|x^{\tilde{k}}-\bar{x}\|+\|x^{\tilde{k}}-\bar{x}\|.
\end{equation*} Recalling that $\bar{x}$ is a limiting point of
$\{x^k\}_{k\in K_0}$, we pass to the limit as $\tilde{k} \rightarrow
\infty$ in the estimate above and get $\|\tilde{x}-\bar{x}\|=0$,
which implies that $\{x^k\}$ converges to $\bar{x}$. Finally,
employing \eqref{thm1:sup} gives us numbers $C_0,k_0>0$ such that
the claimed condition \eqref{super} holds. Then
$\|\mathcal{G}(x^{k+1})\|^q \le C_0^q
(\|\mathcal{G}(x^k)\|^q)^{\rho+q}$ for any $k\ge k_0$, which means
that $\|\mathcal{G}(x^k)\|^q$ converges $Q$-superlinearly to $0$.
Combined the latter with \eqref{thm3_eq2} implies that
$\mathrm{dist}(x^k;\mathcal{X}^*)$ converges $R$-superlinearly to
$0$. When the subgradient mapping $\nabla f(x)+\partial g(x)$ is
metrically subregular at $(\bar{x},0)$, we have the following
condition by setting $q = \rho = 1$ in \eqref{super}:
\begin{equation*} \|\mathcal{G}(x^{k+1})\| \le C_0
\|\mathcal{G}(x^k)\|^{2} \;\mbox{ whenever }\;k\ge k_0.
\end{equation*} Employing \eqref{thm3_eq2} and $\|\mathcal{G}(x^k)\|
\le (2+L_1)\,\mathrm{dist}(x^k;\mathcal{X}^*)$, which comes from
Proposition~\ref{bound_rxk}, gives us a positive number
$\tilde{C}_0$ such that the claimed condition \eqref{quad} holds.
This completes the proof of the theorem. \end{proof}

The concluding result of this section concerns the other kind of
metric $q$-subregularity of the subgradient mapping in
\eqref{prob_ori_1} in the case where $q>1$. As discussed in
Section~\ref{sec:2}, this type of higher-order metric subregularity
is rather new in the literature, and it has never been used in
applications to numerical optimization. The following theorem shows
that the higher-order subregularity assumption imposed on the
subgradient mapping $\partial F$ at the point in question allows us
to derive a  counterpart of Theorem~\ref{Newton_thm1} with
establishing the {\em convergence rate}, which may be {\em higher
than quadratic}. Indeed, Proposition~\ref{char_q}  characterizes the
metric $q$-subregularity of the subgradient mapping by an equivalent
$\frac{1+q}{q}$ growth condition for each $q>0$. Based on this
equivalence for $q > 1$, the imposed metric $q$-subregularity
implies a sharper growth behavior of $F$ around the solution point
in comparison with the quadratic growth. Consequently, the
convergence rate faster than the quadratic rate can be achieved.

\begin{theorem}\label{Newton_mqs} Let $\{x^k\}$ be the sequence
generated by Algorithm~{\rm\ref{Newton}} with
\revise{$\alpha_k=\min \left\{ \bar{\alpha}, c\|\mathcal{G}(x^k)\|^{\rho}\right\}$} as $\rho\in(0,1]$, and let
$\bar{x}\in \mathcal{X}^*$ be any limiting point of $\{x^k\}_{k\in
K_0}$, where $K_0$ is taken from \eqref{K}. In addition to
the standing assumptions, suppose that the mapping $\nabla
f(x)+\partial g(x)$ is metrically $q$-subregular at $(\bar{x},0)$
with $q>1$, that the Hessian $\nabla^2 f$ is locally Lipschitzian
around $\bar{x}$, that $\|B_k-\nabla^2f(x^k)\|=O(
\|\mathcal{G}(x^k)\|)$, and that $\varrho\ge q(1+\rho)-1$ in
\eqref{inexact_condition_1}. Then there exists $k_0$ such
that $t_k=1$ for all $k\ge k_0$ and that $\{x^k\}$
converges to the point $\bar{x}$ with the convergence rate
$q(1+\rho)$. The latter means that for some $k_0,C_0>0$ we have
\begin{equation}\label{quad+}
\mathrm{dist}(x^{k+1};\mathcal{X}^*)\le
C_0\,\mathrm{dist}(x^k;\mathcal{X}^*)^{q(1+\rho)}\;\mbox{ whenever
}\;k\ge k_0. \end{equation} \end{theorem}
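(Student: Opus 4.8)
The plan is to mirror the architecture of the proof of Theorem~\ref{Newton_thm1}, but to reroute every place where that argument invoked the H\"older error bound $\mathrm{dist}(x;\mathcal{X}^*)\le\kappa_2\|\mathcal{G}(x)\|^q$, which is available only for $q\le 1$ through Proposition~\ref{lem3}. The organizing observation is that metric $q$-subregularity with $q>1$ is, near $\bar x$, \emph{stronger} than ordinary metric subregularity: by Proposition~\ref{char_q} it is equivalent to the growth $F(x)\ge F^*+c\,\mathrm{dist}(x;\mathcal{X}^*)^{(1+q)/q}$ with exponent $(1+q)/q\in(1,2)$, which for $x$ close to $\bar x$ forces the quadratic growth $F(x)\ge F^*+c\,\mathrm{dist}(x;\mathcal{X}^*)^2$ and hence metric subregularity with $q=1$. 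Thus I obtain for free the two-sided comparison $\tfrac{1}{\kappa_2}\mathrm{dist}(x;\mathcal{X}^*)\le\|\mathcal{G}(x)\|\le(2+L_1)\mathrm{dist}(x;\mathcal{X}^*)$ near $\bar x$ from Propositions~\ref{bound_dist} and \ref{bound_rxk}, together with the perturbed solution map $\Sigma(p):=\{x\in\R^n\mid p\in\nabla f(x)+\partial g(x)\}$, for which $q$-subregularity yields $\Sigma(p)\cap\mathbb{B}_{\epsilon_1}(\bar x)\subset\mathcal{X}^*+\kappa_1\|p\|^q\mathbb{B}$. The point is to apply the $q$-th power \emph{only} to the residual, while the routine estimates use the $q=1$ comparison.

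First I would record the direction estimate by invoking Lemma~\ref{holder_d_k} with its order parameter set to $1$: since $\rho\in(0,1]$ and $\varrho\ge q(1+\rho)-1>\rho$, its hypotheses are met under the present metric subregularity, yielding a constant $c_1$ with $\|d^k\|\le c_1\,\mathrm{dist}(x^k;\mathcal{X}^*)$ for $x^k$ near $\bar x$. Next, following \eqref{Rk}, I set $\tilde{x}^k:=\hat{x}^k-e_k$ so that $\tilde x^k\in\Sigma(\mathcal{R}_k(\tilde x^k,x^k))$ with the residual $\mathcal{R}_k=\nabla f(\tilde x^k)-\nabla f(x^k)-H_k(\tilde x^k-x^k)+e_k-H_ke_k$. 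Using the local Lipschitz continuity of $\nabla^2 f$, the bound $\|B_k-\nabla^2 f(x^k)\|\le C_1\mathrm{dist}(x^k;\mathcal{X}^*)$, the estimates $\alpha_k\le c(2+L_1)^\rho\mathrm{dist}(x^k;\mathcal{X}^*)^\rho$ and $\|\tilde x^k-x^k\|\le\|d^k\|+\|e_k\|\le c_4\,\mathrm{dist}(x^k;\mathcal{X}^*)$, together with $\|e_k\|\le\nu\|\mathcal{G}(x^k)\|^{1+\varrho}\le\nu(2+L_1)^{1+\varrho}\mathrm{dist}(x^k;\mathcal{X}^*)^{1+\varrho}$ from Lemma~\ref{lem1}, I would collect the four contributions whose exponents in $\mathrm{dist}(x^k;\mathcal{X}^*)$ are $2,2,1+\rho,1+\varrho$. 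Since $\rho\le 1$ and $\varrho>\rho$, all of these exceed $1+\rho$, so the dominant one gives $\|\mathcal{R}_k\|\le C'\,\mathrm{dist}(x^k;\mathcal{X}^*)^{1+\rho}$ for $x^k$ near $\bar x$.

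Now I would feed this into the $q$-subregularity inclusion: since $\tilde x^k\in\Sigma(\mathcal{R}_k)$ lies near $\bar x$, it follows that $\mathrm{dist}(\tilde x^k;\mathcal{X}^*)\le\kappa_1\|\mathcal{R}_k\|^q\le\kappa_1(C')^q\,\mathrm{dist}(x^k;\mathcal{X}^*)^{q(1+\rho)}$. Transferring back to $\hat x^k=\tilde x^k+e_k$ and using $1+\varrho\ge q(1+\rho)$ to absorb $\|e_k\|$, I obtain $\mathrm{dist}(\hat x^k;\mathcal{X}^*)\le C_0\,\mathrm{dist}(x^k;\mathcal{X}^*)^{q(1+\rho)}$. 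The same residual bound, combined with $\|\mathcal{G}(x)\|\le\mathrm{dist}(0;\nabla f(x)+\partial g(x))$ from Proposition~\ref{bound_rxk0} and the Lipschitz continuity of $\mathcal{G}$, gives $\|\mathcal{G}(\hat x^k)\|\le C''\,\mathrm{dist}(x^k;\mathcal{X}^*)^{1+\rho}$; dividing by $\|\mathcal{G}(x^k)\|\ge\kappa_2^{-1}\mathrm{dist}(x^k;\mathcal{X}^*)$ shows $\|\mathcal{G}(\hat x^k)\|\le\sigma\|\mathcal{G}(x^k)\|$ once $x^k$ is close enough to $\bar x$.

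Finally, the unit-step and convergence claims follow by replicating the inductive argument of Theorem~\ref{Newton_thm1}. Choosing $\epsilon_0$ small enough that $\sup_{\mathbb{B}_{\epsilon_0}(\bar x)}F\le C$ (possible since $F(\bar x)=F^*\le C$ and $F$ is continuous, exactly as in the proof of Theorem~\ref{Newton_thm1}) and that the contraction above holds, I would take $k_0\in K_0$ with $x^{k_0}$ in a suitably small ball and prove by induction that for every $k\ge k_0$ one has $k+1\in K_0$, $t_k=1$, $x^{k+1}=\hat x^k$, and $x^{k+1}\in\mathbb{B}_{\epsilon_0}(\bar x)$; here the stay-in-the-ball step now uses $\|d^\ell\|\le c_1\mathrm{dist}(x^\ell;\mathcal{X}^*)\le c_1\kappa_2\|\mathcal{G}(x^\ell)\|$ together with the geometric decay $\|\mathcal{G}(x^\ell)\|\le\sigma^{\ell-k_0}\|\mathcal{G}(x^{k_0})\|$, so that $\sum_{\ell\ge k_0}\|d^\ell\|$ is bounded by a convergent geometric series. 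This yields boundedness of $\{x^k\}$, forces every limiting point to equal the designated $\bar x$, and hence $x^k\to\bar x$; the recursion $\mathrm{dist}(\hat x^k;\mathcal{X}^*)\le C_0\,\mathrm{dist}(x^k;\mathcal{X}^*)^{q(1+\rho)}$ with $x^{k+1}=\hat x^k$ is precisely \eqref{quad+}. I expect the main obstacle to be exactly the unavailability of the $\|\mathcal{G}\|^q$ error bound when $q>1$: the crux is to confine the use of the genuine $q$-subregularity to the single inequality $\mathrm{dist}(\tilde x^k;\mathcal{X}^*)\le\kappa_1\|\mathcal{R}_k\|^q$, and to verify that the exponent bookkeeping — raising $\mathrm{dist}^{1+\rho}$ to the power $q$ while demanding $1+\varrho\ge q(1+\rho)$ so that the inexactness term does not degrade the rate — delivers exactly $q(1+\rho)$.
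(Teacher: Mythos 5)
Your proposal is correct and follows essentially the same route as the paper's proof: bound $\|\mathcal{R}_k(\tilde x^k,x^k)\|$ by $O\big(\mathrm{dist}(x^k;\mathcal{X}^*)^{1+\rho}\big)$, apply the $q$-subregularity inclusion once to $\tilde x^k\in\Sigma(\mathcal{R}_k(\tilde x^k,x^k))$, absorb $e_k$ via $\varrho\ge q(1+\rho)-1$, and rerun the induction of Theorem~\ref{Newton_thm1}. The only notable difference is that you make explicit the reduction of metric $q$-subregularity with $q>1$ to ordinary metric subregularity near $\bar x$ (needed for the direction estimate and for \eqref{assump_Rk} with exponent $1+\rho$), a step the paper leaves implicit behind ``following the proof of Theorem~\ref{Newton_thm1}.''
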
 \begin{proof} It follows
from the imposed metric $q$-subregularity condition with a fixed
degree $q>1$ that \begin{equation}\label{thm1_eq_mqs}
\Sigma(p)\cap\mathbb{B}_{\epsilon_1}(\bar{x})\subset\mathcal{X}^*+\kappa_1\|p\|^q\mathbb{B}\;\mbox{
for some }\;\epsilon_1,\kappa_1>0 \end{equation} whenever $p\in\R^n$
is sufficiently close to the origin. Following the proof of
Theorem~\ref{Newton_thm1}, we arrive at the estimate of
$\|\mathcal{R}_k(\tilde{x}^k,x^k)\|$ in \eqref{assump_Rk} with some
constant $c_3>0$,  where $\tilde{x}^k:=\hat{x}^k-e_k$ while
$\mathcal{R}_k(\tilde{x}^k,x^k)$, $\hat x^k$, and $e_k$ are defined
and analyzed similarly to the case of Theorem~\ref{Newton_thm1}.
Then there exists $\epsilon_3 \in (0,1]$ such that
$\tilde{x}^k\in\mathbb{B}_{\epsilon_1}(\bar{x})$ when
$x^k\in\mathbb{B}_{\epsilon_3}(\bar{x})$. Since
$\tilde{x}^k\in\Sigma(\mathcal{R}_k(\tilde{x}^k\,x^k))$, we combine
this with \eqref{thm1_eq_mqs} and get the estimates \[
\mathrm{dist}(\tilde{x}^k;\mathcal{X}^*)\le\kappa_1\|\mathcal{R}_k(\tilde{x}^k
,x^k)\|^q\le \kappa_1 c_3^q \|\mathcal{G}(x^k)\|^{q(1+\rho)}
\le\kappa_1c_3^q(2+L_1)^q\mathrm{dist}(x^k;\mathcal{X}^*)^{q(1+\rho)}\;\mbox{
and} \] \begin{equation}\label{Newton_mqs:convergence_rate}
\begin{array}{ll}
\mathrm{dist}(\hat{x}^k;\mathcal{X}^*)&\le\mathrm{dist}(\tilde{x}^k;\mathcal{X}^*)+\|e_k\|\le \kappa_1c_3^q(2+L_1)^q\mathrm{dist}(x^k;\mathcal{X}^*)^{q(1+\rho)} + \nu\|\mathcal{G}(x^k)\|^{1+\varrho}\\
&\le(\kappa_1c_3^q(2+L_1)^q+\nu
(2+L_1)^{1+\varrho})\mathrm{dist}(x^k;\mathcal{X}^*)^{q(1+\rho)}\;\mbox{
whenever }\;x^k\in\mathbb{B}_{\epsilon_3}(\bar{x}). \end{array}
\end{equation} Employing the induction arguments as in the proof of
Theorem~\ref{Newton_thm1} yields the existence of a natural number
$k_0$ such that we have $k+1 \in K_0$, $t_k=1$,
$x^{k+1}=\hat{x}^{k}$, $x^{k+1}\in\mathbb{B}_{\epsilon_3}(\bar{x})$
when $k \ge k_0$, and that the sequence $\{x^k\}$ converges to
$\bar{x}$ as $k\to\infty$. Hence the second estimate in
\eqref{Newton_mqs:convergence_rate} gives a positive number $C_0$
and a natural number $k_0$, which ensure the fulfillment the claimed
convergence rate \eqref{quad+} and thus complete the proof.
\end{proof}\vspace*{-0.25in}

\section{Superlinear Local Convergence with Non-Lipschitzian Hessians}\label{sec:6}\vspace*{-0.05in}

As seen in Section \ref{sec:5}, the imposed local Lipschitz
continuity of the Hessian mapping $\nabla^2f$ plays a crucial role
in the justifications of the local convergence results obtained
therein. In this section we show that the latter assumption can be
dropped with preserving a local superlinear convergence of
Algorithm~\ref{Newton} for a rather broad class of loss functions
$f$ that naturally appear in many practical models arising in
machine learning and statistics, which includes, e.g., linear
regression, logistic regression, and Poisson
regression.\vspace*{0.03in}

The class of loss functions $f$ of our consideration in this section satisfies the following structural properties.

\begin{assumption}\label{assum_sc_h} The loss function $f\colon\mathbb{R}^n\to\oR$ of \eqref{prob_ori_1} is represented in the form
\begin{equation}\label{f-str}
f(x):=h(Ax)+\langle b,x\rangle,
\end{equation}
where $A$ is an $m\times n$ matrix, $b\in\R^n$, and $h\colon\mathbb{R}^m\to\oR$ is a proper, convex, and l.s.c.\ function such that:
\begin{itemize}
\item[{\bf(i)}] $h$ is strictly convex on any compact and convex subset of the domain $\dom h$.

\item[{\bf(ii)}] $h$ is continuously differentiable on the set $\dom h$, which is assumed to be open, and the gradient mapping $\nabla h$ is Lipschitz continuous
on any compact subset $\Omega\subset\dom h$.
\end{itemize}
\end{assumption}

Due to the strict convexity of $h$, the linear mapping $x\rightarrow Ax$ in \eqref{f-str} is invariant over the solution set $\mathcal{X}^*$ to \eqref{prob_ori_1}. This is the contents of the following
result taken from \cite[Lemma~2.1]{LT-1992}.

\begin{lemma}\label{invariantSC} Under the fulfillment of Assumption~{\rm\ref{assum_sc_h}} there exists $\bar{y}\in\mathbb{R}^m$ such that $Ax=\bar{y}$ for all $x\in\mathcal{X}^*$.
\end{lemma}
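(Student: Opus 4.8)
The plan is to exploit the convexity of the solution set together with the strict convexity of $h$ on compact convex subsets of $\dom h$. First I would recall that, since $F$ is convex under Assumption~\ref{assumptions_f}, its argmin set $\mathcal{X}^*$ is convex; hence it suffices to prove that $Ax_1 = Ax_2$ for any two points $x_1,x_2\in\mathcal{X}^*$, and then the common value $\bar{y}:=Ax_1$ is automatically uniform over $\mathcal{X}^*$.

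Fix such $x_1,x_2$ and set $\bar{x}:=\frac{1}{2}(x_1+x_2)$. Convexity of $\mathcal{X}^*$ gives $\bar{x}\in\mathcal{X}^*$, so that $F(\bar{x})=F^*=\frac{1}{2}(F(x_1)+F(x_2))$. I would then decompose $F=h\circ A+\langle b,\cdot\rangle+g$ and estimate each term along the segment joining $x_1$ and $x_2$. Linearity of $x\mapsto\langle b,x\rangle$ gives the exact midpoint identity, while convexity of $g$ yields $g(\bar{x})\le\frac{1}{2}(g(x_1)+g(x_2))$. For the composite term I would use $A\bar{x}=\frac{1}{2}(Ax_1+Ax_2)$ together with convexity of $h$ to get $h(A\bar{x})\le\frac{1}{2}(h(Ax_1)+h(Ax_2))$. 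Summing the three relations produces $F(\bar{x})\le F^*$, and since equality must in fact hold, each of the three inequalities is forced to be an equality — in particular the convexity inequality for $h$.

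The crux of the argument, and the step I expect to demand the most care, is upgrading the convexity estimate for $h$ to a strict inequality whenever $Ax_1\ne Ax_2$. Since $x_1,x_2$ are minimizers they lie in $\dom F\subset\dom f$, so the values $h(Ax_1),h(Ax_2)$ are finite and hence $Ax_1,Ax_2\in\dom h$; as $\dom h$ is convex, the segment $[Ax_1,Ax_2]$ is a compact convex subset of $\dom h$. Assumption~\ref{assum_sc_h}(i) then guarantees that $h$ is strictly convex on this segment, giving $h(A\bar{x})<\frac{1}{2}(h(Ax_1)+h(Ax_2))$ as soon as $Ax_1\ne Ax_2$. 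Combined with the two non-strict estimates for $g$ and $\langle b,\cdot\rangle$, this yields $F(\bar{x})<F^*$, contradicting $\bar{x}\in\mathcal{X}^*$. Therefore $Ax_1=Ax_2$, and the invariance of $Ax$ over $\mathcal{X}^*$ follows.
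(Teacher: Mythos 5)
Your proof is correct. The paper does not prove this lemma itself---it simply cites \cite[Lemma~2.1]{LT-1992}---and your argument (convexity of $\mathcal{X}^*$, the midpoint decomposition of $F$, and the strict-convexity contradiction for $h$ on the compact convex segment $[Ax_1,Ax_2]\subset\dom h$) is exactly the standard proof given in that reference, with all the small domain checks ($Ax_i\in\dom h$ via finiteness of $f$ on $\mathcal{X}^*$) handled properly.
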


The next lemma is a counterpart of Lemma~\ref{holder_d_k} without imposition the local Lipschitz continuity of the Hessian $\nabla^2f$. By furnishing a similar while somewhat different scheme in
comparison with Lemma~\ref{holder_d_k}, we establish new direction estimates of Algorithm~\ref{Newton} used in what follows. Note that we do not exploit in the lemma the structural conditions on $f$
listed in Assumption~\ref{assum_sc_h}.

\begin{lemma}\label{Newton_lem2} Let $\{x^k\}$ be the sequence generated by Algorithm~{\rm\ref{Newton}} with \revise{$\alpha_k=\min \left\{ \bar{\alpha}, c\|\mathcal{G}(x^k)\|^{\rho}\right\}$} and $\rho\in(0,1)$, and let $\bar{x}\in\mathcal{X}^*$
be any limiting point of $\{x^k\}$. In addition to Assumption~{\rm\ref{assumptions_f}} and \eqref{assum_B_bounded}, suppose that the Hessian mapping $\nabla^2 f$ is continuous at $\bar{x}\in\mathcal{X}^*$, that $\|B_k-\nabla^2 f(x^k)\|\to 0$ as $k\rightarrow\infty$, and that the subgradient mapping $\nabla f(x)+\partial g(x)$ is metrically subregular at $(\bar{x},0)$. Then given an arbitrary quantity $\delta>0$, there exist $\epsilon>0$ and $k_0\in\N$ such that for $d^k:=\hat x^k-x^k$ we have the estimates
\begin{equation}\label{dk2}
\alpha_k\|d^k\|\le\delta\,\mathrm{dist}(x^k;\mathcal{X}^*)\;\mbox{ and }\;\|d^k\|\le\delta\,\mathrm{dist}(x^k;\mathcal{X}^*)^{1-\rho}\;\mbox{ when }\;x^k\in \mathbb{B}_{\epsilon}(\bar{x})\;
\mbox{ and }\;k>k_0.
\end{equation}
\end{lemma}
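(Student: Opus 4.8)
The plan is to mirror the strong-monotonicity argument of Lemma~\ref{holder_d_k}, but to replace the two places where the local Lipschitz continuity of $\nabla^2 f$ was used by $o(1)$-estimates coming from mere continuity of the Hessian at $\ox$ and from the hypothesis $\|B_k-\nabla^2 f(x^k)\|\to 0$. First I would record that, since $\ox\in\mathcal{X}^*$ is a limiting point, the continuity of $\mathcal{G}$ together with $\mathcal{G}(\ox)=0$ forces $\mathcal{G}(x^k)\to 0$; hence for $x^k$ close to $\ox$ we have $\alpha_k=c\|\mathcal{G}(x^k)\|^{\rho}<\bar{\alpha}$ and $\alpha_k\to 0$. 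Writing $\pi_x^k$ for the projection of $x^k$ onto $\mathcal{X}^*$ and invoking Lemma~\ref{lem1}, I obtain $e_k$ with $e_k\in\nabla f(x^k)+H_k(\hat{x}^k-x^k)+\partial g(\hat{x}^k-e_k)$ and, near the solution where $\|\mathcal{G}(x^k)\|<1$, the sharper bound $\|e_k\|\le\nu\|\mathcal{G}(x^k)\|^{1+\varrho}$.

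The core step is the same monotonicity comparison as in Lemma~\ref{holder_d_k}: the mapping $\nabla f(x^k)+H_k(\cdot-x^k)+\partial g(\cdot)$ is strongly monotone with constant $\alpha_k$ because $H_k=B_k+\alpha_k I$ with $B_k\succeq 0$. Applying it to the pair $\pi_x^k$ (which, via $0\in\nabla f(\pi_x^k)+\partial g(\pi_x^k)$, realizes the residual $\nabla f(x^k)-\nabla f(\pi_x^k)+H_k(\pi_x^k-x^k)$) and $\tilde{x}^k:=\hat{x}^k-e_k$ (which realizes $e_k-H_k e_k$), and then using Cauchy--Schwarz, I get
\[
\alpha_k\|\pi_x^k-\tilde{x}^k\|\le\|\nabla f(x^k)-\nabla f(\pi_x^k)+H_k(\pi_x^k-x^k)-e_k+H_k e_k\|.
\]
I would then split the right-hand side, using $H_k=B_k+\alpha_k I$, into the linearization error $\|\nabla f(x^k)+\nabla^2 f(x^k)(\pi_x^k-x^k)-\nabla f(\pi_x^k)\|$, the approximation error $\|B_k-\nabla^2 f(x^k)\|\,\mathrm{dist}(x^k;\mathcal{X}^*)$, the term $\alpha_k\,\mathrm{dist}(x^k;\mathcal{X}^*)$, and $(1+M)\|e_k\|$.

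The decisive difference from Lemma~\ref{holder_d_k} lies in the first two terms. For the linearization error I would write $\nabla f(\pi_x^k)-\nabla f(x^k)=\int_0^1\nabla^2 f(x^k+t(\pi_x^k-x^k))(\pi_x^k-x^k)\,dt$ (valid on a small convex ball around $\ox$ lying in the open set $\dom f$), so that this error is bounded by $\sup_{t\in[0,1]}\|\nabla^2 f(x^k+t(\pi_x^k-x^k))-\nabla^2 f(x^k)\|\cdot\mathrm{dist}(x^k;\mathcal{X}^*)$; since $\pi_x^k\to\ox$ together with $x^k\to\ox$, the continuity of $\nabla^2 f$ at $\ox$ makes the supremum arbitrarily small. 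The second term is handled directly by $\|B_k-\nabla^2 f(x^k)\|\to 0$. Combined with $\alpha_k\to 0$ and $\|e_k\|\le\nu(2+L_1)^{1+\varrho}\mathrm{dist}(x^k;\mathcal{X}^*)^{1+\varrho}=o(\mathrm{dist}(x^k;\mathcal{X}^*))$ (via Proposition~\ref{bound_rxk}), these give, for every prescribed $\delta_0>0$, a radius $\epsilon$ and an index $k_0$ with $\alpha_k\|\pi_x^k-\tilde{x}^k\|\le\delta_0\,\mathrm{dist}(x^k;\mathcal{X}^*)$. Adding the residual terms from $\|d^k\|\le\|\pi_x^k-\tilde{x}^k\|+\mathrm{dist}(x^k;\mathcal{X}^*)+\|e_k\|$, whose $\alpha_k$-multiples are again $o(\mathrm{dist})$, upgrades this to $\alpha_k\|d^k\|\le\delta_0\,\mathrm{dist}(x^k;\mathcal{X}^*)$, which is the first estimate in \eqref{dk2}.

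Finally, to pass to the second estimate I would use the assumed metric subregularity ($q=1$): by Proposition~\ref{bound_dist} there exist $\kappa,\epsilon>0$ with $\mathrm{dist}(x^k;\mathcal{X}^*)\le(1+\kappa)(1+L_1)\|\mathcal{G}(x^k)\|$, whence $\alpha_k=c\|\mathcal{G}(x^k)\|^{\rho}\ge\bar{c}\,\mathrm{dist}(x^k;\mathcal{X}^*)^{\rho}$ with $\bar{c}:=c[(1+\kappa)(1+L_1)]^{-\rho}$. Dividing $\alpha_k\|d^k\|\le\delta_0\,\mathrm{dist}(x^k;\mathcal{X}^*)$ by $\alpha_k$ yields $\|d^k\|\le(\delta_0/\bar{c})\,\mathrm{dist}(x^k;\mathcal{X}^*)^{1-\rho}$, and choosing $\delta_0:=\delta\min\{1,\bar{c}\}$ makes both bounds in \eqref{dk2} hold simultaneously. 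The main obstacle is precisely the passage from Lipschitz to mere continuity of the Hessian: it forces every error term to be $o(\mathrm{dist})$ rather than $O(\mathrm{dist}^2)$, which is why the conclusion must take the ``for every $\delta$'' form, and why the lower bound $\alpha_k\ge\bar{c}\,\mathrm{dist}^{\rho}$, available only through metric subregularity with $q=1$, is indispensable for recovering the exponent $1-\rho$.
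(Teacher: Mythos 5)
Your argument is correct and follows essentially the same route as the paper's proof: the same strong-monotonicity comparison at $\pi_x^k$ and $\hat x^k-e_k$, the same decomposition of the residual with the linearization error controlled by mere continuity of $\nabla^2 f$ at $\ox$ (the paper uses the mean value theorem with an intermediate point $\xi^k$ where you use the integral form, an immaterial difference), and the same final step of invoking Proposition~\ref{bound_dist} to get $\alpha_k\ge c_1\,\mathrm{dist}(x^k;\mathcal{X}^*)^{\rho}$ and divide through to obtain the exponent $1-\rho$.
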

\begin{proof} Since $\hat{x}^k$ is an inexact solution to subproblem \eqref{subproblem} satisfying \eqref{inexact_condition_1}, we get by Lemma~\ref{lem1} that there exists $e_k$ for which both
conditions in \eqref{ek} hold. Taking the projection $\pi_x^k$ of $x^k$ onto the solution set ${\cal X}^*$ and arguing as in the proof of Lemma~\ref{holder_d_k} bring us to the inequality in
\eqref{mon2}, which together with the direction estimate in \eqref{dist-est2} ensures that
\begin{equation}\label{dir-est}
\begin{aligned}
\alpha_k\|d^k\|&\le\Big(\|\nabla f(x^k)+\nabla^2 f(x^k)(\pi_x^k-x^k)-\nabla f(\pi_x^k)\|+\|B_k-\nabla^2f(x^k)\|\mathrm{dist}(x^k;\mathcal{X}^*)\\
&\quad+2\alpha_k\,\mathrm{dist}(x^k;\mathcal{X}^*)+(1+M+\alpha_k)\nu\|\mathcal{G}(x^k)\|^{1+\varrho}\Big).
\end{aligned} \end{equation} It follows from the mean value theorem
and the choice of $\pi_x^k$ as the projection of $x^k$ onto ${\cal
X}^*$ that \begin{equation*} \begin{aligned}
\|\nabla f(x^k)+\nabla^2 f(x^k)(\pi_x^k-x^k)-\nabla f(\pi_x^k)\|&=\|(\nabla^2 f(x^k)-\nabla^2 f(\xi^k))(\pi_x^k-x^k)\|\\
&\le\|\nabla^2f(x^k)-\nabla^2f(\xi^k)\|\mathrm{dist}(x^k;\mathcal{X}^*),
\end{aligned} \end{equation*} where
$\xi^k:=\lambda^kx^k+(1-\lambda^k)\pi_x^k$ for some
$\lambda^k\in(0,1)$, and hence $\xi^k\to\bar{x}$ when
$x^k\to\bar{x}$ as $k\to\infty$. Then passing to the limit as
$k\to\infty$ and using the assumed continuity of $\nabla^2f$ at
$\ox$ show that $\|\nabla^2 f(x^k)- \nabla^2 f(\xi^k)\|\rightarrow
0$. Since \revise{$\alpha_k=\min \left\{ \bar{\alpha}, c\|\mathcal{G}(x^k)\|^{\rho}\right\} \rightarrow 0$} and
$\|\mathcal{G}(x^k)\|\le(2+L_1)\mathrm{dist}(x^k;\mathcal{X}^*)$ by
Proposition~\ref{bound_rxk}, and since $\|B_k-\nabla^2f(x^k)\|\to 0$
as $k\rightarrow\infty$, for any $\delta>0$ we find $\epsilon>0$ and
$k_0\in\N$ such that \[
\alpha_k\|d^k\|\le\delta\,\mathrm{dist}(x^k;\mathcal{X}^*)\;\mbox{
when }\;x^k\in\mathbb{B}_{\epsilon}(\bar{x})\;\mbox{ and }\;k>k_0,
\] which justifies the first estimate in \eqref{dk2}. To verify
finally the second one in \eqref{dk2}, employ
Proposition~\ref{bound_dist} and the above expression of $\alpha_k$
to find positive numbers $\epsilon_1$ and $c_1$ ensuring the
inequality \[ \alpha_k\ge
c_1\,\mathrm{dist}(x^k;\mathcal{X}^*)^{\rho}\;\mbox{ for all
}\;x\in\mathbb{B}_{\epsilon_1}(\bar{x}). \] Combining the latter
with the first estimate in \eqref{dk2} tells us that for the fixed
number $\dd>0$ there exist $\ve>0$ and $k>k_0$ such that the second
estimate in \eqref{dk2} is also satisfied, and thus the proof is
complete. \end{proof}

By the same arguments as in the proof of Lemma~\ref{K0}, we can also
show that the set $K_0$ defined in \eqref{K} is infinite in the
setting of Lemma \ref{Newton_lem2}. Now we are ready to derive the
promised result showing that the sequence of iterates, which are
generated by Algorithm~\ref{Newton} for the structured problem
\eqref{prob_ori_1} considered in this section, converges
superlinearly to a given optimal solution $\ox\in{\cal X}^*$ without
the local Lipschitz continuity assumption on the Hessian mapping
$\nabla^2f$.

\begin{theorem}\label{thm-nonlip} Let $\{x^k\}$ be the sequence of iterates generated by Algorithm~{\rm\ref{Newton}} with \revise{$\alpha_k=\min \left\{ \bar{\alpha}, c\|\mathcal{G}(x^k)\|^{\rho}\right\}$} and $\rho\in(0,1)$,
and let $\bar{x}\in\mathcal{X}^*$ be any limiting point of the sequence $\{x^k\}_{k\in K_0}$ with the set $K_0$ defined in \eqref{K}. Suppose in addition to the assumptions of
Lemma~{\rm\ref{Newton_lem2}} that the loss function $f$ is given in the structured form \eqref{f-str} under the fulfillment of Assumption~{\rm\ref{assum_sc_h}}, and that at each iteration step $k$
the matrix $B_k$ is represented in the form $B_k=A^TD_kA$, where $A$ is taken from \eqref{f-str} while $D_k\in\mathbb{R}^{m\times m}$ is some positive semidefinite matrix. Then there exists a natural
number $k_0$ such that $t_k=1$ for all $k\ge k_0$, and that the sequence $\{x^k\}$ converges to $\bar{x}$ with the superlinear convergence rate, i.e., there is $k_1$ for which we have
\begin{equation}\label{rate}
\mathrm{dist}(x^{k+1};\mathcal{X}^*)=o\big(\mathrm{dist}(x^k;\mathcal{X}^*)\big)\;\mbox{ whenever }\;k\ge k_1.
\end{equation}
\end{theorem}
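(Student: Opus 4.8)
The plan is to follow the template of the proof of Theorem~\ref{Newton_thm1}, but to replace the Lipschitz--Hessian estimate (which there produced the $\tfrac{L_2}{2}\|\cdot\|^2$ bound on the linearization residual) by an estimate that exploits the structure $f(x)=h(Ax)+\langle b,x\rangle$ together with $B_k=A^TD_kA$. First I would set $\tilde{x}^k:=\hat{x}^k-e_k$ and reproduce the inclusion $\mathcal{R}_k(\tilde{x}^k,x^k)\in\nabla f(\tilde{x}^k)+\partial g(\tilde{x}^k)$ from \eqref{Rk}, where $\mathcal{R}_k=\nabla f(\tilde{x}^k)-\nabla f(x^k)-H_k(\tilde{x}^k-x^k)+e_k-H_ke_k$. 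Metric subregularity of $\nabla f+\partial g$ at $(\bar{x},0)$ then gives $\mathrm{dist}(\tilde{x}^k;\mathcal{X}^*)\le\kappa\|\mathcal{R}_k\|$, so that, once unit steps are taken, $\mathrm{dist}(x^{k+1};\mathcal{X}^*)\le\mathrm{dist}(\hat{x}^k;\mathcal{X}^*)\le\kappa\|\mathcal{R}_k\|+\|e_k\|$. Since $\|e_k\|\le\nu\|\mathcal{G}(x^k)\|^{1+\varrho}=O\big(\mathrm{dist}(x^k;\mathcal{X}^*)^{1+\varrho}\big)=o\big(\mathrm{dist}(x^k;\mathcal{X}^*)\big)$, the target \eqref{rate} reduces to proving $\|\mathcal{R}_k\|=o\big(\mathrm{dist}(x^k;\mathcal{X}^*)\big)$. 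The ``unit step eventually, with $x^k\to\bar{x}$'' scaffolding (i.e. $t_k=1$ for $k\ge k_0$ and convergence of the whole sequence) is obtained exactly as in Theorem~\ref{Newton_thm1} by the same induction, using that $K_0$ is infinite (remarked after Lemma~\ref{Newton_lem2}) and the direction estimate \eqref{dk2}.

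Second, I would split $\mathcal{R}_k=S_k-\alpha_k(\tilde{x}^k-x^k)+(e_k-H_ke_k)$ with $S_k:=\nabla f(\tilde{x}^k)-\nabla f(x^k)-B_k(\tilde{x}^k-x^k)$. The two trailing terms are $o(\mathrm{dist})$: indeed $\alpha_k\|\tilde{x}^k-x^k\|\le\alpha_k\|d^k\|+\alpha_k\|e_k\|$ is $o\big(\mathrm{dist}(x^k;\mathcal{X}^*)\big)$ by the first estimate in \eqref{dk2} (valid for arbitrary $\delta$), while $\|e_k-H_ke_k\|\le(1+M+\alpha_k)\|e_k\|=O(\mathrm{dist}^{1+\varrho})$. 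The essential new step is the bound on $S_k$. Writing $u^k:=Ax^k$ and $\tilde{u}^k:=A\tilde{x}^k$ and using $\nabla f(x)=A^T\nabla h(Ax)+b$ together with $B_k=A^TD_kA$, I get $S_k=A^T\big[\nabla h(\tilde{u}^k)-\nabla h(u^k)-D_k(\tilde{u}^k-u^k)\big]$, so $S_k$ feels the iterates only through their $A$--images and lies in $\mathrm{rge}\,A^T$. Because $A^T$ annihilates $(\mathrm{rge}\,A)^\perp$ and $\tilde{u}^k-u^k=A(\tilde{x}^k-x^k)\in\mathrm{rge}\,A$, every matrix difference inside the brackets may be measured by its compression to $\mathrm{rge}\,A$, whose norm is controlled by $\|A^T(\cdot)A\|/\tilde{\sigma}_{\min}(A)^2$. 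Applying this to the splitting $\nabla h(\tilde{u}^k)-\nabla h(u^k)-D_k(\tilde{u}^k-u^k)=\big[\int_0^1\nabla^2 h(u^k+s(\tilde u^k-u^k))\,ds-\nabla^2 h(u^k)\big](\tilde u^k-u^k)+\big[\nabla^2 h(u^k)-D_k\big](\tilde u^k-u^k)$ and recalling $A^T\nabla^2 h(A\cdot)A=\nabla^2 f(\cdot)$, both compressions reduce to quantities of the form $\|\nabla^2 f(\cdot)-\nabla^2 f(x^k)\|$ and $\|\nabla^2 f(x^k)-B_k\|$, which tend to $0$ by the continuity of $\nabla^2 f$ at $\bar{x}$ and the hypothesis $\|B_k-\nabla^2 f(x^k)\|\to0$. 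Hence $\|S_k\|=o(1)\cdot\|A(\tilde{x}^k-x^k)\|$.

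Third --- and this is where the real work lies --- I must show $\|A(\tilde{x}^k-x^k)\|=O\big(\mathrm{dist}(x^k;\mathcal{X}^*)\big)$. By Lemma~\ref{invariantSC} we have $A\pi_x^k=\bar{y}$, so $\|A(\tilde{x}^k-x^k)\|\le\|A\tilde{x}^k-\bar{y}\|+\|\bar{y}-Ax^k\|$ with $\|\bar{y}-Ax^k\|=\|A(\pi_x^k-x^k)\|\le\|A\|\,\mathrm{dist}$; the difficulty is that $\|d^k\|$ is only $O(\mathrm{dist}^{1-\rho})$ (the large part of the step lives in $\ker A$, where the subproblem carries only the weak $\alpha_kI$ curvature), so $\|A\tilde{x}^k-\bar{y}\|=\|A(\tilde{x}^k-\pi_x^k)\|$ must be shown to be genuinely of order $\mathrm{dist}$. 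To this end I would compare the residualized inclusion $e_k-H_ke_k\in\nabla f(x^k)+H_k(\tilde{x}^k-x^k)+\partial g(\tilde{x}^k)$ with $-\nabla f(\pi_x^k)\in\partial g(\pi_x^k)$ and invoke monotonicity of $\partial g$, retaining the full $H_k$--form so that, with $p:=\tilde{x}^k-\pi_x^k$ and $Ap=A\tilde{x}^k-\bar{y}$, one reaches $\langle D_kAp,Ap\rangle+\alpha_k\|p\|^2\le o(\mathrm{dist})\,\|Ap\|+O(\mathrm{dist}^2)$ after bounding the cross terms by the same range--compression device and by $\|p\|=O(\mathrm{dist}^{1-\rho})$, $\alpha_k=O(\mathrm{dist}^\rho)$. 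If $\langle D_kAp,Ap\rangle\ge\mu_0\|Ap\|^2$ for some $\mu_0>0$, solving this quadratic inequality in $\|Ap\|$ yields precisely $\|A\tilde{x}^k-\bar{y}\|=O(\mathrm{dist})$, and combining the three steps gives $\|\mathcal{R}_k\|=o\big(\mathrm{dist}(x^k;\mathcal{X}^*)\big)$, hence \eqref{rate}.

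I expect the main obstacle to be exactly the positive lower curvature $\langle D_kAp,Ap\rangle\ge\mu_0\|Ap\|^2$ on $\mathrm{rge}\,A$. Since $D_k$ approaches $\nabla^2 h(\bar{y})$ only in the sandwiched sense $A^TD_kA\to\nabla^2 f(\bar{x})$, and since Assumption~\ref{assum_sc_h}(i) supplies only strict (not strong) convexity of $h$ --- which does not by itself force $\nabla^2 h(\bar{y})$ to be positive definite on $\mathrm{rge}\,A$ --- this bound cannot be read off directly. The delicate point is therefore to convert the strict convexity of $h$, together with the metric subregularity (equivalently the quadratic growth of $F$ furnished by Proposition~\ref{char_q}) and the invariance of $Ax$ over $\mathcal{X}^*$, into the curvature estimate that closes the bound on $\|A\tilde{x}^k-\bar{y}\|$; I anticipate that, to remain at the qualitative ``little-$o$'' level demanded by superlinearity, a contradiction/compactness argument along a convergent subsequence will be cleaner than chasing a uniform constant $\mu_0$.
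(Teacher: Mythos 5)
Your first two steps track the paper's proof: the same residual $\mathcal{R}_k(\tilde x^k,x^k)\in\nabla f(\tilde x^k)+\partial g(\tilde x^k)$, the same reduction via metric subregularity and $\|e_k\|=o(\mathrm{dist}(x^k;\mathcal{X}^*))$, and the same observation that, because $\nabla^2 f(\cdot)=A^T\nabla^2 h(A\cdot)A$ and $B_k=A^TD_kA$, the second-order linearization error sees the iterates only through $A$. But at the decisive third step you leave the road the paper takes and head into an argument that, as you yourself suspect, cannot be completed from the stated hypotheses: you try to prove $\|A(\tilde x^k-x^k)\|=O(\mathrm{dist}(x^k;\mathcal{X}^*))$ outright, which forces you to ask for a uniform curvature bound $\langle D_kAp,Ap\rangle\ge\mu_0\|Ap\|^2$ on $\mathrm{rge}\,A$. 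Assumption~\ref{assum_sc_h}(i) supplies only strict convexity of $h$ (think of $h(u)=u^4$ near the origin: strictly convex on every compact interval, yet $h''(0)=0$), $D_k$ is merely positive semidefinite, and the convergence $A^TD_kA\to\nabla^2 f(\bar x)$ gives no lower bound on the spectrum of $D_k$ restricted to $\mathrm{rge}\,A$; no compactness or contradiction argument can manufacture $\mu_0>0$ here. So this step would fail as formulated.

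The idea you are missing is that no bound on $\|A(\tilde x^k-x^k)\|$ in terms of $\mathrm{dist}(x^k;\mathcal{X}^*)$ alone is needed. Let $\tilde\pi_x^k$ and $\pi_x^k$ denote the projections of $\tilde x^k$ and $x^k$ onto $\mathcal{X}^*$. Lemma~\ref{invariantSC} gives $A\tilde\pi_x^k=A\pi_x^k=\bar y$, so every operator of the form $A^TEA$ annihilates $\tilde\pi_x^k-\pi_x^k$, and one may freely replace $\tilde x^k-x^k$ by $(\tilde x^k-\tilde\pi_x^k)+(\pi_x^k-x^k)$ inside both $\big(\nabla^2 f(\tilde\xi^k)-\nabla^2 f(x^k)\big)(\cdot)$ and $\big(B_k-\nabla^2 f(x^k)\big)(\cdot)$. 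Together with $\alpha_k\|d^k\|=o(\mathrm{dist}(x^k;\mathcal{X}^*))$ from Lemma~\ref{Newton_lem2}, this yields, for every $\delta>0$ and all large $k$ with $x^k$ near $\bar x$, the bound $\|\mathcal{R}_k(\tilde x^k,x^k)\|\le\delta\big(\mathrm{dist}(\tilde x^k;\mathcal{X}^*)+\mathrm{dist}(x^k;\mathcal{X}^*)\big)$, in which the distance of $\tilde x^k$ itself --- not a quantity you must control in advance --- appears on the right. Feeding this into the subregularity inequality $\mathrm{dist}(\tilde x^k;\mathcal{X}^*)\le\kappa_1\|\mathcal{R}_k(\tilde x^k,x^k)\|$ and absorbing the term $\kappa_1\delta\,\mathrm{dist}(\tilde x^k;\mathcal{X}^*)$ on the left (choose $\kappa_1\delta<1$) gives $\mathrm{dist}(\tilde x^k;\mathcal{X}^*)=o(\mathrm{dist}(x^k;\mathcal{X}^*))$ with no curvature information about $D_k$ or $\nabla^2 h$ whatsoever. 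The rest of your plan --- the treatment of $e_k$, and the unit-step induction patterned on Theorem~\ref{Newton_thm1} using the weaker direction estimate $\|d^k\|\le c_1\,\mathrm{dist}(x^k;\mathcal{X}^*)^{1-\rho}$ --- is sound and matches the paper.
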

\begin{proof} Proceeding similarly to the proof of Theorem~\ref{Newton_thm1}, at each iteration step $k$ we have the vector ${\cal R}_k(\tilde x^k,x^k)$ defined in \eqref{Rk} with
$\tilde{x}^k:=\hat{x}^k-e_k$, where $\hat{x}^k$ is an inexact solution of \eqref{subproblem} satisfying \eqref{inexact_condition_1}, and where $e_k$ is taken from \eqref{ek}. These relationships and the
mean value theorem applied to the gradient mapping $\nabla f$ on $[x^k,\tilde x^k]$ give us a vector $\tilde{\xi}^k:=\tilde{\lambda}^kx^k+(1-\tilde{\lambda}^k)\tilde{x}^k$ with some
$\tilde{\lambda}^k\in(0,1)$ such that
\begin{equation*}
\begin{aligned}
\|\mathcal{R}_k(\tilde{x}^k, x^k)\|&=\|\nabla f(\tilde{x}^k)-\nabla f(x^k)-H_k(\tilde{x}^k-x^k)+e_k-H_ke_k\|\\
&=\|\nabla f(\tilde{x}^k)-\nabla f(x^k)-(B_k+\alpha_kI)(\tilde{x}^k-x^k)+e_k-H_ke_k\|\\
&\le\|\nabla f(\tilde{x}^k)-\nabla f(x^k)-\nabla^2f(x^k)(\tilde{x}^k-x^k)\|+\|B_k-\nabla^2f(x^k)\|\cdot\|\tilde{x}^k-x^k\|\\
&\quad+\alpha_k\|\tilde{x}^k-x^k\|+(1+M)\|e_k\|\\
&\le\|(\nabla^2f(\tilde{\xi}^k)-\nabla^2 f(x^k))(\tilde{x}^k-x^k)\|+\|(B_k-\nabla^2f(x^k))(\tilde{x}^k-x^k)\|\\
&\quad+\alpha_k\|d^k\|+(1+M)\nu\|\mathcal{G}(x^k)\|^{1+\varrho}.
\end{aligned}
\end{equation*}
Let $\tilde{\pi}_x^k$ and $\pi_x^k$ be the projections of $\tilde{x}^k$ and $x^k$ onto $\mathcal{X}^*$, respectively. Then it follows from Lemma~\ref{invariantSC} that $A\tilde{\pi}_x^k=A\pi_x^k$.
By Assumption~\ref{assum_sc_h} we have $\nabla^2f(x)=A^T\nabla^2h(x)A$, and thus
\begin{equation*}
\big(\nabla^2f(\tilde{\xi}^k)-\nabla^2f(x^k)\big)(\tilde{x}^k-x^k)=\big(\nabla^2f(\tilde{\xi}^k)-\nabla^2f(x^k)\big)(\tilde{x}^k-\tilde{\pi}_x^k+\pi_x^k-x^k).
\end{equation*}
Using the assumed representation $B_k=A^TD_kA$ of the matrix $B_k$, we similarly get that
\begin{equation*}
\big(B_k-\nabla^2f(x^k)\big)(\tilde{x}^k-x^k)=\big(B_k-\nabla^2f(x^k)\big)(\tilde{x}^k-\tilde{\pi}_x^k+\pi_x^k-x^k).
\end{equation*}
Plugging the obtained expressions into the above estimate of $\|{\cal R}_k\|$ gives us
\begin{equation*}
\begin{aligned}
\|\mathcal{R}_k(\tilde{x}^k,x^k)\|
&\le\|(\nabla^2f(\tilde{\xi}^k)-\nabla^2f(x^k))(\tilde{x}^k-\tilde{\pi}_x^k+\pi_x^k-x^k)\|+\|(B_k-\nabla^2f(x^k))(\tilde{x}^k-\tilde{\pi}_x^k+\pi_x^k-x^k)\|\\
&\quad+\alpha_k\|d^k\|+(1+M)\nu\|\mathcal{G}(x^k)\|^{1+\varrho}\\
&\le\|\nabla^2f(\tilde{\xi}^k)-\nabla^2f(x^k)\|\big(\mathrm{dist}(\tilde{x}^k;\mathcal{X}^*)+\mathrm{dist}(x^k;\mathcal{X}^*)\big)\\
&\quad+\|B_k-\nabla^2f(x^k)\|\big(\mathrm{dist}(\tilde{x}^k;\mathcal{X}^*)+\mathrm{dist}(x^k;\mathcal{X}^*)\big)+\alpha_k\|d^k\|+(1+M)\nu(2+L_1)^{\varrho}\mathrm{dist}(x^k;\mathcal{X}^*)^{1+\varrho}.
\end{aligned} \end{equation*} It follows from the second estimate of
Lemma~\ref{Newton_lem2} that $\|d^k\|\rightarrow 0$ as
$k\rightarrow\infty$ and $x^k\rightarrow\bar{x}$. Since
$x^k\rightarrow\bar{x}$ implies that $\tilde{x}^k\rightarrow
\bar{x}$ as $k\rightarrow\infty$, the assumed continuity of
$\nabla^2f$ at $\ox$ and the above construction of $\tilde{\xi}^k$
tell us that $\|\nabla^2f(\tilde{\xi}^k)-\nabla^2f(x^k)\|\rightarrow
0$ as $k\rightarrow\infty$ and $x^k\rightarrow\bar{x}$. Now the
first estimate of Lemma~\ref{Newton_lem2} ensures that
$\alpha_k\|d^k\|=o(\mathrm{dist}(x^k;\mathcal{X}^*))$ as
$k\rightarrow\infty$ and $x^k\rightarrow\bar{x}$. Combining this
with $\|B_k-\nabla^2f(x^k)\|\rightarrow 0$ as $k\rightarrow\infty$
allows us to conclude that for any $\delta>0$ there exist a positive
number $\epsilon$ and a natural number $k_0$ such that
\begin{equation}\label{rk1}
\|\mathcal{R}_k(\tilde{x}^k,x^k)\|\le\delta\left(\mathrm{dist}(\tilde{x}^k;\mathcal{X}^*)+\mathrm{dist}(x^k;\mathcal{X}^*)\right)\;\mbox{
whenever }\;x^k\in\mathbb{B}_{\epsilon}(\bar{x})\; \mbox{ and
}\;k>k_0. \end{equation} It follows from the metric subregularity
assumption that we have inclusion \eqref{H_error_bound_eq} with $q =
1$ and the perturbed solution map $\Sigma(p)$ therein. Since
$\tilde{x}^k\in\Sigma(\mathcal{R}_k(\tilde{x}^k,x^k))$ as shown
above, there is $\kappa_1>0$ with \begin{equation*}
\mathrm{dist}(\tilde{x}^k;\mathcal{X}^*)\le\kappa_1\|\mathcal{R}_k(\tilde{x}^k,x^k)\|\le\kappa_1\delta\left(\mathrm{dist}(\tilde{x}^k;\mathcal{X}^*)+\mathrm{dist}(x^k;\mathcal{X}^*)\right)\;
\mbox{ for all }\;x^k\in\mathbb{B}_{\epsilon}(\bar{x})\;\mbox{ and
}\;k>k_0, \end{equation*} which implies that
$\mathrm{dist}(\tilde{x}^k;\mathcal{X}^*)=o(\mathrm{dist}(x^k;\mathcal{X}^*))$
as $k\to\infty$. Recalling the estimates \begin{equation*}
\mathrm{dist}(\hat{x}^k;\mathcal{X}^*)\le\mathrm{dist}(\tilde{x}^k;\mathcal{X}^*)+\|e_k\|\;\mbox{
and }\;\|e_k\|\le\nu\|\mathcal{G}(x^k)\|^{1+\varrho}\le\nu
(2+L_1)^{1+
\varrho}\mathrm{dist}(x^k;\mathcal{X}^*)^{1+\varrho}=o\big(\mathrm{dist}(x^k;\mathcal{X}^*)\big),
\end{equation*} we readily get, for all the numbers
$\delta,\ve,k_0,k$ taken from \eqref{rk1}, the conditions
\begin{equation}\label{Newton_thm2_eq4}
\mathrm{dist}(\hat{x}^k;\mathcal{X}^*)=o\big(\mathrm{dist}(x^k;\mathcal{X}^*)\big)\;\mbox{
and
}\;\mathrm{dist}(\hat{x}^k;\mathcal{X}^*)\le\delta\,\mathrm{dist}(x^k;\mathcal{X}^*),
\end{equation} which ensure therefore the existence of a positive
number $\ve_0$ and a natural number $k_0$  such that
\begin{equation}\label{Newton_thm2_eq3}
\mathrm{dist}(\hat{x}^k;\mathcal{X}^*)\le\frac{\sigma}{(2+L_1)\kappa_2}\mathrm{dist}(x^k;\mathcal{X}^*)\;\mbox{
whenever }\;x^k\in\mathbb{B}_{\epsilon_0}(\bar{x})\;\mbox{ and
}\;k>k_0. \end{equation} Employing Lemma~\ref{Newton_lem2}, suppose
without loss of generality that there exists $c_1>0$ with
\begin{equation}\label{thm2_eq5} \|d^k\|\le
c_1\,\mathrm{dist}(x^k;\mathcal{X}^*)^{1-\rho}\;\mbox{ for all
}\;\;x^k\in\mathbb{B}_{\epsilon_0}(\bar{x})\;\mbox{ and }\;k>k_0.
\end{equation} Since $C>F(x^0)\ge F^*$ in our algorithm, and since
$F$ is continuous on $\dom F$, let us pick $\epsilon_0>0$ to be so
small that condition \eqref{thm_eq2} holds. Defining the positive
numbers \begin{equation}\label{sigma}
\tilde{\sigma}:=\frac{\sigma}{(2+L_1)\kappa_2}<1\;\mbox{ and
}\;\tilde{\epsilon}:=\min\bigg\{\frac{\epsilon_0}{2},\Big(\frac{1-\tilde{\sigma}^{1-\rho}}{2
c_1}\epsilon_0\Big)^{\frac{1}{1-\rho}}\bigg\} \end{equation} and
invoking the set $K_0$ from \eqref{K}, we intend to show that if
$x^{k_1}\in\mathbb{B}_{\tilde{\epsilon}}(\bar{x})$ with $k_1>k_0$
and $k_1\in K_0$, then \begin{equation}\label{k1} k+1\in
K_0,\;t_k=1,\;x^{k+1}=\hat{x}^{k},\;\mbox{ and
}\;x^{k+1}\in\mathbb{B}_{\epsilon_0}(\bar{x})\;\mbox{ whenever
}\;k\ge k_1. \end{equation} To prove it by induction, observe first
that for $k:=k_1$ all the conditions in \eqref{k1} can be verified
similarly to the proof of \eqref{k0} in Theorem~\ref{Newton_thm1}
with the replacement of $k_0$ by $k_1$. Considering now the general
case where $k>k_1$ in \eqref{k1}, suppose that the latter holds for
any $k-1\ge\ell\ge k_1$, which clearly yields
$\mathrm{dist}(x^{\ell+1};\mathcal{X}^*)
\le\tilde{\sigma}\,\mathrm{dist}(x^{\ell};\mathcal{X}^*)$. Then the
above estimates and the parameter choice in \eqref{sigma} ensure
that \begin{equation}\label{thm2_eq3} \begin{aligned}
\|\hat{x}^k-x^{k_1}\|&\le\sum_{\ell=k_1}^{k}\|d^{\ell}\|\le\sum_{\ell=k_1}^{k}c_1\,\mathrm{dist}(x^{\ell};\mathcal{X}^*)^{1-\rho}\\
&\le\sum_{\ell=k_1}^{k}c_1\tilde{\sigma}^{(1-\rho)(\ell-k_1)}\mathrm{dist}(x^{k_1};\mathcal{X}^*)^{1-\rho}\le\frac{c_1}{1-\tilde{\sigma}^{1-\rho}}\mathrm{dist}(x^{k_1};\mathcal{X}^*)^{1-\rho}\\
&\le\frac{c_1}{1-\tilde{\sigma}^{1-\rho}}\|x^{k_1}-\bar{x}\|^{1-\rho},
\end{aligned} \end{equation} where the second inequality follows
from \eqref{thm2_eq5}. Thus by \eqref{sigma} and \eqref{thm2_eq3} we
have \[
\|\hat{x}^k-\bar{x}\|\le\|\hat{x}^k-x^{k_1}\|+\|x^{k_1}-\bar{x}\|\le\frac{c_1}{1-\tilde{\sigma}^{1-\rho}}\|x^{k_1}-\bar{x}\|^{1-\rho}+\|x^{k_1}-\bar{x}\|\le\epsilon_0,
\] which readily implies, similarly to the case where $k=k_1$, the
fulfillment of \eqref{k1} for any $k\ge k_1$. Moreover, remembering
that $\bar{x}$ is a limiting point of $\{x^k\}_{k\in K_0}$ and using
\eqref{k1} together with \eqref{thm2_eq3} allow us to check that for
any $\tilde{k}\ge k_1$ we have \[
\|x^k-\bar{x}\|\le\frac{c_1}{1-\tilde{\sigma}^{1-\rho}}\|x^{\tilde{k}}-\bar{x}\|^{1-\rho}+\|x^{\tilde{k}}-\bar{x}\|\;\mbox{
whenever }\;k>\tilde{k}. \] Further, let $\tilde x$ be any limiting
point of the original iterative sequence $\{x^k\}$. Then the passage
to the limit in the above inequality as $k\to\infty$ gives us \[
\|\tilde{x}-\bar{x}\|\le\frac{c_1}{1-\tilde{\sigma}^{1-\rho}}\|x^{\tilde{k}}-\bar{x}\|^{1-\rho}+\|x^{\tilde{k}}-\bar{x}\|\;\mbox{
for all }\;\tilde{k}\ge k_1. \] Passing finally the limit as $\tilde
k\to\infty$ in the latter inequality and recalling that $\bar{x}$ is
a limiting point of $\{x^k\}_{k \in K_0}$ tell us that
$\|\tilde{x}-\bar{x}\|=0$, which verifies therefore that $\{x^k\}$
converges to $\bar{x}$ as $k\to\infty$. The claimed estimate
\eqref{rate} of the convergence rate follows now from
\eqref{Newton_thm2_eq4}, and this completes the proof of the
theorem. \end{proof}

To conclude this section, observe that the standard choice of
$B_k=\nabla^2 f(x^k)$ in Algorithm~\ref{Newton} clearly implies that
the assumed representation $B_k=A^TD_kA$ and the condition
$\|B_k-\nabla^2 f(x^k)\|\rightarrow 0$ as $k\rightarrow\infty$ {\em
hold automatically} due to $\nabla^2 f(x^k)=A^T\nabla^2h(Ax^k)A$ and
the positive semidefiniteness of the Hessian $\nabla^2h(Ax^k)$ under
Assumption~\ref{assum_sc_h} on the loss function $f$ imposed
here.\vspace*{-0.15in}

\section{Numerical Experiments for Regularized Logistic Regression}\label{sec:7}\vspace*{-0.05in}

In the last section of the paper we conduct numerical experiments on
solving the $l_1$ regularized logistic regression problem to support
our theoretical results and compare them with the numerical
algorithm from \cite{yue2019family} applicable to this problem. All
of the numerical experiments are implemented on a laptop with
Intel(R) Core(TM) i7-9750H CPU@ 2.60GHz and 32.00 GB memory. All the
codes are written in MATLAB 2021a.

Supposing that we have some given training data pairs
$(a_i,b_i)\in\mathbb{R}^n\times\{-1,1\}$ as $i=1,\ldots,N$, the
optimization problem for $l_1$ regularized logistic regression is
defined by \begin{equation}\label{logistic_regression}
\min_{x}\frac{1}{N}\sum_{i=1}^{N}\log(1+\exp(-b_ix^Ta_i))+\lambda\|x\|_1,
\end{equation} where the regularization term $\|x\|_1$ promotes
sparse structures on solutions, and where $\lambda>0$ is the
regularization parameter balancing sparsity and fitting error.
Problem \eqref{logistic_regression} is a special case of
\eqref{prob_ori_1} with
$f(x):=\frac{1}{N}\sum_{i=1}^{N}\log(1+\exp(-b_ix^Ta_i))$ and
$g(x):=\lambda\|x\|_1$. In all the experiments, the matrix $B_k$ in
our proximal Newton-type Algorithm~\ref{Newton} is chosen as the
Hessian matrix of $f$ at the iterate $x^k$, i.e., $B_k:=\nabla^2
f(x^k)$. We set $\nu:=0.9$ and $\varrho:= \rho$ in the inexact
conditions \eqref{inexact_condition_1} for determining an inexact
solution $\hat{x}^k$ to subproblem~\eqref{subproblem}. We also set
$\theta:=0.1$, $\sigma:=0.5$, $\gamma:=0.5$, $C:=2F(x^0)$, $\bar{\alpha}:= 10^{-4}$ and $c :=
10^{-8}$, and then test the three values $\{0.1, 0.5, 1\}$ of
parameter $\rho$ in Algorithm~\ref{Newton}. As shown in
\cite[Theorem~8]{FBconvex}, the subgradient mapping $\nabla
f(x)+\partial g(x)$ is metrically subregular at $(\bar{x},0)$ for
any $\bar{x}\in\mathcal{X}^*$. It can be easily verified that all
the assumptions required in Theorem~\ref{Newton_thm1} are satisfied,
and hence the sequence of iterates generated by the proposed
algorithm for the tested problem \eqref{logistic_regression} locally
converges to the prescribed optimal solution with a
superlinear/quadratic convergence rate.

Here we test four real datasets ``colon-cancer", ``duke
breast-cancer", ``leukemia" and ``rcv1\_train.binary" downloaded
from the SVMLib repository \cite{libsvm}\footnote{
\url{http://www.csie.ntu.edu.tw/~cjlin/libsvmtools/datasets/}.}, and
their sizes are given in Table \ref{data_description}, where
nnz($A$) denotes the number of nonzero elements of the feature
matrix $A$. All of these tested real datasets have more columns than
rows, and hence the loss function $f$ in the corresponding problem
\eqref{logistic_regression} is not strongly convex. We use
$\mathtt{normr}$ in Matlab to normalize the rows of each dataset to
make them have unit norm. \begin{table} \centering \caption{Tested
datasets} \label{data_description} \begin{tabular}{l l l l l l l l l
} \hline Dataset & \hspace{15pt} & Data points(N) &\hspace{10pt} &
Features(n) & \hspace{10pt} & nnz($A$) & \hspace{10pt} &
Density($A$)(\%) \\ \Xhline{1.2pt} \rule{0pt}{10pt} colon-cancer & &
62 & & 2000 && 124000 && 100 \\  \rule{0pt}{4pt}
duke breast-cancer & & 44 && 7129 && 313676 && 100 \\
\rule{0pt}{4pt} leukemia && 38 &&  7129 && 270902 && 100 \\
\rule{0pt}{4pt}
rcv1\_train.binary && 20,242  && 47,236 && 1498952  && 0.16  \\
\hline \end{tabular} \end{table}

Since the IRPN algorithm proposed in \cite{yue2019family} does not
require $f$ in \eqref{logistic_regression} to be strongly convex and
problem \eqref{logistic_regression} satisfies all the assumptions
required by IRPN, we are going to compare our proposed proximal
Newton-type Algorithm~\ref{Newton} with IRPN. Note that the IRPN
code is collected from \url{https://github.com/ZiruiZhou/IRPN} with
some modifications to match the objective function in
\eqref{logistic_regression}. We set {$\theta=\beta:=0.25$},
$\zeta:=0.4$, $\eta:=0.5$ and $c := 10^{-6}$ in IRPN as suggested in
\cite{yue2019family}. Also we set {$\rho:= 0 $ and $0.5$}, since
these two specifications of IRPN perform best as shown in
\cite{yue2019family}. It should be noticed that in such setting both
our Algorithm~\ref{Newton} and IRPN require solving subproblem
\eqref{subproblem} at each iteration. This subproblem can be solved
by the coordinate gradient descent method, which is implemented in
MATLAB as a C source MEX-file.\footnote{The code is downloaded from
\url{https://github.com/ZiruiZhou/IRPN}.}

We also compare our proposed proximal Newton-type
Algorithm~\ref{Newton} with the proximal-Newton method PNOPT
(Proximal Newton OPTimizer) proposed in \cite{Lee2014}, although the
theoretical result of PNOPT in \cite{Lee2014} requires the strong
convexity of $f$ in \eqref{logistic_regression}. Note that the PNOPT
code is collected from
\url{http://web.stanford.edu/group/SOL/software/pnopt/} with
replacing the subproblem solver by the coordinate gradient descent
method mentioned \revise{above for a fair comparison}.

The initial points in all experiments are set to be a zero vector.
Each method is terminated at the iterate $x^k$ if the accuracy
$\mathtt{TOL}$ is reached by $\|\mathcal{G}(x^k)\|\le\mathtt{TOL}$
with the residual $\|\mathcal{G}(x^k)\|$ defined via the
prox-gradient mapping \eqref{prox-gra}. The maximum number of
iterations for the coordinate gradient descent method to solve the
corresponding subproblems is $10000$.

The achieved numerical results are presented in
Tables~\ref{table_colon}, \ref{table_leu}, \ref{table_duk}, and
\ref{table_rcv1}. 
\begin{table} 
	\centering \caption{Numerical comparison on
		colon-cancer dataset with $\lambda = 10^{-4}$ and  $\lambda =
		10^{-6}$} \label{table_colon} \resizebox{\textwidth}{!}{
		\begin{tabular}{l  l || r r r r r r || c  c  c  c  c c }
			\hline 
			& & \multicolumn{6}{c||}{$\lambda = 10^{-4}$} & \multicolumn{6}{c}{$\lambda = 10^{-6}$} \\ \cline{3-14} 
			$\mathtt{TOL}$ & Solver & PNOPT & IRPN & IRPN & Algorithm \ref{Newton} & Algorithm \ref{Newton}& Algorithm \ref{Newton} & PNOPT & IRPN & IRPN & Algorithm \ref{Newton} & Algorithm \ref{Newton} & Algorithm \ref{Newton} \\ 
			& & & $\rho = 0$ & $\rho = 0.5$ & $\rho = 0.1$ & $\rho = 0.5$ & $\rho = 1$ & & $\rho = 0$ & $\rho = 0.5$ & $\rho = 0.1$ & $\rho = 0.5$ & $\rho = 1$ \\ \Xhline{1.2pt} \rule{0pt}{10pt}& Outer iterations & 4 & 4 & 4 & 4 & 4 & 4 &5 & 5 & 5 & 5 & 5 & 5 \\ 
			$10^{-3}$  & Inner iterations & 22 & 8 & 61 & 8 & 48 & 210 & 10 & 11 & 13 & 10 & 10 & 889 \\ 
			& Time(s) & 0.01 & 0.01 & 0.02 & 0.01 & 0.02 & 0.08 & 0.00 & 0.01 & 0.01 & 0.01 & 0.01 & 0.32 \\ \hline 
			& Outer iterations & 5 & 7 & 5 & 7 & 5 & 5 &7 & 7 & 7 & 7 & 7 & 7 \\ 
			$10^{-4}$  & Inner iterations & 54 & 29 & 127 & 42 & 107 & 319 & 15 & 17 & 231 & 14 & 136 & 1894 \\ 
			& Time(s) & 0.02 & 0.01 & 0.04 & 0.03 & 0.05 & 0.12 & 0.01 & 0.01 & 0.08 & 0.01 & 0.05 & 0.66 \\ \hline 
			& Outer iterations & 7 & 12 & 6 & 9 & 6 & 6 &9 & 14 & 9 & 9 & 9 & 9 \\ 
			$10^{-5}$  & Inner iterations & 103 & 74 & 188 & 89 & 150 & 442 & 67 & 31 & 464 & 23 & 357 & 3164 \\ 
			& Time(s) & 0.04 & 0.03 & 0.06 & 0.05 & 0.06 & 0.16 & 0.02 & 0.02 & 0.15 & 0.01 & 0.12 & 1.07 \\ \hline 
			& Outer iterations & 8 & 24 & 7 & 11 & 7 & 7 &10 & 156 & 10 & 11 & 10 & 10 \\ 
			$10^{-6}$  & Inner iterations & 134 & 155 & 264 & 118 & 200 & 603 & 113 & 315 & 731 & 88 & 580 & 3680 \\ 
			& Time(s) & 0.05 & 0.06 & 0.09 & 0.06 & 0.08 & 0.22 & 0.04 & 0.16 & 0.23 & 0.03 & 0.19 & 1.25 \\ \hline 
			& Outer iterations & 9 & 43 & 7 & 12 & 7 & 7 &12 & 878 & 11 & 13 & 11 & 11 \\ 
			$10^{-7}$  & Inner iterations & 176 & 264 & 264 & 135 & 200 & 603 & 258 & 1759 & 1025 & 168 & 813 & 4226 \\ 
			& Time(s) & 0.07 & 0.10 & 0.09 & 0.06 & 0.08 & 0.22 & 0.08 & 0.92 & 0.33 & 0.06 & 0.27 & 1.42 \\ \hline 
			& Outer iterations & 10 & 66 & 8 & 13 & 8 & 8 &13 & 3137 & 13 & 18 & 12 & 12 \\ 
			$10^{-8}$  & Inner iterations & 229 & 379 & 410 & 153 & 334 & 895 & 294 & 6277 & 1526 & 401 & 1049 & 4770 \\ 
			& Time(s) & 0.09 & 0.15 & 0.14 & 0.07 & 0.14 & 0.33 & 0.09 & 3.22 & 0.49 & 0.13 & 0.34 & 1.59 \\ \hline \end{tabular}} \end{table}

\begin{table}
	\centering
	\caption{Numerical comparison on leukemia dataset with $\lambda =
		10^{-4}$ and $\lambda = 10^{-6}$} \label{table_leu}
	\resizebox{\textwidth}{!}{
		\begin{tabular}{l  l || r r r r r r || c  c  c  c  c c }
			\hline 
			& & \multicolumn{6}{c||}{$\lambda = 10^{-4}$} & \multicolumn{6}{c}{$\lambda = 10^{-6}$} \\ \cline{3-14} 
			$\mathtt{TOL}$ & Solver & PNOPT & IRPN & IRPN & Algorithm \ref{Newton} & Algorithm \ref{Newton}& Algorithm \ref{Newton} & PNOPT & IRPN & IRPN & Algorithm \ref{Newton} & Algorithm \ref{Newton} & Algorithm \ref{Newton} \\ 
			& & & $\rho = 0$ & $\rho = 0.5$ & $\rho = 0.1$ & $\rho = 0.5$ & $\rho = 1$ & & $\rho = 0$ & $\rho = 0.5$ & $\rho = 0.1$ & $\rho = 0.5$ & $\rho = 1$ \\ \Xhline{1.2pt} \rule{0pt}{10pt}& Outer iterations & 3 & 3 & 3 & 3 & 3 & 3 &5 & 5 & 5 & 5 & 5 & 5 \\ 
			$10^{-3}$  & Inner iterations & 8 & 8 & 22 & 6 & 14 & 89 & 10 & 12 & 10 & 10 & 10 & 561 \\ 
			& Time(s) & 0.01 & 0.01 & 0.02 & 0.01 & 0.01 & 0.07 & 0.01 & 0.02 & 0.02 & 0.01 & 0.01 & 0.40 \\ \hline 
			& Outer iterations & 5 & 5 & 5 & 6 & 5 & 5 &7 & 7 & 7 & 7 & 7 & 7 \\ 
			$10^{-4}$  & Inner iterations & 56 & 22 & 123 & 31 & 116 & 286 & 15 & 19 & 207 & 14 & 122 & 1298 \\ 
			& Time(s) & 0.04 & 0.02 & 0.09 & 0.03 & 0.09 & 0.21 & 0.02 & 0.03 & 0.18 & 0.02 & 0.09 & 0.93 \\ \hline 
			& Outer iterations & 6 & 12 & 6 & 8 & 6 & 6 &8 & 11 & 8 & 8 & 8 & 8 \\ 
			$10^{-5}$  & Inner iterations & 85 & 55 & 231 & 67 & 219 & 511 & 34 & 28 & 351 & 16 & 219 & 1833 \\ 
			& Time(s) & 0.06 & 0.05 & 0.17 & 0.05 & 0.16 & 0.37 & 0.03 & 0.04 & 0.29 & 0.02 & 0.16 & 1.30 \\ \hline 
			& Outer iterations & 8 & 59 & 7 & 10 & 7 & 7 &10 & 181 & 10 & 12 & 9 & 9 \\ 
			$10^{-6}$  & Inner iterations & 164 & 243 & 382 & 118 & 344 & 794 & 124 & 368 & 584 & 51 & 316 & 2088 \\ 
			& Time(s) & 0.12 & 0.24 & 0.28 & 0.09 & 0.24 & 0.56 & 0.10 & 0.47 & 0.48 & 0.05 & 0.23 & 1.48 \\ \hline 
			& Outer iterations & 8 & 100 & 7 & 11 & 7 & 7 &12 & 1353 & 11 & 16 & 11 & 11 \\ 
			$10^{-7}$  & Inner iterations & 164 & 406 & 382 & 162 & 344 & 794 & 226 & 2712 & 864 & 152 & 812 & 3236 \\ 
			& Time(s) & 0.12 & 0.40 & 0.28 & 0.12 & 0.24 & 0.56 & 0.19 & 3.36 & 0.70 & 0.13 & 0.57 & 2.28 \\ \hline 
			& Outer iterations & 9 & 138 & 8 & 13 & 8 & 8 &13 & 6869 & 14 & 27 & 12 & 12 \\ 
			$10^{-8}$  & Inner iterations & 225 & 558 & 572 & 242 & 523 & 1177 & 316 & 13744 & 1519 & 403 & 1090 & 3783 \\ 
			& Time(s) & 0.16 & 0.56 & 0.41 & 0.18 & 0.37 & 0.82 & 0.26 & 18.93 & 1.21 & 0.32 & 0.76 & 2.66 \\ \hline 
\end{tabular}} \end{table}

\begin{table}
	\centering \caption{Numerical comparison on duke
		breast-cancer dataset with $\lambda = 10^{-4}$ and $\lambda =
		10^{-6}$} \label{table_duk} \resizebox{\textwidth}{!}{
		\begin{tabular}{l  l || r r r r r r || c  c  c  c  c c } \hline 
			& & \multicolumn{6}{c||}{$\lambda = 10^{-4}$} & \multicolumn{6}{c}{$\lambda = 10^{-6}$} \\ \cline{3-14} 
			$\mathtt{TOL}$ & Solver & PNOPT & IRPN & IRPN & Algorithm \ref{Newton} & Algorithm \ref{Newton}& Algorithm \ref{Newton} & PNOPT & IRPN & IRPN & Algorithm \ref{Newton} & Algorithm \ref{Newton} & Algorithm \ref{Newton} \\ 
			& & & $\rho = 0$ & $\rho = 0.5$ & $\rho = 0.1$ & $\rho = 0.5$ & $\rho = 1$ & & $\rho = 0$ & $\rho = 0.5$ & $\rho = 0.1$ & $\rho = 0.5$ & $\rho = 1$ \\ \Xhline{1.2pt} \rule{0pt}{10pt}& Outer iterations & 3 & 3 & 3 & 3 & 3 & 3 &5 & 5 & 5 & 5 & 5 & 5 \\ 
			$10^{-3}$  & Inner iterations & 10 & 7 & 25 & 6 & 15 & 86 & 10 & 12 & 12 & 10 & 10 & 545 \\ 
			& Time(s) & 0.01 & 0.01 & 0.03 & 0.01 & 0.02 & 0.08 & 0.02 & 0.02 & 0.02 & 0.01 & 0.02 & 0.50 \\ \hline 
			& Outer iterations & 5 & 6 & 5 & 6 & 5 & 5 &7 & 7 & 7 & 7 & 7 & 7 \\ 
			$10^{-4}$  & Inner iterations & 46 & 20 & 106 & 21 & 85 & 199 & 17 & 20 & 240 & 14 & 133 & 1234 \\ 
			& Time(s) & 0.05 & 0.02 & 0.11 & 0.03 & 0.08 & 0.18 & 0.02 & 0.03 & 0.23 & 0.02 & 0.13 & 1.10 \\ \hline 
			& Outer iterations & 6 & 11 & 6 & 8 & 6 & 6 &8 & 13 & 8 & 8 & 8 & 8 \\ 
			$10^{-5}$  & Inner iterations & 68 & 41 & 136 & 45 & 110 & 279 & 38 & 32 & 289 & 19 & 217 & 1868 \\ 
			& Time(s) & 0.07 & 0.05 & 0.14 & 0.05 & 0.11 & 0.25 & 0.04 & 0.05 & 0.27 & 0.02 & 0.21 & 1.66 \\ \hline 
			& Outer iterations & 7 & 40 & 7 & 10 & 7 & 7 &10 & 163 & 10 & 10 & 9 & 9 \\ 
			$10^{-6}$  & Inner iterations & 87 & 148 & 202 & 86 & 162 & 411 & 71 & 332 & 439 & 51 & 307 & 2507 \\ 
			& Time(s) & 0.08 & 0.19 & 0.20 & 0.09 & 0.16 & 0.37 & 0.08 & 0.53 & 0.41 & 0.05 & 0.29 & 2.22 \\ \hline 
			& Outer iterations & 8 & 94 & 7 & 11 & 7 & 7 &12 & 1098 & 11 & 12 & 11 & 11 \\ 
			$10^{-7}$  & Inner iterations & 113 & 362 & 202 & 115 & 162 & 411 & 158 & 2202 & 532 & 85 & 649 & 3468 \\ 
			& Time(s) & 0.11 & 0.47 & 0.20 & 0.12 & 0.16 & 0.37 & 0.16 & 3.51 & 0.49 & 0.09 & 0.59 & 3.06 \\ \hline 
			& Outer iterations & 9 & 118 & 8 & 13 & 8 & 8 &13 & 4738 & 12 & 22 & 11 & 11 \\ 
			$10^{-8}$  & Inner iterations & 138 & 441 & 300 & 149 & 252 & 587 & 234 & 9482 & 755 & 284 & 649 & 3468 \\ 
			& Time(s) & 0.13 & 0.58 & 0.29 & 0.15 & 0.24 & 0.52 & 0.24 & 15.13 & 0.69 & 0.28 & 0.59 & 3.06 \\ \hline 
\end{tabular}} \end{table}

\begin{table}
	\centering
	\caption{Numerical comparison on rcv1\_train.binary dataset with $\lambda = 10^{-4}$ and $\lambda = 10^{-6}$} \label{table_rcv1}
	\resizebox{\textwidth}{!}{
		\begin{tabular}{l  l || r r r r r r || c  c  c  c  c c }
			\hline 
			& & \multicolumn{6}{c||}{$\lambda = 10^{-4}$} & \multicolumn{6}{c}{$\lambda = 10^{-6}$} \\ \cline{3-14} 
			$\mathtt{TOL}$ & Solver & PNOPT & IRPN & IRPN & Algorithm \ref{Newton} & Algorithm \ref{Newton}& Algorithm \ref{Newton} & PNOPT & IRPN & IRPN & Algorithm \ref{Newton} & Algorithm \ref{Newton} & Algorithm \ref{Newton} \\ 
			& & & $\rho = 0$ & $\rho = 0.5$ & $\rho = 0.1$ & $\rho = 0.5$ & $\rho = 1$ & & $\rho = 0$ & $\rho = 0.5$ & $\rho = 0.1$ & $\rho = 0.5$ & $\rho = 1$ \\ \Xhline{1.2pt} \rule{0pt}{10pt}& Outer iterations & 3 & 3 & 3 & 3 & 3 & 3 &4 & 4 & 4 & 4 & 4 & 4 \\ 
			$10^{-3}$  & Inner iterations & 13 & 10 & 30 & 7 & 23 & 96 & 13 & 12 & 69 & 8 & 45 & 511 \\ 
			& Time(s) & 0.12 & 0.12 & 0.24 & 0.08 & 0.19 & 0.68 & 0.13 & 0.16 & 0.55 & 0.09 & 0.37 & 3.77 \\ \hline 
			& Outer iterations & 5 & 5 & 4 & 5 & 4 & 4 &6 & 6 & 6 & 8 & 6 & 6 \\ 
			$10^{-4}$  & Inner iterations & 52 & 22 & 43 & 21 & 42 & 190 & 36 & 16 & 199 & 20 & 208 & 1851 \\ 
			& Time(s) & 0.39 & 0.23 & 0.35 & 0.19 & 0.33 & 1.33 & 0.31 & 0.21 & 1.50 & 0.22 & 1.52 & 13.09 \\ \hline 
			& Outer iterations & 6 & 8 & 5 & 7 & 5 & 5 &9 & 24 & 9 & 11 & 9 & 9 \\ 
			$10^{-5}$  & Inner iterations & 95 & 81 & 84 & 64 & 96 & 326 & 167 & 83 & 1146 & 107 & 1183 & 5515 \\ 
			& Time(s) & 0.69 & 0.74 & 0.64 & 0.51 & 0.71 & 2.30 & 1.24 & 0.90 & 8.13 & 0.85 & 8.31 & 38.39 \\ \hline 
			& Outer iterations & 7 & 10 & 6 & 9 & 6 & 6 &11 & 125 & 10 & 13 & 10 & 10 \\ 
			$10^{-6}$  & Inner iterations & 139 & 121 & 132 & 121 & 180 & 506 & 501 & 362 & 2003 & 326 & 1917 & 13041 \\ 
			& Time(s) & 1.01 & 1.07 & 0.99 & 0.91 & 1.30 & 3.58 & 3.58 & 4.05 & 14.04 & 2.38 & 13.43 & 90.34 \\ \hline 
			& Outer iterations & 8 & 12 & 7 & 11 & 7 & 7 &12 & 386 & 12 & 15 & 12 & 11 \\ 
			$10^{-7}$  & Inner iterations & 192 & 161 & 256 & 191 & 303 & 765 & 806 & 1115 & 4154 & 908 & 5777 & 23041 \\ 
			& Time(s) & 1.38 & 1.42 & 1.89 & 1.42 & 2.15 & 5.35 & 5.69 & 12.44 & 28.93 & 6.78 & 40.11 & 159.03 \\ \hline 
			& Outer iterations & 9 & 14 & 7 & 12 & 7 & 7 &13 & 998 & 12 & 16 & 12 & 12 \\ 
			$10^{-8}$  & Inner iterations & 236 & 201 & 256 & 223 & 303 & 765 & 1259 & 2503 & 4154 & 1338 & 5777 & 33041 \\ 
			& Time(s) & 1.69 & 1.76 & 1.89 & 1.66 & 2.15 & 5.35 & 8.86 & 29.48 & 28.93 & 9.82 & 40.11 & 228.39 \\ \hline 
\end{tabular}} \end{table}%
We employ the two values $\{10^{-4}, 10^{-6}\}$ of
penalty parameter $\lambda$ for each dataset and the six levels
$\{10^{-2},10^{-3},10^{-4},10^{-5},10^{-6},10^{-7}\}$ of accuracy
$\mathtt{TOL}$ in the algorithms, and report the number of outer and
inner iterations along with CPU time, where the inner iterations
denote the total number of coordinate descent cycles of the
coordinate gradient descent method during implementation. In all of
the tests, line search procedures of each tested method provide the
unit step size. This may be because the zero point is a good initial
point for all tested problems. It can be observed from the numerical
results that our proposed proximal Newton-type
Algorithm~\ref{Newton} with $\rho = 0.1$ achieves the desired
accuracy with the least total iteration number and time in many
tested problems. Though in some tested problems, PNOPT achieves the
desired accuracy with the least total iteration number and time,
Algorithm~\ref{Newton} with $\rho = 0.1$ is still comparable with
PNOPT. Although there is no theoretical guarantee for PNOPT in
problems \eqref{logistic_regression} in the absence of the strong
convexity assumption, this algorithm happens to be efficient in
practice. It can also be seen that Algorithm~\ref{Newton} with $\rho
= 0.5$ and $1$ always achieves the desired accuracy with the least
outer iteration number. This supports the convergence rate result of
Theorem~\ref{quad} telling us that with larger values of $\rho$
Algorithm~\ref{Newton} achieves a higher order of the convergence
rate. However, larger $\rho$ causes $\eta_k$ in the inexact
condition \eqref{inexact_condition_1} to decrease faster, which
makes the inexact condition \eqref{inexact_condition_1} more
restrictive, and it will take more inner iterations for solving the
subproblem at each outer iteration. Hence, in practice, for the
total computation time we need to trade off between the outer
iteration number and the inner iteration number for solving each
subproblem. For the $l_1$ regularized logistic regression problem
\eqref{logistic_regression} tested in this section, and when we use
the coordinate gradient descent method for solving the subproblem at
each iteration of Algorithm~\ref{Newton}, the value $\rho = 0.1$
seems to be a good choice for achieving an overall efficient
performance.

        {\bf Acknowledgements}. The authors are very grateful to
        two anonymous referees for their helpful suggestions and
        remarks that allowed us to significantly improve the
        original presentation. The alphabetical order of the authors
indicates their equal contributions to the paper.\vspace*{-0.1in}

\small

\end{document}